\newcommand{\diam}{\diamondsuit}
\newcommand{\h}{\heartsuit}
\newcommand{\bul}{\bullet}
\newcommand{\rar}{\rightarrow}
\newcommand{\hrar}{\hookrightarrow}
\newcommand{\Z}{\mathds{Z}}
\newcommand{\N}{\mathds{N}}
\newcommand{\I}{\mathcal{I}}
\newcommand{\nadrow}[1]{\stackrel{\text{#1}}{=}}
\newcommand{\nad}[2]{\stackrel{{#1}}{#2}}
\newcommand{\wh}[2]{\widehat{#2}^{#1}}
\newcommand{\what}[1]{\widehat{#1}}
\newcommand{\wt}[1]{\widetilde{#1}}
\def\ol{\overline}
\def\la{\langle}
\def\ra{\rangle}
\def\ssq{\subseteq}
\def\g{\gamma}
\def\lam{\lambda}
\def\ka{\kappa}
\def\mc{\mathcal}
\def\na{\twoheadrightarrow}
\newcommand{\diag}[1]{\xymatrix@C=4mm@R=5mm@M=-1mm@W=-1mm{#1}}
\newcommand{\podpis}[2]{{\mathop{#1}\limits_{\makebox[2ex][c]{$\scriptstyle\vphantom{+}#2$}}^{\,^{\,^{\,}}}}}
\begin{document}

\newcounter{licznik}[subsection]
\renewcommand{\thelicznik}{\thesubsection.\arabic{licznik}}
\newenvironment{TWIERDZENIE}[1]
    {\refstepcounter{licznik}\medskip\par\noindent\textbf{\thelicznik\hspace{0.6em}#1.}\hspace{1em}\ignorespaces}
    {\medskip}
\newenvironment{tw}    {\begin{TWIERDZENIE}{Theorem}}      {\end{TWIERDZENIE}}
\newenvironment{lem}   {\begin{TWIERDZENIE}{Lemma}}            {\end{TWIERDZENIE}}
\newenvironment{df}    {\begin{TWIERDZENIE}{Definition}}        {\end{TWIERDZENIE}}
\newenvironment{dfe}   {\begin{TWIERDZENIE}{Definitions}}        {\end{TWIERDZENIE}}
\newenvironment{wn}    {\begin{TWIERDZENIE}{Corollary}}          {\end{TWIERDZENIE}}
\newenvironment{ozn}   {\begin{TWIERDZENIE}{Notation}}       {\end{TWIERDZENIE}}
\newenvironment{ozna}  {\begin{TWIERDZENIE}{Notation}}       {\end{TWIERDZENIE}}
\newenvironment{stw}   {\begin{TWIERDZENIE}{Proposition}}     {\end{TWIERDZENIE}}
\newenvironment{fakt}  {\begin{TWIERDZENIE}{Fact}}             {\end{TWIERDZENIE}}
\newenvironment{uw}    {\begin{TWIERDZENIE}{Remark}}            {\end{TWIERDZENIE}}
\newenvironment{uwi}   {\begin{TWIERDZENIE}{Remarks}}            {\end{TWIERDZENIE}}
\newenvironment{prz}   {\begin{TWIERDZENIE}{Example}}         {\end{TWIERDZENIE}}
\newenvironment{przy}  {\begin{TWIERDZENIE}{Examples}}        {\end{TWIERDZENIE}}
\newenvironment{konstr}{\begin{TWIERDZENIE}{Construction}}      {\end{TWIERDZENIE}}

\title{Structure of Chinese algebras}
\author{Joanna Jaszu\'nska and Jan Okni\'nski}
\thanks{Work supported in part by a MNiSW
research grant N201 004 32/0088 (Poland)}

\begin{abstract}
The structure of the algebra $K[M]$ of the Chinese monoid $M$ over
a field $K$ is studied. The minimal prime ideals are described.
They are determined by certain  homogeneous congruences on $M$ and
they are in a one to one correspondence with diagrams of certain
special type. There are finitely many such ideals. It is also
shown that the prime radical $B(K[M])$ of $K[M]$ coincides with
the Jacobson radical and the monoid $M$ embeds into the algebra
$K[M]/B(K[M])$. A new representation of $M$ as a submonoid of the
product $B^{d}\times \Z^{e}$ for some natural numbers $d,e$, where
$B$ stands for the bicyclic monoid, is derived. Consequently, $M$
satisfies a nontrivial identity.
\end{abstract}

\maketitle

\noindent {\footnotesize{J. Jaszu\'nska: Center for Advanced
Studies, Warsaw University of Technology, pl. Politechniki 1,\\
00-661 Warsaw, Poland, \url{J.Jaszunska@mimuw.edu.pl}

\noindent J. Okni\'nski\footnote{Corresponding author.}: Institute
of Mathematics, Warsaw University, Banacha 2, 02-097 Warsaw,
Poland, \\
\url{okninski@mimuw.edu.pl}}}

\

\noindent {\footnotesize{Mathematics Subject Classification:
16S15, 16S36, 16N60, 20M05, 20M25

\noindent Key words: Chinese monoid,  monoid algebra, prime ideal,
homogeneous congruence, radical

}}

\

\tableofcontents

\newpage

\section*{Introduction}

For a positive integer $n$ we consider the monoid $M =\langle
a_{1},\ldots ,a_{n}\rangle$ defined by the relations
\begin{equation} \label{relations}
a_{j}a_{i}a_{k}=a_{j}a_{k}a_{i}= a_{k}a_{j}a_{i} \, \mbox{ for }
\,  i\leq k\leq j . \end{equation} It is called the Chinese monoid
of rank $n$. It is known that every element $x\in M$ has a unique
presentation of the form
\begin{equation}
\label{canon} x=b_{1}b_{2}\cdots b_{n},
\end{equation}
where
 \begin{eqnarray*}
b_{1} &=&a_{1}^{k_{11}} \\
  b_{2}&=&(a_{2}a_{1})^{k_{21}} a_{2}^{k_{22}}\\
  b_{3}&=&(a_{3}a_{1})^{k_{31}}(a_{3}a_{2})^{k_{32}}a_{3}^{k_{33}}\\
   &&  \cdots \\
 b_{n}&=&(a_{n}a_{1})^{k_{n1}}(a_{n}a_{2})^{k_{n2}}\cdots
 (a_{n}a_{n-1})^{k_{n(n-1)}}a_{n}^{k_{nn}},
\end{eqnarray*}
with all exponents nonnegative \cite{cass}. We call it the
canonical form of the element $x$. In particular, $M$ has
polynomial growth of degree $n(n+1)/2$, \cite{KL}.  The Chinese
monoid is related to the so called plactic monoid, introduced and
studied in \cite{las-lec, las-schut}. Both constructions are
strongly related to Young tableaux, and therefore to
representation theory and algebraic combinatorics. The latter
construction has already been established as a classical and
powerful tool of the respective theories \cite{fulton}. The
Chinese monoid appeared in the classification of classes of
monoids with the growth function coinciding with that of the
plactic monoid \cite{duchamp}. Combinatorial properties of $M$
were studied in detail in \cite{cass}. In case $n=2$, the Chinese
and the plactic monoids coincide. The monoid algebra $K[M]$ over a
field $K$ is the unital algebra defined by the algebra
presentation determined by relations (\ref{relations}). It is
called the Chinese algebra of rank $n$. If $n=2$, the structure of
$K[M]$ is described in \cite{jofc}. In particular, this algebra is
prime and semiprimitive, it is not noetherian and it does not
satisfy any polynomial identity.  For $n=3$, some information on
$K[M]$ was obtained in \cite{praca}. In particular the Jacobson
radical of $K[M]$ is nonzero but it is nilpotent and the prime
spectrum of $K[M]$ is pretty well understood. One of the
motivations for a study of the Chinese monoid is based on an
expectation that it can play a similar role as the plactic monoid
in several aspects of representation theory, quantum algebras, and
in algebraic combinatorics. Another motivation stems from
difficult open problems concerning the radical of finitely
presented algebras.

The results of this paper contribute to the general program of
studying finitely presented algebras defined by homogeneous
semigroup presentations. We say that an algebra $A$ with unity is
defined by homogeneous semigroup relations if it is given by a
presentation $A=\langle X \colon R \rangle $, where $X$ is a set
of free generators of a free algebra over $K$ and $R$ is a set of
relations of the form $u=w$, where $u,w$ are words of equal
lengths in the generators from $X$. In this case $A$ may be
identified with the semigroup algebra $K[S]$, where $S$ is the
monoid defined by the same presentation, \cite{JO}. Notice that
there is a natural length function on the underlying monoid $S$.
Certain important classes of such algebras, and of the underlying
monoids, have been recently considered, in particular see
\cite{jofc,gateva,f}. Clearly, the Chinese algebra $K[M]$ is of
this type. The plactic algebra is also defined by semigroup
relations of degree $3$. Algebras corresponding to the set
theoretic solutions of the Yang-Baxter equation are defined by
quadratic semigroup relations \cite{f}.

For certain important constructions of algebras defined by
homogeneous semigroup relations it was shown that the minimal
prime ideals have a very special form, which proved to have far
reaching consequences for the properties of the algebra,
\cite{binom,f}. One might expect that this is a more general
phenomenon occurring in this class of algebras. Our aim is to
consider problems of this type for the class of Chinese algebras.
We establish a remarkable form of minimal prime ideals of the
algebra $K[M]$ and derive several consequences.

By $J(K[S])$ and $B(K[S])$ we denote the Jacobson and the prime
radical of $K[S]$, respectively. If $\eta $ is a congruence on a
semigroup $S$ then $\I_\eta$ stands for the ideal of $K[S]$
spanned as a vector space over $K$ by the set $\{s-t \colon s,t\in
S, (s,t)\in \eta \}$. So $K[S]/\I_\eta=K[S/\eta ]$. If $\phi
:S\rightarrow T$ is a semigroup homomorphism, then by $ker (\phi)$
we mean the congruence on $S$ determined by $\phi$.

The paper is organized as follows. In Section~\ref{r-typy}, two
finite families of ideals of $K[M]$ are introduced, referred to as
ideals of $\h$ and $\diam$ type, respectively, and
 it is shown that every prime ideal $P$ of
$K[M]$ contains one of these ideals (Theorem~\ref{idealy}). Each
of these ideals is of the form $\I_\rho$ for a congruence $\rho $
on $M$. Moreover, each $\rho$ is a homogeneous congruence, which
means that $(s,t)\in \rho $ for $s,t\in M$ implies that $s,t$ have
equal length. The structure of the corresponding monoids
$M_{\rho}=M/\rho$ is described in Lemma~\ref{Mkier} and
Lemma~\ref{Mkaro}.

A much more involved construction allows us to continue this
process in Section~\ref{r-min-id-pier} by showing that every prime
$P$ contains an ideal of the form $\I_{\rho_2}$ for some
homogeneous congruence $\rho _{2}$ containing $\rho$. Proceeding
this way, we construct a finite tree $D$ whose vertices $d$
correspond to certain homogeneous congruences $\rho (d)$ on $M$
and such that $\rho (d)\subseteq \rho (d')$ if the vertex in $d\in
D$ is above the vertex $d'$ (Definition~\ref{drzewo} and
Construction~\ref{interpr}). Moreover, congruences corresponding
to vertices lying in different branches of the tree $D$ are
incomparable under inclusion. The main result of the paper,
Theorem~\ref{bijekcja}, asserts that $d\mapsto \I_{\rho (d)}$
determines a bijective correspondence between the set of leaves of
$D$ and the set of minimal prime ideals of $K[M]$. In order to
prove this, we first show that the ideal $\I_{\rho (d)}$
determined by a leaf $d$ of $D$ is a prime ideal
(Theorem~\ref{liscieidpier}) and also that every prime ideal of
$K[M]$ contains such an ideal $\I_{\rho (d)}$
(Theorem~\ref{idpierzawlisc}). In particular, every minimal prime
ideal $P$ of $K[M]$ contains exactly one of the ideals of $\h$ or
of $\diam$ type (which correspond to the first level of the tree
$D$), but some of the ideals of $\h$ and $\diam$ types can be
contained in several minimal prime ideals of $K[M]$. The proof
provides us with a procedure to construct every such ideal $P$. In
particular, every minimal prime $P$ has a remarkable form
$P=\I_{\rho_P}$, where $\rho_{P}$ is the congruence on $M$ defined
by $\rho_{P}=\{ (s,t) \in M\times M \colon s-t\in P \}$.
Consequently, $K[M]/P\simeq K[M/\rho_{P}]$, so $K[M]/P$ inherits
the natural $\Z$-gradation and therefore this algebra is again
defined by a homogeneous semigroup presentation.

In Section~\ref{r-zast} we derive some important consequences
of the main result. Our construction implies that every
$M/\rho_{P}$ is contained in a product $B^{i}\times \Z^{j}$ for
some $i,j$, where $B=\langle p,q \colon qp=1\rangle $ is the
bicyclic monoid. The latter plays an important role in ring theory
and in semigroup theory, \cite{CP,TYLam}. We show that $M$ embeds
into the product $\prod_{P}K[M]/P$, where $P$ runs over the set of
all minimal primes in $K[M]$. Hence $M$ embeds into some
$B^d\times \Z^e$. This entirely new representation of the Chinese
monoid $M$ implies in particular that $M$ satisfies certain
explicitly given semigroup identity. Since the leaves of $D$
correspond to diagrams of certain special type, one can enumerate
the minimal primes of $K[M]$. It turns out that their number is
equal to the so called $n$-th Tribonacci number. Moreover, the
description of minimal primes $P$ of $K[M]$ allows us to prove
that every $K[M]/P$ is semiprimitive. In particular, the prime
radical of $K[M]$ coincides with the Jacobson radical.

\section{Special types of ideals and congruences}
\label{r-typy}

In this section, two families of ideals of $K[M]$ are defined in
Part \ref{s-idealy}. They will play a crucial role in the approach
developed in the paper. Basic properties of these ideals are then
presented in Part \ref{s-postac}. Throughout, $M$ stands for a
Chinese monoid of rank $n \geq 3$.

\subsection{Ideals of $\h$ and $\diam$ type}
\label{s-idealy}

\

We start with describing certain relations that hold in $K[M]$.
\begin{tw}
\label{rown} The following equalities hold in the Chinese algebra
$K[M]$:
\begin{align}
\label{22}
 &(a_i a_j - a_j a_i)K[M](a_k a_l - a_l a_k)&= 0&  & \textup{for} & & i > j \geq k > l  \\
\label{23}
 &(a_i a_j - a_j a_i)K[M](a_{j+1} a_l - a_l a_{j+1})a_m&= 0&  & \textup{for}  & & i \geq j+1 > j \geq m > l \\
\label{32} a_m &(a_i a_j - a_j a_i)K[M](a_{j+1} a_l - a_l
a_{j+1})&= 0&  & \textup{for} & & i > m \geq j+1 > j \geq l
\end{align}
\end{tw}

\begin{proof}
We use the canonical form (\ref{canon}) of elements of $M$. To
shorten the notation, we write only $i$ instead of $a_i$. Also, we
write each exponent as $*$ if it may be equal 0 and as $+$ if it
is positive. Thus, the canonical form of an element $w\in M$ is
\[w= (1)^{*}\ (21)^{*}(2)^{*}\ (31)^{*}(32)^{*}(3)^{*} \ldots (n1)^{*}(n2)^{*}\ldots(n)^{*}\]
and the desired equalities may be written as
\begin{align*}
&(ij - ji)\;w\;(kl - lk) & =0&  \ & \textup{for} & \   & i > j \geq k > l \tag*{(\ref{22})} \\
&(ij-ji)\;w\;(kl-lk)m & =0& \ & \textup{for} & \  & i \geq k = j+1 > j \geq m > l \tag*{(\ref{23})} \\
m &(ij-ji)\;w\;(kl-lk) & =0& \ & \textup{for} & \  & i > m \geq k
= j+1 > j \geq l \tag*{(\ref{32})}
\end{align*}
Notice that all three equalities are of the form $\alpha w
\beta=0$. We proceed by induction on the length of $w$. If $w$ has
length $0$, so it is the unity of $M$, by using the defining
relations and bringing the involved elements of $M$ to the
canonical form we get
\begin{multline*}
(ij-ji)(kl-lk) = (ijk)l - (ijl)k - j(ikl) + j(ilk) = \\ = j(ikl) -
j(il)k - j(il)k + (jk)(il) = (jk)(il) - (jk)(il) - (jk)(il) +
(jk)(il) = 0,
\end{multline*}
and similarly
\begin{multline*}
(ij-ji)(kl-lk)m = (ijk)lm - (ijl)(km) - j(ikl)m + j(il)(km) = \\
= j(ikl)m - (il)j(km) - j(km)(il) + j(km)(il) = j(km)(il) -
j(km)(il) = 0,
\end{multline*}
\begin{multline*}
m(ij-ji)(kl-lk) = m(ijk)l-(mj)(ikl)-m(ijl)k+(mj)(ilk) = \\ =
mk(ijl) - k(mj)(il) - (mj)k(il) + k(mj)(il) = k(mj)(il)- k
(mj)(il) = 0.
\end{multline*}
So, assume the length of $w$ is positive and assume the equalities
hold for all $w'\in M$ shorter than $w$. The following
regularities hold in all three cases being considered.

If $y$ is the last letter of $w$ and $y\geq k$, then $y(kl-lk) =
k(yl) - k(yl) = 0$, which completes the proof. So assume that $y <
k$. Two possibilities can occur.

1) If $$w = (1)^{*}\ (21)^{*}(2)^{*}\ (31)^{*}(32)^{*}(3)^{*}
\ldots (y1)^{*}(y2)^{*} \ldots (y)^{+}$$ then all letters of the
word $w$ are smaller than $k$, so not greater then $j$, in
particular this applies to the first letter --- we  denote it by
$x$. Therefore, $(ij-ji)x = j(ix) - j(ix) = 0$, which proves the
assertion.

2) We now assume that for some $x>y$
$$w = (1)^{*}\ (21)^{*}(2)^{*}\ (31)^{*}(32)^{*}(3)^{*} \ldots (x1)^{*}(x2)^{*} \ldots (xy)^{+}.$$
If $x < k$ then again the first letter of $w$ is smaller than $k$,
so not greater than $j$. As above, we obtain $(ij-ji)x =
j(ix)-j(ix) = 0$, as desired. Hence, assume that $x \geq k$. We
know that $k>l$ and $k>y$. Two possibilities arise.

2a) $l \geq y$, so $x \geq k > l \geq y$. Let $w'$ be the initial
subword of the word $w$ such that $w=w'(xy)$. Then $w'$ is shorter
than $w$, so by the induction hypothesis $\alpha w' \beta = 0$.
Moreover, in all three equalities $\alpha w \beta=0$ considered
above, $xy$ commutes with $\beta$, because $xy$ commutes with all
letters of $z$ such that $x>z>y$. Thus we get
\[\alpha w \beta = \alpha (w'(xy)) \beta = (\alpha w' \beta) (xy) = 0 \cdot (xy) = 0,\]
which completes the proof in this case.

2b) $l < y$, so $x \geq k > y > l$. Then
\begin{multline*}
(xy)(kl-lk) = (xyk)l - (xyl)k = (kxy)l - (yxl)k = k(xyl)- y(xlk) = \\
= k(yxl) - y(kxl) = ky(xl) - yk(xl) = (ky-yk)(xl),
\end{multline*}
and $xl$ also commutes with all $m$ such that $x>m>l$. Therefore,
$(xy) \beta  = \beta'(xl)$, where $\beta'$ is of the same form as
$\beta$, but has an $y$ instead of the $l$.

Thus the following equality holds:
\[\alpha w \beta = \alpha w' (xy) \beta = \alpha w' \beta' (xl).\]
If $y<m$, the indices $i,j,k,y,m$ in $\alpha,\beta'$ satisfy the
inequalities mentioned in hypotheses of the theorem. Then $\alpha
w \beta = \alpha w' \beta' (xl) = 0$ holds by the induction
hypothesis, because $w'$ is shorter than $w$. This completes the
proof in this case.

If $y \geq m$ (this can occur in the case of equality (\ref{23})),
we have $\beta'=(ky-yk)m= y(km) - y(km) = 0$. Therefore in this
case the desired equality holds as well. This completes the proof.
\end{proof}

\begin{ozn}
\label{boxplus} Pairs of elements $\alpha,\beta \in K[M]$
satisfying $\alpha K[M] \beta = 0$ and of a form as in
Theorem~\ref{rown} will be called  \emph{pairs of type
$\boxplus$}. We denote by $\boxplus=\{(\alpha_{i},\beta_i) \colon
i \in I\}$ the set of all such pairs; $I$ is a finite set of
indices.
\end{ozn}

If $P$ is a prime ideal of $K[M]$, then for each of the equalities
$\alpha K[M] \beta = 0$ in Theorem \ref{rown} one of the elements
$\alpha$ or $\beta$ must belong to $P$. So $P$ must contain a set
of the form $\{\gamma_i \colon \forall_{i \in I}\,
(\gamma_i=\alpha_{i} \text{ or } \gamma_i=\beta_i)\}$. In this
manner we obtain a number of different sets $X_1, X_2, \ldots$. We
shall use indices $\gamma_{i,j}$ for the elements of the set
$X_j$. By $(X_j)$ we denote the ideal generated by the set $X_j$
in $K[M]$. So, let
\[P_j = (X_j) =  \sum_{\g_{i,j} \in X_j} K[M]\g_{i,j} K[M]
\triangleleft K[M].\]

Since every element $\g_i$ is of the form $l_i-p_i$ for some
$l_i,p_i\in M$, it follows that $P_j = \mathcal{I}_{\rho_j}$,
where $\rho_j$ is a congruence generated by the pairs $(l_i,p_i)$
for $i \in I$.

\begin{df}
\label{types-def} An \textbf{ideal of $\heartsuit$ type}, for $s =
2,3, \ldots, n-1$, is the ideal of $K[M]$ generated by the
elements:
\begin{align*}
\label{1BC} \tag{$\heartsuit$}
a_m a_i &- a_i a_m &  &{\textup{for \ }}& s & \leq m, i, \notag\\
a_l a_m &- a_m a_l &  &{\textup{for \ }}& l,m & \leq s. \notag
\end{align*}
Notice that, modulo such an ideal, the corresponding element $a_s$
is central.

An \textbf{ideal of $\diamondsuit$ type}, for $s = 2,3, \ldots,
n$, is the ideal generated  by the elements:
\begin{align*}
\label{1A} \tag{$\diamondsuit$}
a_m a_i &- a_i a_m, & a_i a_{s-1} a_m &- a_m a_{s-1} a_i & &{\textup{for \ }}& s & \leq m,i, \notag\\
a_l a_m &- a_m a_l, &  a_l a_s a_m &- a_m a_s a_l & &{\textup{for
\ }}& l,m & \leq s-1. \notag
\end{align*}
Notice that, modulo such an ideal, the corresponding element $a_s
a_{s-1}$ is central.

We say a congruence $\rho$ on $M$ is of $\heartsuit$ or
$\diamondsuit$ \emph{type} if the ideal $\I_\rho$ o $K[M]$
generated by $\rho$ is of $\heartsuit$ or $\diamondsuit$ type,
respectively. We write $M_\rho = M/\rho$ in this case.
\end{df}

If $\I$ is an ideal of a ring $R$ then $\ol{w}$ denotes the image
of the element $w\in R$ in $R/\I$. Sometimes, to simplify
notation, we shall write $w$ instead of $\ol{w}$ if from the
context it is clear that we mean the image in $R/\I$.

\begin{tw}
\label{idealy} Every prime ideal $P$ in $K[M]$  contains one of
the above mentioned $2n-3$ ideals $\I_\rho$ of $\h$ or $\diam$
type.
\end{tw}

\begin{proof}
If all elements $a_i$ in $K[M]$ commute modulo $P$, then $K[M]/P$
satisfies all equalities (\ref{22})-(\ref{32}) of Theorem
\ref{rown}. Hence $P$ contains all ideals of $\h$ and $\diam$
type.

Hence, we will assume that for some $u>v$ the element
$a_ua_v-a_va_u$ does not belong to the ideal $P$. Since $P$ is
prime, for each equality of the type $\alpha K[M] \beta = 0$
either $\alpha \in P$ or $\beta \in P$ holds. In particular, for
an equality of type (\ref{22}), all elements $a_ka_l-a_la_k$ for
$v\geq k>l$ and all $a_ia_j-a_ja_i$ for $i>j\geq u$ must belong to
$P$.

Thus, in $K[M]/P$, the elements $a_1,a_2,\ldots,a_v$ commute and
the  elements $a_u,a_{u+1},\ldots,a_n$ also commute. Let $s$ be
the smallest index greater than $v$ and such that in $K[M]/P$ the
element $a_s$  does not commute with an element $a_i$ for some
$i\in \{1,2,\ldots ,s-1\}$. Such an $s$ exists and $s \leq u$,
because for $s=u$ the elements $a_s=a_u$ and $a_v$ by assumption
do not commute in $K[M]/P$. Since $s$ is minimal, the elements
$a_1,a_2,\ldots,a_{s-1}$ commute in $K[M]/P$.

Since $a_s$ and $a_i$ do not commute in $K[M]/P$, in the
equalities of (\ref{22}) type, in which $\beta = a_ia_s-a_sa_i$,
we obtain $\alpha \in P$. Therefore, the elements
$a_s,a_{s+1},\ldots,a_n$ must commute in $K[M]/P$. Thus, we have
found such an $s > 1$ that the elements $a_1,a_2,\ldots,a_{s-1}$
commute in $K[M]/P$ and the elements $a_s,a_{s+1},\ldots,a_n$ also
commute in $K[M]/P$, so $P$ contains $\alpha$ or $\beta$ for each
equality of (\ref{22}) type.

The prime ideal $P$ must also contain $\alpha$ or $\beta$ for each
equality of (\ref{23}) and (\ref{32}) type. We know that $P$ does
not contain the element $a_ua_v-a_va_u$, so elements $a_u,a_v$ do
not commute in $K[M]/P$.

Assume that in $K[M]/P$ the element $a_{s-1}$ commutes with each
of $a_s,a_{s+1},\ldots,a_n$ (so, in view of the earlier
assumptions, $a_{s-1}$ is central in $K[M]/P$) or $a_s$ commutes
in $K[M]/P$ with all elements $a_1,a_2,\ldots,a_{s-1}$ (so,
similarly $a_{s}$ is central in $K[M]/P$). If in some equality
(\ref{22})-(\ref{32}) in one of the parentheses there is a
noncommuting pair (so their difference is not in $P$), then the
pairs in the other parentheses must commute in $K[M]/P$.
Therefore, if elements $a_1,a_2,\ldots,a_{s-1}$ commute in
$K[M]/P$, the elements $a_s,a_{s+1},\ldots,a_n$ commute in
$K[M]/P$ and one of the elements $a_{s-1}$, $a_s$ is central in
$K[M]/P$, then $P$ contains an element $\alpha$ or $\beta$ from each
equality of (\ref{22})-(\ref{32}) type. The properties described
above lead to the conclusion that $P$ contains some \textbf{ideal
of \ref{1BC} type}. There are $n-2$ such ideals (because $1<s<n$).

Assume now that in $K[M]/P$ an element $a_{s-1}$ does not commute
with an element $a_i$ for some $i\in \{s,s+1,\ldots ,n\}$ and
$a_s$ does not commute with an element $a_l$ for some $l\in \{
1,2,\ldots ,s-1\}$.

Consider an equality of (\ref{23}) type. The ideal $P$ is prime,
so it  must contain $\alpha$ or $\beta$ from that equality. This
condition is of course fulfilled, if $P$ contains the expression
from one of the parentheses. However, if $j=s-1$ (so $j+1=s$), by
our assumption, there exists an $i$ such that $a_i,a_j$ do not
commute in $K[M]/P$ and there exists an $l$ such that
$a_{j+1},a_l$ do not commute in $K[M]/P$. Therefore, both
expressions in the parentheses in our equality may not belong to
$P$. Then, if $P$ is to satisfy the above condition for such $i
\geq j+1 = s > s-1 = j \geq m > l$, it must contain $(a_{j+1} a_l
- a_l a_{j+1})a_m$. This means that in $K[M]/P$ the following
equality holds:
\begin{align}
\label{mprawa} a_s a_l a_m = a_l a_s a_m
\end{align}
for every $a_l$ not commuting with $a_{j+1}=a_s$ and every $m$
such that $l < m \leq s-1$. Notice that if $a_l,a_s$ commute in
$K[M]/P$, this equality is of course also satisfied. So, we may
rewrite condition (\ref{mprawa}) in a more general form (using the
relations in $M$):
\begin{align}
\label{mP} a_m a_s a_l = a_l a_s a_m \textup{ \ for $l,m$ such
that \ }  l, m  < s.
\end{align}
Similarly, for equalities of (\ref{32}) type we obtain
\begin{align}
\label{mL} a_m a_{s-1} a_i = a_i a_{s-1} a_m \textup{ \ for $i,m$
such that \ } s-1 < m, i.
\end{align}
Therefore, in this case $P$ must contain all elements $a_m a_s a_l
- a_l a_s a_m$ for $l,m$ such that $l, m  < s$ and $a_m a_{s-1}
a_i - a_i a_{s-1} a_m$ for $i,m$ such that $s-1 < m, i$, as well
as  the previously mentioned elements $a_ma_i-a_ia_m$ for $s \leq
m, i$ and $a_la_m-a_ma_l$ for $l,m \leq s-1$. This means that $P$
contains an \textbf{ideal of \ref{1A} type}. Notice that there are
$n-1$ such ideals (because $1<s \leq n$). Moreover, the element
$a_s a_{s-1}$ is central modulo such an ideal (so also modulo
$P$). Namely, by (\ref{mP}) for $m=s-1$, it commutes in $K[M]/P$
with $a_l$ for $l<s-1$, similarly by (\ref{mL}) for $m=s$, it
commutes in $K[M]/P$ with $a_i$ for $i>s$, an finally, by the
equalities in $M$, it commutes with $a_{s-1}, a_s$.

We have considered all the possible cases. The result follows.
\end{proof}

\subsection{The form of $M_\rho$ for $\rho$ of $\h$ type or of $\diam$ type}
\label{s-postac}

\

\begin{ozn}
\label{Mkreski} For a congruence $\rho $ of type $\h$ or $\diam$,
let $\psi$ be the natural homomorphism $M \rightarrow M_{\rho} =
M/\rho$. We also denote by $\psi$ its natural extension to a map
$K[M] \rightarrow K[M_{\rho}]$. The image of an element $x\in
K[M]$ under $\psi$ will be also denoted by $\ol{x}$. Since $\rho$
is a homogeneous congruence,  in $M_\rho$ we still have a natural
$\Z$-gradation given by the lengths of words, so we may consider
the degrees of elements of $M_\rho$.

For $\rho$ of $\h$ or $\diam$ type, we denote the homomorphism
$\psi$  by $\psi_\h$ or $\psi_\diam$, respectively.

If $\rho$ is of $\heartsuit$ type with the distinguished generator
$a_s$ then by  $M_{n-1}^s$ we denote the Chinese monoid with
generators $a_1,\ldots,a_{s-1},a_{s+1},\ldots,a_n$ and by
$\ol{M_{n-1}^s}$ its image under $\psi|_{M_{n-1}^s}$. Then it is
easy to see that
\[\ol{M_{n-1}^s} = \psi|_{M_{n-1}^s}
(M_{n-1}^s) = M_{n-1}^s \big/ \left(\rho|_{M_{n-1}^s}\right) =
\raisebox{1ex}{$M_{n-1}^s$} \Big/
\raisebox{-1ex}{$\left(\substack{a_1,\ldots,a_{s-1} \text{
commute} \\ a_{s+1},\ldots,a_n \text{ commute}}\right)$}.\]

If $\rho$ is of $\diamondsuit$ type with the distinguished
generators  $a_s, a_{s-1}$, by $M_{n-2}^{s-1,s}$ we denote the
Chinese monoid with generators
$a_1,\ldots,a_{s-2},a_{s+1},\ldots,a_n$ and by
$\ol{M_{n-2}^{s-1,s}}$ its image under $\psi|_{M_{n-2}^{s-1,s}}$.
Then, using Definition~\ref{types-def}, it is easy to see that
\[\ol{M_{n-2}^{s-1,s}} = \psi|_{M_{n-2}^{s-1,s}} (M_{n-2}^{s-1,s}) = M_{n-2}^{s-1,s}
\big/ \left(\rho|_{M_{n-2}^{s-1,s}}\right) =
\raisebox{1ex}{$M_{n-2}^{s-1,s}$}
\Big/ \raisebox{-1ex}{$\left(\substack{a_1,\ldots,a_{s-2} \text{ commute} \\
a_{s+1},\ldots,a_n \text{ commute}}\right)$}.\]
\end{ozn}

\begin{uw}
\label{ascentr} By Definition \ref{types-def}, we know that
$\ol{a_s}$ is central in $K[M_{\rho}]$ for the congruence $\rho$
of $\h$ type with  the distinguished generator $a_s$.
\end{uw}

\begin{lem}
  \label{Mkier}
If the congruence $\rho$ is of $\heartsuit$ type with the
distinguished generator $a_s$,  then the element $\ol{a_s}$ is
regular in $K[M_{\rho}]$. Moreover,
  $M_{\rho} \simeq \ol{M_{n-1}^s} \times \la \ol{a_s} \ra \simeq \ol{M_{n-1}^s} \times \N$.
\end{lem}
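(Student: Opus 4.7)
The plan is to construct a monoid isomorphism $M_\rho \simeq \ol{M_{n-1}^s}\times \N$ directly, and then deduce regularity of $\ol{a_s}$ by realizing $K[M_\rho]$ as a polynomial ring in one central variable.

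First I would define a map $\phi$ on generators by $\phi(a_i)=(\ol{a_i},0)$ for $i\neq s$ and $\phi(a_s)=(1,1)$, and extend it to a monoid homomorphism $\phi\colon M\to \ol{M_{n-1}^s}\times \N$. Well-definedness requires checking that the Chinese defining relations \eqref{relations} are preserved. When $s\notin\{i,j,k\}$, the relation takes place in the first factor and holds because $\ol{M_{n-1}^s}$ is a quotient of the Chinese monoid $M_{n-1}^s$. When $s\in\{i,j,k\}$, the image $(1,1)$ of $a_s$ is central in the codomain, so each of the three sides of the relation collapses to a product of the remaining generators; the commutations needed to match them up concern indices both $<s$ or both $>s$, and are precisely those built into $\rho|_{M_{n-1}^s}$. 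A short case check finishes this step.

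Second, $\phi$ vanishes on every generating pair of the $\h$ congruence $\rho$, so it descends to $\bar\phi\colon M_\rho\to \ol{M_{n-1}^s}\times \N$. In the other direction, Remark~\ref{ascentr} gives that $\ol{a_s}$ is central in $M_\rho$, and the inclusion $M_{n-1}^s\hookrightarrow M$ composed with $M\twoheadrightarrow M_\rho$ kills $\rho|_{M_{n-1}^s}$ by construction, yielding a map $\ol{M_{n-1}^s}\to M_\rho$. Combining these, centrality of $\ol{a_s}$ produces a well-defined homomorphism $\theta\colon \ol{M_{n-1}^s}\times \N \to M_\rho$, $(x,k)\mapsto x\cdot \ol{a_s}^k$. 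On generators $\bar\phi$ and $\theta$ are visibly mutually inverse, giving $M_\rho\simeq \ol{M_{n-1}^s}\times \N$; injectivity of $\theta$ then forces $\langle \ol{a_s}\rangle\simeq \N$.

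Finally, the isomorphism yields $K[M_\rho]\simeq K[\ol{M_{n-1}^s}]\otimes_K K[\N]\simeq K[\ol{M_{n-1}^s}][t]$, with $t$ corresponding to $\ol{a_s}$. Since the indeterminate in any polynomial ring is a non-zero-divisor and $\ol{a_s}$ is central, $\ol{a_s}$ is regular in $K[M_\rho]$. The main obstacle is the case analysis in the first step, verifying that $\phi$ is well-defined on the Chinese relations involving $a_s$; this is routine but requires care, as it relies on both centrality of $(1,1)$ in the codomain and the commutations supplied by $\rho|_{M_{n-1}^s}$.
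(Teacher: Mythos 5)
Your proof is correct and takes essentially the same approach as the paper: where you build $\phi$ and its inverse $\theta$, the paper writes every $w\in M_\rho$ as $w_0\cdot\ol{a_s}^k$ and recovers $w_0$ by imposing $\ol{a_s}=1$ (which realizes $\ol{M_{n-1}^s}$) and recovers $k$ by the $a_s$-degree — exactly the two components of your $\bar\phi$. Your packaging is slightly more explicit, since the case analysis you flag for well-definedness of $\phi$ on the Chinese relations is precisely the check the paper glosses over when asserting that the quotient of $M_\rho$ by $\ol{a_s}=1$ coincides with $\ol{M_{n-1}^s}$; in fact you have in effect re-derived the map $\widehat{\psi}_\h$ of Corollary~\ref{wnKier} along the way.
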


\begin{proof}
An easy degree argument shows that the element $\ol{a_s}$ is
non-zero in $K[M_{\rho}]$.

By Remark~\ref{Mkier}, $\ol{a_s}$ is central in $M_{\rho}$ and
every element $w \in M_{\rho}$ is of the form $w=w_0 \cdot
\ol{a_s}^k$, where $w_0 \in \ol{M_{n-1}^s}$ and $k \in \N$.
Therefore $M_{\rho} = \ol{M_{n-1}^s} \cdot \la \ol{a_s} \ra$.

We now introduce in $M_{\rho}$ a new relation $\ol{a_s}=1$. Then
the corresponding image of the whole $M_{\rho}$ coincides with
$\ol{M_{n-1}^s}$ and $w_0$ is the image of $w$. If $w=w_0 \cdot
\ol{a_s}^k$ is equal to some $w'=w'_0 \cdot \ol{a_s}^{k'}$ for
$w'_0 \in \ol{M_{n-1}^s}$, then their images after introducing the
relation $\ol{a_s}=1$ are also equal. Therefore $w_0=w'_0$.

Moreover, if the elements $w,w'$ are equal in $M_{\rho}$, then the
exponents, with  which $\ol{a_s}$ appears in them, must also be
equal (by a degree argument). Therefore the equality $k=k'$ also
holds. So the product $M_{\rho} = \ol{M_{n-1}^s} \cdot \la
\ol{a_s} \ra$ is direct:
\[M_{\rho} \simeq \ol{M_{n-1}^s} \times \la \ol{a_s} \ra \simeq \ol{M_{n-1}^s} \times \N.\]
In particular $\ol{a_s}$ is a regular element in $K[M_{\rho}]$.
\end{proof}

For the future convenience, we reformulate the above lemma,
introducing an additional notation.

\begin{wn}
\label{wnKier} Let $\what{\psi}_{\h} \colon M \rightarrow
\ol{M_{n-1}^s} \times \la \ol{a_s} \ra$ be the homomorphism
defined by:
\[\begin{cases}
\what{\psi}_{\h}(a_s) = (1,\ol{a_s}) \\
\what{\psi}_{\h}(a_i) = (\ol{a_i},1)  \ \ \textup{for} \ i \neq s.
\end{cases}\]
Let $\lambda \colon M_{\rho} \rar \ol{M_{n-1}^s} \times \la
\ol{a_s} \ra$ be the isomorphism  $w \mapsto (w_0, \ol{a_s}^k)$
resulting from the proof of Lemma \ref{Mkier}. Then
$\what{\psi}_{\h}$ is an epimorphism, $ker (\what{\psi}_{\h})= ker
(\psi_{\h})$  and the following diagram commutes:
\[
\xymatrix{
M \ar@{->>}[d]_{\psi_\h} \ar@{->>}[rd]^{\what{\psi}_{\h}} \\
M_{\rho}
\ar@{}[r]|(0.25){\mbox{\normalsize{$\simeq$}}}_(0.25)\lambda &
\ol{M_{n-1}^s} \times \la \ol{a_s} \ra \simeq \ol{M_{n-1}^s}
\times \N }
\]
\end{wn}

\begin{uw}
\label{asas-1centr} By Definition \ref{types-def}, we know that
the element $\ol{a_s} \ol{a_{s-1}}$ is central in $K[M_{\rho}]$
for the congruence $\rho$ of $\diam$ type with distinguished
generators $a_{s-1}, a_s$.
\end{uw}

\begin{lem}
\label{asas-1reg} The element $\ol{a_s} \ol{a_{s-1}}$ is regular
in $K[M_{\rho}]$, where  $\rho$ is of $\diamondsuit$ type with
distinguished generators  $a_{s-1}, a_s$.
\end{lem}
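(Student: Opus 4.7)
The plan is to mirror the proof of Lemma~\ref{Mkier}, using Remark~\ref{asas-1centr} (centrality of $\ol{a_s}\ol{a_{s-1}}$ in $K[M_\rho]$) in place of Remark~\ref{ascentr}. Non-vanishing of $\ol{a_s}\ol{a_{s-1}}$ in $K[M_\rho]$ is immediate: since the congruence $\rho$ is homogeneous, $K[M_\rho]$ inherits the length $\Z$-grading from $K[M]$, and $\ol{a_s}\ol{a_{s-1}}$ lies in the degree-$2$ component, hence is nonzero by a degree argument.

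Because $\ol{a_s}\ol{a_{s-1}}$ is central, left and right regularity coincide, and both follow from the cancellation statement that the monoid map $M_\rho\to M_\rho$, $\ol{w}\mapsto \ol{a_s a_{s-1}\,w}$, is injective (this then extends $K$-linearly since $M_\rho$ is a $K$-basis of $K[M_\rho]$). I would establish this by producing a numerical invariant $\nu\colon M_\rho\to\N$ of the $\rho$-class, satisfying $\nu(\ol{a_s a_{s-1}})=1$, such that left multiplication by $\ol{a_s a_{s-1}}$ raises $\nu$ by $1$ and such that two elements sharing the same $\nu$ and the same ``quotient modulo $\ol{a_s a_{s-1}}$'' are forced to coincide. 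A natural candidate arises from the canonical form~(\ref{canon}): in the block
\[ b_s=(a_sa_1)^{k_{s1}}\cdots(a_sa_{s-1})^{k_{s,s-1}}a_s^{k_{ss}}, \]
the exponent $k_{s,s-1}$ counts the adjacent $(a_sa_{s-1})$-pairs appearing in the standard normal form of $w$, and one would show that it descends to a well-defined function on $M_\rho$.

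The main obstacle is verifying this invariance. The diamond-type relations $a_la_sa_m=a_ma_sa_l$ for $l,m\leq s-1$ and $a_ia_{s-1}a_m=a_ma_{s-1}a_i$ for $i,m\geq s$ can rearrange $a_s$'s and $a_{s-1}$'s fairly freely among the surrounding letters, and a priori could merge or split the $(a_sa_{s-1})$-pairs counted by $k_{s,s-1}$ when one returns to the canonical form. One must check that they nevertheless preserve this exponent, together with the defining Chinese relations and the plain commutation relations from Definition~\ref{types-def}. This step is more delicate than the analogous one in Lemma~\ref{Mkier}, where the full direct-product decomposition $M_\rho\simeq\ol{M_{n-1}^s}\times\N$ made regularity of $\ol{a_s}$ essentially formal; here $\ol{a_s}$ and $\ol{a_{s-1}}$ individually fail to be central, so no such clean splitting is available, and the numerical invariant has to be controlled by hand on the generating relations of $\rho$. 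Once $\nu$ is shown to be well defined, cancellation is immediate: an equality $\ol{a_s a_{s-1}\,w}=\ol{a_s a_{s-1}\,w'}$ in $M_\rho$ has matching $\nu$-values, and the additional data of the canonical form outside the $(a_sa_{s-1})$-block then forces $\ol{w}=\ol{w'}$.
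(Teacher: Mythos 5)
Your overall strategy --- reduce to injectivity of $\ol{w}\mapsto\ol{a_sa_{s-1}}\,\ol{w}$ on $M_\rho$, and exhibit an exponent invariant that is bumped by exactly one under this map while a complete set of complementary data is preserved --- matches the paper's. But the concrete invariant you propose, the exponent $k_{s,s-1}$ in the canonical form (\ref{canon}) of the Chinese monoid $M$, does not descend to $M_\rho$: it is not constant on $\rho$-classes. Already for $n=s=3$, the generating pair $(a_1a_3a_2,\,a_2a_3a_1)$ of $\rho$ gives a counterexample. In $M$, the canonical form of $a_1a_3a_2$ is $a_1(a_3a_2)$, so $k_{32}=1$; the canonical form of $a_2a_3a_1$ is $a_2(a_3a_1)$ (since $a_2a_3a_1=a_3a_2a_1=a_3a_1a_2$ in $M$), so $k_{32}=0$. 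For the same reason, left multiplication by $a_sa_{s-1}$ does not even raise $k_{s,s-1}$ in $M$: $a_3a_2\cdot a_1=a_2a_3a_1$ still has $k_{32}=0$. So the step you flag as ``the main obstacle,'' verifying $\rho$-invariance, is not a delicate check to push through --- it is an outright failure of the proposed invariant.

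What the paper actually does is to abandon (\ref{canon}) and construct a new normal form $(\star)$ tailored to $M_\rho$, in which the $(a_sa_{s-1})$-type factors are first collected into a separate row using the extra commutativity supplied by $\rho$; uniqueness of $(\star)$ is then established by induction on $n$, imposing auxiliary identifications such as $\ol{a_1}=\ol{a_2}$ to drop the rank and comparing the resulting exponent systems. Only relative to that $\rho$-adapted form is the exponent of $(a_sa_{s-1})$ a well-defined invariant of $M_\rho$, and only there does left multiplication by $\ol{a_sa_{s-1}}$ raise it by one while fixing all remaining exponents. Without such a normal form for $M_\rho$, your concluding appeal to ``the additional data of the canonical form outside the $(a_sa_{s-1})$-block'' is also circular, since that data is not $\rho$-invariant either.
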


\begin{proof}
A degree argument easily implies that $\ol{a_s} \ol{a_{s-1}} \neq
0$. In view of Remark~\ref{asas-1centr}, it suffices to prove that
for any elements $x,y \in M_{\rho}$, from the equality $\ol{a_s}x
\ol{a_{s-1}}y=0$ it follows that $x=y$.

As in the proof of Theorem \ref{rown}, to simplify notation we
shall write $i$ instead of $\ol{a_i}$ and we shall write $*$
instead of the exponents (they may be equal $0$). Then, by
(\ref{canon}), we know that elements $w \in M$ have the canonical
form
\begin{align*}
 w = & (1)^* \\
  & (21)^* (2)^*\\
  & (31)^* (32)^* (3)^*\\
  &  \ldots \\
 & (n1)^* (n2)^* \ldots (n)^*.
\end{align*}
Relations in $M_{\rho}$ in particular imply that the generators
$1,2, \ldots, s-1$ commute. Therefore, the element $w \in
M_{\rho}$ may be written in the form
\begin{align*}
 w = & (1)^* (2)^* \ldots (s-1)^* \\
  & (s1)^* (s2)^* \ldots (s)^*\\
  & (s+1 \ 1)^* (s+1 \  2)^* \ldots (s+1)^*\\
  &  \ldots \\
 & (n1)^* (n2)^* \ldots (n)^*.
\end{align*}
A presentation in this form is not unique, because for example
$i(sj) = j(si)$ for $i,j<s$,  where the element $j$ commutes with
all $j'$ for $j' \leq s-1$ and the element $si$ commutes with all
$si'$ for $i'<s$. We can therefore perform all such possible
changes for $j<i$, coming to the following form of $w$:
\begin{align*}
 w = & (1)^* (2)^* \ldots (i)^* \\
  & (sj)^* (s \ j+1)^* \ldots (s)^*\\
  & (s+1 \ 1)^* (s+1 \  2)^* \ldots (s+1)^*\\
  &  \ldots \\
 & (n1)^* (n2)^* \ldots (n)^*,
\end{align*}
where the first or the second row (or both) may disappear or in
the second row only  $(s)^*$ may be left. If in both of these rows
some elements with non-zero exponents are left (other than
$(s)^*$), then $w$ may be written in the above form with $i,j$
satisfying the condition  $i \leq j \leq s-1$ and the exponents of
$(i)$ and of $(sj)$ are positive.

By the commutativity of elements $s,s+1, \ldots,n$, each segment
of the form $(t1)^* (t2)^* \ldots (t)^*$ for $t>s$, can be
replaced by a segment of the form $(t1)^* (t2)^* \ldots (t \
s-1)^* (s)^* (s+1)^* \ldots (t)^*$. Moreover, $(s)^* (s+1)^*
\ldots (t)^*$ commutes with every product $(t'1)^* (t'2)^* \ldots
(t' \ s-1)^*$ for any $t'>t$, and $(s)^*$ commutes with the
product $(s+1 \ 1)^* (s+1 \ 2)^* \ldots (s+1 \ s-1)^*$. Hence, $w$
can be rewritten as
\begin{align*}
 w = & (1)^* (2)^* \ldots (i)^* \\
  & (sj)^* (s \ j+1)^* \ldots (s \ s-1)^*\\
  & (s+1 \ 1)^* (s+1 \  2)^* \ldots (s+1 \ s-1)^*\\
  &  \ldots \\
 & (n1)^* (n2)^* \ldots (n \ s-1)^*\\
 & (s)^*(s+1)^* \ldots (n)^*,
\end{align*}
where the first or second row (or both) may disappear. If in both
of these rows some elements with non-zero exponents are left, then
 $w$ may be written in the above form with $i,j$ satisfying the
condition  $i \leq j \leq s-1$ and the exponents of $(i)$ and of
$(sj)$ are positive.

Each element of the form $(t \ s-1)^*$, for $t \geq s$, commutes
with all elements of the form $(t' \ t'')^*$ for $t' > t$ and $t''
\leq s-1$. Thus, we obtain another form of $w$:
\begin{align*}
 w = & (1)^* (2)^* \ldots (i)^* \\
  & (sj)^* (s \ j+1)^* \ldots (s \ s-2)^*\\
  & (s+1 \ 1)^* (s+1 \  2)^* \ldots (s+1 \ s-2)^*\\
  &  \ldots \\
 & (n1)^* (n2)^* \ldots (n \ s-2)^*\\
 & (s \ s-1)^* (s+1 \ s-1)^* \ldots (n \ s-1)^*\\
 & (s)^*(s+1)^* \ldots (n)^*,
\end{align*}
where the first or second row (or both) may disappear. If in both
of these rows some elements with non-zero exponents are left, then
$w$ may be written in the above form with $i,j$ satisfying the
condition  $i \leq j \leq s-2$ and the exponents of $(i)$ and of
$(sj)$ are positive. If some elements remain only in the first
row, then in the above form we have $0< i \leq s-1$, if only in
the second row, then $0< j \leq s-2$.

Note that this form is not unique, for example  $(k \ s-1)l =(l \
s-1) k$ for $k,l>s-1$, where the element $(l \ s-1)$ commutes with
all $(l' \ s-1)$ for $l' > s-1$ and the element $k$ commutes with
all $k' \geq s$. Thus we can perform all such possible changes for
$k>l$, coming to the following form of $w$:
\begin{align*}
\label{postac} \tag{$\star$}
 w = & (1)^* (2)^* \ldots (i)^* \\
  & (sj)^* (s \ j+1)^* \ldots (s \ s-2)^*\\
  & (s+1 \ 1)^* (s+1 \  2)^* \ldots (s+1 \ s-2)^*\\
  &  \ldots \\
 & (n1)^* (n2)^* \ldots (n \ s-2)^*\\
 & (s \ s-1)^* (s+1 \ s-1)^* \ldots (k \ s-1)^*\\
 & (l)^*(l+1)^* \ldots (n)^*,
\end{align*}
where the first or second row (or both) may disappear. If in both
of these rows some elements with non-zero exponents are left, then
$w$ may be written in the above form with $i,j$ satisfying $i \leq
j \leq s-2$ and the exponents of $(i)$ and of $(sj)$ are positive.
If some elements remain in only one of the first two rows, in the
above form we have $i \leq s-1$ or $j \leq s-2$. Moreover, the
last or the second last row (or both of them) may also disappear.
If in both of these rows some elements remain with non-zero
exponents, $w$ may be written in the above form with $k,l$
satisfying $s \leq k \leq l$. If some elements remain in only one
of the last two rows, then in the above form we have $k \geq s$ or
$l \geq s$.

For example, for $s=2$ the above algorithm leads to the following
form of $w$:
\begin{align*}
 w = & (1)^* \\
  & (21)^* (31)^* \ldots (k1)^*\\
 & (l)^*(l+1)^* \ldots (n)^*,
\end{align*}
where the conditions concerning the last two rows and the values
of $k$ and $l$ are similar to those described above for an
arbitrary $s$.

We shall prove by induction on $n$ that the form (\ref{postac}) of
$w$ is unique. By assumption $n \geq 3$.

First, consider the case $n=s=3$. We get
\begin{align*}
 w = & (1)^* (2)^* \\
 & (3 \ 1)^*\\
 & (3 \ 2)^*\\
 & (3)^*,
\end{align*}
where at least one among the exponents of $(2)$, $(3 \ 1)$ is
zero. Let \[w = (1)^x (2)^y (3 \ 1)^z (3 \ 2)^t (3)^u, \ \ \ W =
(1)^X (2)^Y (3 \ 1)^Z (3 \ 2)^T (3)^U,\] where either $y=0$ or
$z=0$ and either $Y=0$ or $Z=0$. Assume that $w=W$.

If $\rho$ is of type $\diam$, the relations introduced by
factoring by $\rho$ are $\ol{a_1}\ol{a_2}=\ol{a_2}\ol{a_1}$ and
$\ol{a_1}\ol{a_3}\ol{a_2}=\ol{a_2}\ol{a_3}\ol{a_1}$. By
introducing in $M_\rho$ a new relation $\ol{a_1}=\ol{a_2}$, we
obtain a homomorphism of $M_\rho$ into the Chinese monoid $M_2 =
\la \ol{a_1}, \ol{a_3} \ra$. So, we can use the canonical forms in
$M_2$ of the images of $w$ and $W$, by comparing the corresponding
exponents. The image of $w$ is $(1)^{x+y} (3 \ 1)^{z+t} (3)^u$,
where either $y=0$ or $z=0$, the image of $W$ is $(1)^{X+Y} (3 \
1)^{Z+T} (3)^U$, where either $Y=0$ or $Z=0$. Therefore,
\[\begin{cases}
x+y = X+Y \\
z+t = Z+T \\
u = U.
\end{cases}\]
On the other hand, comparing the degrees of $w$ and $W$ with
respect to the generators, we obtain
\[\begin{cases}
x+z = X+Z \\
y+t = Y+T \\
z+t+u = Z+T+U.
\end{cases}\]
If $x,y,z,t,u$ are known and the equality $Y=0$ holds, we may
calculate $U=u$, $T=y+t$, $X=x+y$ oraz $Z=z-y$. Since either $y=0$
or $z=0$ and the exponents are non-negative, the equality $Z=z-y$
yields $y=0$. Therefore $Z=z$, $T=t$, $X=x$, so all exponents in
$w$ and $W$ are equal. The case of $Z=0$ is similar. Therefore,
for $n=s=3$ the form (\ref{postac}) is unique, as was claimed.

A similar proof works in the case where $n=3$ and $s=2$.

Thus, assume that $n>3$ and that the above form (\ref{postac}) is
unique for all Chinese monoids of rank $m<n$ and all congruences
of type $\diam$ defined on them.

Assume first that $s \geq 3$. Then, as we already know, $w$ is of
the form
\begin{align*}
\tag{$\star$}
 w = & (1)^{\alpha_{1}} (2)^{\alpha_{2}} \ldots (i)^{\alpha_{i}} \ldots (s-1)^{\alpha_{s-1}}\\
  & (s1)^{\alpha_{s1}} \ldots (sj)^{\alpha_{sj}} (s \ j+1)^{\alpha_{s \ j+1}} \ldots (s \ s-2)^{\alpha_{s \ s-2}}\\
  & (s+1 \ 1)^{\alpha_{s+1 \ 1}} (s+1 \  2)^{\alpha_{s+1 \ 2}} \ldots (s+1 \ s-2)^{\alpha_{s+1 \ s-2}}\\
  &  \ldots \\
 & (n1)^{\alpha_{n1}} (n2)^{\alpha_{n2}} \ldots (n \ s-2)^{\alpha_{n \ s-2}}\\
 & (s\ s-1)^{\alpha_{s\ s-1}} (s+1\ s-1)^{\alpha_{s+1\ s-1}} \ldots (k\ s-1)^{\alpha_{k \ s-1}}\ldots(n\ s-1)^{\alpha_{n \ s-1}}\\
 &  (s)^{\alpha_{s}} \ldots (l)^{\alpha_{l}} (l+1)^{\alpha_{l+1}} \ldots (n)^{\alpha_{n}} ,
\end{align*}
where the following conditions (\ref{war2}) and (\ref{war3}) hold:
\begin{align*}
\label{war2} \tag{$\star \star$}
& \alpha_{1}=\alpha_{2}=\ldots=\alpha_{s-1} =0  \text{ or}  \\
& \alpha_{s1}=\alpha_{s2}=\ldots=\alpha_{s \ s-2} =0  \text{ or} \\
& \text{in the first and in the second row there exist non-zero exponents and} \\
& \begin{cases}
\alpha_{i+1}=\alpha_{i+2}=\ldots=\alpha_{s-1} =0 \\
\alpha_{s1}=\alpha_{s2}=\ldots=\alpha_{s \ j-1} =0, \ \text{ where
} i \leq j \leq s-2
\end{cases}
\end{align*}
and
\begin{align*}
\label{war3} \tag{$\star \star \star$}
& \alpha_{s \ s-1}=\alpha_{s+1 \ s-1}= \ldots = \alpha_{n \ s-1}=0 \text{ or} \\
& \alpha_{s}=\alpha_{s+1}=\ldots=\alpha_n=0 \text{ or} \\
& \text{in the last and in the second last row there exist non-zero exponents and} \\
& \begin{cases}
\alpha_{k+1 \ s-1}=\alpha_{k+2 \ s-1}=\ldots=\alpha_{n \ s-1}=0 \\
\alpha_{s}=\alpha_{s+1}=\alpha_{l-1}=0,  \ \text{ where } s \leq k
\leq l.
\end{cases}
\end{align*}

Let us now introduce a new relation $\ol{a_1}=\ol{a_2}$ in
$M_{\rho}$. Then the relations in the new monoid
$M_{\rho}/(\ol{a_1}=\ol{a_2})$ are exactly the same as in the
Chinese monoid of rank $n-1$ with generators $\ol{a_2}, \ldots,
\ol{a_n}$ and with relations of $\diamondsuit$ type for the
distinguished  elements $\ol{a_{s-1}}, \ol{a_s}$. Using notation
of \ref{Mkreski}, we get natural isomorphisms
$M_{\rho}/(\ol{a_1}=\ol{a_2}) \simeq \ol{M_{2,\ldots,n}^{s-1, s}}
\simeq \ol{M_{n-1}^{s-2, s-1}}$.

In the new monoid $M_{\rho}/(\ol{a_1}=\ol{a_2})$, the image of $w$
is
\begin{align*}
 \tilde{w} = & (2)^{\alpha_{1}+\alpha_{2}} \ldots (i)^{\alpha_{i}} \ldots (s-1)^{\alpha_{s-1}}\\
  & (s2)^{\alpha_{s1}+\alpha_{s2}} \ldots (sj)^{\alpha_{sj}} (s \ j+1)^{\alpha_{s \ j+1}} \ldots (s \ s-2)^{\alpha_{s \ s-2}}\\
  & (s+1 \  2)^{\alpha_{s+1 \ 1}+ \alpha_{s+1 \ 2}} \ldots (s+1 \ s-2)^{\alpha_{s+1 \ s-2}}\\
  &  \ldots \\
 & (n2)^{\alpha_{n1}+\alpha_{n2}} \ldots (n \ s-2)^{\alpha_{n \ s-2}}\\
 & (s\ s-1)^{\alpha_{s\ s-1}} (s+1 \ s-1)^{\alpha_{s+1 \ s-1}} \ldots (k \ s-1)^{\alpha_{k \ s-1}}\ldots(n\ s-1)^{\alpha_{n \ s-1}}\\
 & (s)^{\alpha_{s}} \ldots (l)^{\alpha_{l}} (l+1)^{\alpha_{l+1}} \ldots (n)^{\alpha_{n}} ,
\end{align*}
where conditions (\ref{war2}) and (\ref{war3}) hold.

Assume we have an element $w'$ such that $w'=w$ in $M_\rho$, which
is also written in the form (\ref{postac}), with exponents denoted
respectively by $\beta_x$ or $\beta_{xy}$. To prove that the form
(\ref{postac}) is unique, we need to show that the corresponding
exponents $\alpha$ and $\beta$ are equal. Let the image of $w'$,
after introducing the relation $\ol{a_1}=\ol{a_2}$, be equal to
\begin{align*}
 \tilde{w}' = & (2)^{\beta_1+\beta_2} \ldots (i')^{\beta_{i'}}\ldots (s-1)^{\beta_{s-1}} \\
  & (s2)^{\beta_{s1}+\beta_{s2}} \ldots  (sj')^{\beta_{sj'}} (s \ j'+1)^{\beta_{s \ j'+1}} \ldots (s \ s-2)^{\beta_{s \ s-2}}\\
  & (s+1 \  2)^{\beta_{s+1 \ 1}+ \beta_{s+1 \ 2}} \ldots (s+1 \ s-2)^{\beta_{s+1 \ s-2}}\\
  &  \ldots \\
 & (n2)^{\beta_{n1}+\beta_{n2}} \ldots (n \ s-2)^{\beta_{n \ s-2}}\\
 & (s \ s-1)^{\beta_{s \ s-1}} (s+1 \ s-1)^{\beta_{s+1 \ s-1}} \ldots (k' \ s-1)^{\beta_{k' \ s-1}}\ldots(n\ s-1)^{\beta_{n \ s-1}}\\
 & (s)^{\beta_s} \ldots  (l')^{\beta_{l'}} (l'+1)^{\beta_{l'+1}} \ldots (n)^{\beta_n} ,
\end{align*}
where conditions analogous to (\ref{war2}) and (\ref{war3}) hold.

Since $w=w'$, also $\tilde{w}=\tilde{w}'$. Now, $\tilde{w}$ and
$\tilde{w}'$ are elements of the monoid $\ol{M_{2,\ldots,n}^{s-1,
s}}$. As noted before, this monoid is defined by relations of type
$\diam$ on a Chinese monoid on $n-1$ generators. Therefore, by the
induction hypothesis, $w$ and $w'$ are uniquely presented in the
form (\ref{postac}). Thus $i=i'$, $j=j'$, $k=k'$, $l=l'$ (if they
exist) and

{\small{
\begin{align*}
\hspace{-0cm} \alpha_{1}+\alpha_{2} = & \beta_1+\beta_2    & \ldots & & \alpha_{i} = & \beta_{i'}  & \ldots &  & \alpha_{s-1} = & \beta_{s-1} \\
\hspace{-0cm} \alpha_{s1}+\alpha_{s2} = & \beta_{s1}+\beta_{s2}  &  \ldots & & \alpha_{sj} = & \beta_{sj'} & \ldots  &  & \alpha_{s \ s-2} = & \beta_{s \ s-2}\\
\hspace{-0cm} \alpha_{s+1 \ 1}+ \alpha_{s+1 \ 2} = & \beta_{s+1 \ 1}+ \beta_{s+1 \ 2} &\ldots& &\ldots& &\ldots& & \alpha_{s+1 \ s-2} = & \beta_{s+1 \ s-2}\\
\hspace{-0cm} \ldots & & \ldots & &  \ldots & & \ldots  & & \ldots & \\
\hspace{-0cm} \alpha_{n1}+\alpha_{n2} =  & \beta_{n1}+\beta_{n2} & \ldots &            & \ldots  & & \ldots & & \alpha_{n \ s-2} = & \beta_{n \ s-2}\\
\hspace{-0cm} \alpha_{s \ s-1} = & \beta_{s \ s-1} & \ldots & & \alpha_{k \ s-1} = & \beta_{k' \ s-1} & \ldots & & \alpha_{n \ s-1} = & \beta_{n \ s-1}\\
\hspace{-0cm} \alpha_{s} = & \beta_s & \ldots & & \alpha_{l} = &
\beta_{l'} & \ldots  & & \alpha_{n} = & \beta_n,
\end{align*}
}} \noindent where conditions (\ref{war2}) and (\ref{war3}) hold.

Suppose $s>3$. Then we may introduce the relation
$\ol{a_2}=\ol{a_3}$ in $M_{\rho}$ instead of the relation
$\ol{a_1}=\ol{a_2}$. We then obtain a system of equations similar
to the one above. It can easily be checked that this system,
combined with the one above, leads to the conclusion that all
corresponding exponents $\alpha$ and $\beta$ are equal. Therefore
the form (\ref{postac}) of $w$ is unique.

The last case to consider is $s \leq 3$.  Suppose first that
$s<n-1$.

We may then impose a new relation $\ol{a_{n-1}}=\ol{a_n}$ in
$M_{\rho}$. As in the case considered above, using the induction
hypothesis and the commutativity of all elements of the form $nx$
for arbitrary $x$, we conclude that $i=i'$, $j=j'$, $k=k'$, $l=l'$
(if they exist) and that

{\small{
\begin{align*}
\hspace{-0.5cm} \alpha_{1} = & \beta_1     & \ldots &  & \alpha_{i} = & \beta_{i'} & \ldots & & \alpha_{s-1} = & \beta_{s-1} \\
\hspace{-0.5cm} \alpha_{s1} = & \beta_{s1} & \ldots  & &  \alpha_{sj} = & \beta_{sj'}  & \ldots  &  & \alpha_{s \ s-2} = & \beta_{s \ s-2}\\
\hspace{-0.5cm} \alpha_{s+1 \ 1} = & \beta_{s+1 \ 1} &\ldots& &\ldots& &\ldots& & \alpha_{s+1 \ s-2} = & \beta_{s+1 \ s-2}\\
\hspace{-0.5cm} \ldots & & \ldots & &  \ldots & & \ldots  & & \ldots & \\
\hspace{-0.5cm} \alpha_{n-1 \ 1} + \alpha_{n1} =  & \beta_{n-1 \ 1} + \beta_{n1} & \ldots & & \ldots  & & \ldots & & \alpha_{n-1 \ s-2} + \alpha_{n \ s-2} = & \beta_{n-1 \ s-2} + \beta_{n \ s-2}\\
\hspace{-0.5cm} \alpha_{s \ s-1} = & \beta_{s \ s-1} & \ldots & & \alpha_{k \ s-1} = & \beta_{k' \ s-1} & \ldots & & \alpha_{n \ s-1} = & \beta_{n \ s-1}\\
\hspace{-0.5cm} \alpha_{s} = & \beta_s & \ldots & &  \alpha_{l} =
& \beta_{l'} & \ldots  & & \alpha_{n-1} + \alpha_n = & \beta_{n-1}
+ \beta_n,
\end{align*}
}} \noindent where conditions (\ref{war2}) and (\ref{war3}) hold.

We may also impose in $M_{\rho}$ the relation $\ol{a_{n-2}} =
\ol{a_{n-1}}$ instead of $\ol{a_{n-1}} = \ol{a_n}$. We then get a
similar system of equations which, combined with the one above,
implies that all corresponding exponents $\alpha$ and $\beta$ are
equal. Thus the form (\ref{postac}) of $w$ is indeed unique.

It remains to consider the case where $n-1 \leq s$ and $s \leq 3$.
Then $n \leq 4$. For $n=3$ we know by the induction hypothesis
that the claim is true. For $n=4$ we have to consider only the
case $4-1 \leq s \leq 3$, so $s=3$.

Introducing in $M_4/\rho$ the relation $\ol{a_1}=\ol{a_2}$
leads to a system of equations as above. Also, independently, we
may introduce the relation $\ol{a_3} = \ol{a_4}$, which leads to
another system of a similar type. These two systems of equations
combined easily lead to the conclusion that
 \[\begin{cases}
   \alpha_{31} + \alpha_{32} = \beta_{31} + \beta_{32} \\
   \alpha_{41} + \alpha_{42} = \beta_{41} + \beta_{42} \\
   \alpha_{31} + \alpha_{41} = \beta_{31} + \beta_{41} \\
   \alpha_{32} + \alpha_{42} = \beta_{32} + \beta_{42}
 \end{cases}\]
and all other exponents $\alpha, \beta$ are, respectively, equal.
Furthermore, either $\alpha_{2}=0$ or $\alpha_{31}=0$ and either
$\alpha_{3}=0$ or $\alpha_{42}=0$, similarly for $\beta$.

Now we introduce in $M_4/\rho$ the relations $\ol{a_2} \ol{a_x} =
\ol{a_x} \ol{a_2}$ and $\ol{a_3} \ol{a_x} = \ol{a_x} \ol{a_3}$ for
$x = 1,2,3,4$. We obtain
\[\tilde{M} =
\raisebox{1ex}{$M_4/\rho$} \Big/
\raisebox{-1ex}{$(\substack{\ol{a_2} \text{ central}
\\ \ol{a_3} \text{ central}})$}
\simeq M_2 \times \la \ol{a_2} \ra \times \la \ol{a_3} \ra,\]
where $\ol{a_1}$ and $\ol{a_4}$ are the generators of $M_2$. In
$M_2$ we have the canonical form of elements, so the elements of
$M_2 \times \la \ol{a_2} \ra \times \la \ol{a_3} \ra$ may be
written in the canonical form $(1)^*(41)^*(4)^*(2)^*(3)^*$.
Therefore, for the any elements $w=w'$ in $M_4/\rho$ and their
images $\tilde{w}=\tilde{w}' \in \tilde{M}$, comparing the
exponents in the canonical forms $\tilde{w}$ and $\tilde{w}'$ we
obtain in particular $\alpha_{41} = \beta_{41}$. Combined with the
system of equations obtained above for exponents $\alpha$ and
$\beta$, this equality leads to the conclusion that all respective
exponents are equal. Therefore the forms $w$ and $w'$ are
identical, so the form (\ref{postac}) is indeed unique also in
this case.

Therefore, we have a unique form (\ref{postac}) of elements in
$M_{\rho}$. We shall now prove that the element $s \ (s-1)$ is
regular. Suppose that for some elements $w, w'$ the equality $s \
(s-1) \ w = s \ (s-1) \ w'$ holds and that the exponents of the
element $s \ (s-1)$ in elements $w, w'$ written in the form
(\ref{postac}) are $\alpha_{s \ s-1}$ and $\beta_{s \ s-1}$,
respectively. By Definition \ref{types-def}, the element $\ol{a_s
a_{s-1}}$, denoted here by $s \ (s-1)$, is central in $M_{\rho}$.
Therefore the exponents of the element $s \ (s-1)$ in  $s \ (s-1)
\ w$ and $s \ (s-1) \ w'$, written in the form (\ref{postac}), are
equal to $\alpha_{s \ s-1}+1$ and $\beta_{s \ s-1}+1$,
respectively. Since, by assumption, $s \ (s-1) \ w = s \ (s-1) \
w'$ and the form (\ref{postac}) is unique, the equality $\alpha_{s
\ s-1}+1 = \beta_{s \ s-1}+1$ holds, so $\alpha_{s \ s-1} =
\beta_{s \ s-1}$ also holds. All other exponents in the canonical
forms of the elements $s \ (s-1) \ w$ and $s \ (s-1) \ w'$ are the
same as in the elements $w$ and $w'$, so they are equal. Hence
$w=w'$, which means that the element $s \ (s-1)$ is left regular.
Since it is central, the assertion follows.
\end{proof}

Note that Remark~\ref{asas-1centr} and Lemma~\ref{asas-1reg} imply
that for $\rho$ of $\diam$ type we can consider the central
localization $M_{\rho} \la (\ol{a_s} \ol{a_{s-1}})^{-1} \ra$.

\begin{lem}
  \label{Mkaro}
If $\rho$ is a congruence of  $\diamondsuit$ type with
distinguished generators  $a_{s-1},a_s$, then there is an
isomorphism
\[M_{\rho} \la (\ol{a_s} \ol{a_{s-1}})^{-1} \ra \simeq \ol{M_{n-2}^{s-1,s}} \times B \times \Z\]
which is the natural extension of the map $\lambda \colon M_{\rho}
\rar \ol{M_{n-2}^{s-1,s}} \times B \times \Z$ defined by
\[\begin{cases}
\lambda(\ol{a_{s-1}}) = (1,p,g)  \\
\lambda(\ol{a_s}) = (1,q,1) \\
\lambda(\ol{a_i}) = (\ol{a_i},p,1)  \ \ \textup{for} \ i < s-1 \\
\lambda(\ol{a_j}) = (\ol{a_j},q,1)  \ \ \textup{for} \  j > s.
\end{cases}\]
\end{lem}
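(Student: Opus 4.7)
The strategy is to first verify that $\lambda$ is a well-defined monoid homomorphism on $M_\rho$, then extend it to the central localization using the fact that $\lambda(\ol{a_s}\,\ol{a_{s-1}}) = (1, qp, g) = (1, 1, g)$ is invertible in the target, and finally show that the extension is a bijection. Centrality (Remark \ref{asas-1centr}) and regularity (Lemma \ref{asas-1reg}) of $\ol{a_s}\,\ol{a_{s-1}}$ in $M_\rho$ ensure that the central localization exists, and the invertibility of its $\lambda$-image makes the extension automatic.

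For well-definedness, I would extend $\lambda$ to the free monoid on $a_1,\ldots,a_n$ and verify it respects the Chinese relations $a_j a_i a_k = a_j a_k a_i = a_k a_j a_i$ for $i\le k\le j$ and the $\diam$ relations from Definition~\ref{types-def}, checking each of the three factors separately. In the first factor $\ol{M_{n-2}^{s-1,s}}$, the images of $\ol{a_{s-1}}$ and $\ol{a_s}$ are trivial, while the remaining $\ol{a_i}$'s ($i<s-1$) commute pairwise and the $\ol{a_j}$'s ($j>s$) commute pairwise, so every relation reduces either to a rank-$(n-2)$ Chinese relation or to a commutativity relation that holds by construction. In $B$, each relation becomes a pair of words in $p,q$ that coincide modulo $qp=1$ (for example $qpp=p=pqp$ realizes $a_s a_1 a_{s-1}=a_{s-1}a_s a_1$). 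In $\Z$ it suffices that every defining relation preserves the number of occurrences of $a_{s-1}$, which is immediate by inspection.

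Surjectivity exploits the identity $qp=1$ in $B$. For $i<s-1$ one computes $\lambda(\ol{a_s}\,\ol{a_i}) = (\ol{a_i}, qp, 1) = (\ol{a_i}, 1, 1)$, and for $j>s$, $\lambda(\ol{a_j}\,\ol{a_{s-1}})\cdot(1,1,g^{-1}) = (\ol{a_j}, qp, g)\cdot(1,1,g^{-1}) = (\ol{a_j}, 1, 1)$, so every $(m,1,1)$ with $m\in\ol{M_{n-2}^{s-1,s}}$ lies in the image. Also $\lambda(\ol{a_{s-1}})^a\lambda(\ol{a_s})^b(1,1,g)^{c-a} = (1, p^a q^b, g^c)$ reaches every $(1,b,g^c)$, and taking products fills the whole target.

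The main obstacle is injectivity, which is where the unique canonical form $(\star)$ from Lemma~\ref{asas-1reg} is essential. Suppose $w,w'\in M_\rho$ are written in this form with exponents $\alpha_?$ and $\beta_?$ and $\lambda(w)=\lambda(w')$. The three projections carry complementary information: $\pi_3\circ\lambda$ records the total number of occurrences of $a_{s-1}$, visible in $(\star)$ as $\alpha_{s-1}+\sum_{r\ge s}\alpha_{r\,s-1}$; the composition $\pi_1\circ\lambda$ trivializes $\ol{a_{s-1}}$ and $\ol{a_s}$ and lands in $\ol{M_{n-2}^{s-1,s}}$, where the resulting element has its own canonical form that pins down every exponent in $(\star)$ not involving solely $a_{s-1}$ and $a_s$; and $\pi_2\circ\lambda$ into $B$ disambiguates the remaining exponents of $\ol{a_{s-1}}$ and $\ol{a_s}$ (the contributions of rows~1, 6 and 7 of $(\star)$). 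Combining these three observations forces $\alpha_?=\beta_?$ in $(\star)$, so $w=w'$. An equality $\lambda(w)(1,1,g)^{-k} = \lambda(w')(1,1,g)^{-k'}$ in the localization then rewrites as $\lambda(w\cdot(\ol{a_s}\,\ol{a_{s-1}})^{k'}) = \lambda(w'\cdot(\ol{a_s}\,\ol{a_{s-1}})^{k})$, hence as an equality in $M_\rho$ by the injectivity just established, and by the regularity of $\ol{a_s}\,\ol{a_{s-1}}$ this yields the equality in the localization.
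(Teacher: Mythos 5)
You follow the paper's opening moves: well-definedness of $\what{\psi}_\diam$ verified coordinate by coordinate, the $qp=1$ trick for surjectivity, and extension to the central localization via invertibility of $(1,1,g)$. The departure is in the injectivity argument, which is where the paper invests most of its effort, and which contains a genuine gap in your write-up.

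Your claim that $\pi_1\circ\lambda$ "pins down every exponent in $(\star)$ not involving solely $a_{s-1}$ and $a_s$" is incorrect. Under $\pi_1$ the generators $a_{s-1}$ and $a_s$ collapse to $1$, so the factor $(a_m)^{\alpha_m}$ from row $1$ of $(\star)$ and the factor $(a_sa_m)^{\alpha_{sm}}$ from row $2$ both map to powers of $\ol{a_m}$, and $\pi_1$ can only recover the sum $\alpha_m+\alpha_{sm}$; the same mixing occurs between the pairs $\alpha_{t\,s-1},\alpha_t$ arising from rows $6$ and $7$. The $B$- and $\Z$-projections likewise yield only aggregate sums: the $p$- and $q$-exponents are the row-$1$ and row-$7$ totals, and the $\Z$-component is the total degree in $a_{s-1}$. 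So the combined data from the three projections amounts to a linear system that is a priori underdetermined. One might hope to recover the individual exponents by invoking the ``one-sided overlap'' conditions $(\star\star)$ and $(\star\star\star)$ — this does work in small cases, and it is in that spirit that Lemma~\ref{asas-1reg} proceeds — but you assert it rather than prove it, and your argument also tacitly relies on a unique canonical form for elements of $\ol{M_{n-2}^{s-1,s}}$, which is not established anywhere in the paper.

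The paper's proof avoids the direct exponent count altogether. After showing $\lambda'$ is surjective, it introduces the submonoid $C=\la a_sa_i,\, a_ja_{s-1} \colon i<s-1;\ s<j\ra\subseteq M$, proves the decomposition $M_\rho\la(\ol{a_s}\ol{a_{s-1}})^{-1}\ra=\ol{C}\cdot\la\ol{a_{s-1}},\ol{a_s},(\ol{a_s}\ol{a_{s-1}})^{-1}\ra$, constructs a surjection $\beta\colon\ol{M_{n-2}^{s-1,s}}\na\ol{C}$, and then uses a commutative square of epimorphisms whose top edge is an equality to force all maps, in particular $\lambda'$, to be isomorphisms. This structural route sidesteps exponent bookkeeping entirely. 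If you wish to keep your direct approach, you must close the gap between the sums your projections see and the individual $(\star)$-exponents, which at a minimum requires a case analysis on the overlap indices $i,j,k,l$ of $(\star\star)$ and $(\star\star\star)$, together with a uniqueness statement for canonical forms in $\ol{M_{n-2}^{s-1,s}}$.
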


\begin{proof}
Define the transformation $\what{\psi}_{\diam} \colon M
\rightarrow \ol{M_{n-2}^{s-1,s}}\times B \times \Z$ by
\[\begin{cases}
\what{\psi}_{\diam}(a_{s-1}) = (1,p,g)  \\
\what{\psi}_{\diam}(a_s) = (1,q,1) \\
\what{\psi}_{\diam}(a_i) = (\ol{a_i},p,1)  \ \ \textup{for} \ i < s-1 \\
\what{\psi}_{\diam}(a_j) = (\ol{a_j},q,1)  \ \ \textup{for} \  j >
s.
\end{cases}\]
We will prove that $\what{\psi}_{\diam}$ is a homomorphism by
checking each  coordinate separately.

In the first coordinate we have the transformation $a_{s-1}
\mapsto 1$, $a_s \mapsto 1$, $a_l \mapsto \ol{a_l} =
\psi_\diam(a_l)$ for $l \neq s-1, s$. We shall check that under
this map the images of generators satisfy all the relations
satisfied by the generators, i.e. the defining relations of $M$:
$a_ia_ja_k = a_ia_ka_j=a_ja_ia_k$ for $i \geq j \geq k$.

If all three indices $i, j, k$ are equal to $s-1$ or $s$, then all
the images of generators  are equal 1 and satisfy all the
relations.

If exactly two among the indices $i, j, k$ are equal $s-1$ or $s$,
then on both sides of each of the relations there is only the
image of the generator with the third index. Therefore all the
relations are satisfied.

If only the index $i$ is equal $s-1$ or $s$, then
\begin{align*}
a_ia_ja_k & \mapsto  1 \ol{a_j} \ol{a_k},\\
a_ia_ka_j & \mapsto  1 \ol{a_k} \ol{a_j},\\
a_ja_ia_k & \mapsto  \ol{a_j} 1 \ol{a_k}
\end{align*}
and all the images are equal, because, by the definition of
$\psi_\diam$,  $\ol{a_j}$ and $\ol{a_k}$ commute. Similarly, one
verifies the case where only the index $k$ is equal to $s-1$ or
$s$.

If only the index $j$ is equal $s-1$ or $s$, then
\begin{align*}
a_ia_ja_k & \mapsto  \ol{a_i} 1 \ol{a_k},\\
a_ia_ka_j & \mapsto  \ol{a_i} \ol{a_k} 1,\\
a_ja_ia_k & \mapsto  1 \ol{a_i} \ol{a_k}
\end{align*}
and in this case also all the images are equal.

If none of the indices $i, j, k$ is equal to $s-1$ or $s$, then
the images of the  generators satisfy the respective relations,
because $\psi_\diam$ is a homomorphism. This completes the proof
of the fact that the first coordinate of $\what{\psi}_{\diam}$ is
a homomorphism.

In the second coordinate we have  $a_i \mapsto p$ for $i \leq
s-1$,  $a_j \mapsto q$ for $j \geq s$. As above, we verify that
under this map, the images of the generators satisfy all the
defining relations of $M$. Let $i\geq j\geq k$. If $k \geq s$,
then the image of each of the elements $a_ia_ja_k$, $a_ia_ka_j$,
$a_ja_ia_k$ is equal to $q^3$, thus these images are equal.
Similarly, if  $i \leq s-1$, then the images are equal to $p^3$.
If $i \geq s > s-1 \geq j \geq k$, then
\begin{align*}
a_ia_ja_k \mapsto qpp &= p,\\
a_ia_ka_j \mapsto qpp &= p,\\
a_ja_ia_k \mapsto pqp &= p,
\end{align*}
thus also all the images are equal. Similarly, for $i \geq j \geq
s> s-1 \geq k$ all the images are equal to $q$. Therefore the
second coordinate of $\what{\psi}_{\diam}$ is indeed a
homomorphism.

The third coordinate of $\what{\psi}_{\diam}$ is a homomorphism as
well, because all the relations in $M$ are homogeneous with
respect to $a_{s-1}$. This completes the proof of the fact that
$\what{\psi}_{\diam}$ is a homomorphism.

We shall now prove that $ker (\psi_{\diam}) \ssq ker
(\what{\psi}_{\diam})$. So, assume that
$\psi_\diam(x)=\psi_\diam(y)$ for some $x,y \in M$. We will show
that also $\what{\psi}_{\diam}(x)=\what{\psi}_{\diam}(y)$. It
suffices to prove that $\what{\psi}_{\diam}(x) =
\what{\psi}_{\diam}(y)$ for pairs $(x,y)$ generating $\rho$.

For $i,k<s-1$ and the pair $(a_ia_k, a_ka_i)$, we obtain
\[\what{\psi}_{\diam}(a_ia_k) = (\ol{a_i},p,1) \cdot (\ol{a_k},p,1) = (\ol{a_ia_k},p^2,1)\]
and similarly $\what{\psi}_{\diam}(a_ka_i) = (\ol{a_ka_i},p^2,1).$
Therefore $\what{\psi}_{\diam}(a_ia_k) =
\what{\psi}_{\diam}(a_ka_i)$, as claimed. The proof is similar for
$j,l>s$ and the pair $(a_ja_l, a_la_j)$.

For $i<s-1$ and the pair $(a_ia_{s-1}, a_{s-1}a_i)$, we obtain
\[\what{\psi}_{\diam}(a_ia_{s-1})=(\ol{a_i},p,1) \cdot (1,p,g) = (\ol{a_i},p^2,g) =
\what{\psi}_{\diam}(a_{s-1}a_i).\] Similarly for $j>s$ and the
pair $(a_ja_s, a_sa_j)$.

For $i,k<s-1$ and the pair $(a_ia_sa_k, a_ka_sa_i)$, we obtain
\[\what{\psi}_{\diam}(a_ia_sa_k) = (\ol{a_i},p,1) \cdot (1,q,1) \cdot (\ol{a_k},p,1) =
(\ol{a_ia_k},pqp,1) = (\ol{a_ia_k},p,1) =
\what{\psi}_{\diam}(a_ka_sa_i).\]  Similarly for $j,l>s$ and the
pair $(a_ja_{s-1}a_l, a_la_{s-1}a_j)$. If in the above cases
$i=s-1$, $k=s-1$, $j=s$ or $l=s$, the proof is analogous.

We have thus completed the proof of the fact that $ker
(\psi_{\diam}) \ssq ker (\what{\psi}_{\diam})$.

Therefore,  $\what{\psi}_{\diam}$ can be presented as the
composition of the epimorphism $\psi_\diam$ and some homomorphism
$\lambda$:
\[
\xymatrix{
M \ar@{->>}[d]_{\psi_\diam} \ar[r]^(0.3){\what{\psi}_{\diam}} & \ol{M_{n-2}^{s-1,s}}\times B \times \Z \\
M_{\rho} \ar@{-->}[ur]_{\lambda} }
\]
The element $\ol{a_s}\ol{a_{s-1}}$ is central in $M_{\rho}$ and
from Lemma \ref{asas-1reg}  we know it is regular, therefore we
can consider the localization $M_{\rho}
\la(\ol{a_s}\ol{a_{s-1}})^{-1}\ra$. The image
$\lambda(\ol{a_sa_{s-1}}) = (1,1,g)$ is an invertible element in
$\ol{M_{n-2}^{s-1,s}}\times B \times \Z$. Hence, we may consider
the natural extension $\lambda'$ of $\lambda $ to the localization
$M_{\rho} \la (\ol{a_s} \ol{a_{s-1}})^{-1} \ra$.

We shall check that $\lambda'$ is an epimorphism. We have
\[\lambda'(\ol{a_sa_{s-1}}) = (1,1,g) \ \ \textup{and} \ \
\lambda'((\ol{a_sa_{s-1}})^{-1})=(1,1,g^{-1}),\] so also
\[\lambda'(\ol{a_{s-1}}(\ol{a_sa_{s-1}})^{-1})=(1,p,g)(1,1,g^{-1})=(1,p,1).\]
Moreover  \[\lambda'(\ol{a_s})=(1,q,1),\] so also
\[\lambda'(\ol{a_s}\ol{a_i})=(1,q,1)(\ol{a_i},p,1)=(\ol{a_i},1,1) \ \ \textup{for} \ \ i<s-1\] and \[\lambda'(\ol{a_j}(\ol{a_{s-1}}(\ol{a_sa_{s-1}})^{-1}))=(\ol{a_j},q,1)(1,p,1)=(\ol{a_j},1,1) \ \ \textup{for}\ \  j>s.\]
Therefore, in the image of $\lambda'$ we can obtain any value in
each of the three coordinates separately. Thus $\lambda'$ is an
epimorphism.

Next, we prove that $\lambda'$ determines an isomorphism $\la
\ol{a_{s-1}}, \ol{a_s}, (\ol{a_s}\ol{a_{s-1}})^{-1} \ra \simeq B
\times \Z$. Notice that by the definition of $M_n$ and
$\psi_\diam$, we have $\la \ol{a_{s-1}}, \ol{a_s} \ra \simeq \la
a_{s-1}, a_s \ra \simeq M_2$, because $\psi_\diam|_{\la  a_{s-1},
a_s \ra}$ is trivial. Let $M'_2 = \la \ol{a_{s-1}}, \ol{a_s} \ra$,
so that $M'_2 \simeq M_2$. The localization of $\la \ol{a_{s-1}},
\ol{a_s} \ra$ with respect to $\la (\ol{a_s}\ol{a_{s-1}})^{-1}
\ra$ is $\la \ol{a_{s-1}}, \ol{a_s}, (\ol{a_s}\ol{a_{s-1}})^{-1}
\ra$, so
\[\la \ol{a_{s-1}}, \ol{a_s}, (\ol{a_s}\ol{a_{s-1}})^{-1} \ra =
M'_2 \la (\ol{a_s}\ol{a_{s-1}})^{-1} \ra.\] Consider the
restriction of $\lambda'$ to $\la \ol{a_{s-1}}, \ol{a_s},
(\ol{a_s}\ol{a_{s-1}})^{-1} \ra$. We know that
\[\lambda'(\ol{a_{s-1}})=(1,p,g), \ \ \lambda'(\ol{a_s})=(1,q,1),
\ \ \lambda'((\ol{a_sa_{s-1}})^{-1})=(1,1,g^{-1}),\] therefore, a
proof similar to the  one above shows that this restriction of
$\lambda'$ is an epimorphism onto $\{1\} \times B \times \Z$.

We shall check that it is also an injection. Each element $w \in
\la \ol{a_{s-1}}, \ol{a_s}, (\ol{a_s}\ol{a_{s-1}})^{-1} \ra = M'_2
\la (\ol{a_s}\ol{a_{s-1}})^{-1} \ra$ can be written in the form
$w=\ol{a_{s-1}}^k \ol{a_s}^l (\ol{a_s}\ol{a_{s-1}})^{-m}$, where
$k,l \in \N$, $m \in \Z$. We then obtain $\lambda'(w) = (1,
p^kq^l, g^{k-m})$. If for some element $v=\ol{a_{s-1}}^{k'}
\ol{a_s}^{l'} (\ol{a_s}\ol{a_{s-1}})^{-m'}$ the equality
$\lambda'(v) = \lambda'(w) =  (1, p^kq^l, g^{k-m})$ holds, then
from the uniqueness of the canonical forms of elements of $B$ and
$\Z$ it follows that $k=k'$, $l=l'$, $k-m=k'-m'$, so also $m=m'$
and thus $w=v$.

Therefore the considered restriction of $\lambda'$ is indeed an
injection. Since we know it is a surjection, it is an isomorphism
and thus
\[M'_2 \la (\ol{a_s}\ol{a_{s-1}})^{-1} \ra = \la \ol{a_{s-1}}, \ol{a_s},
(\ol{a_s}\ol{a_{s-1}})^{-1} \ra  \nad{\lambda'}{\simeq} \{1\}
\times B \times \Z \simeq B \times \Z.
\]

We shall now prove that \[M_{\rho} \la (\ol{a_s}
\ol{a_{s-1}})^{-1} \ra \simeq \ol{C} \times B \times \Z,\] where
\[C \nadrow{def} \la a_sa_i, a_ja_{s-1} \colon  i<s-1; s<j \ra
\subseteq M.\] First we check that
\[M_{\rho} \la(\ol{a_s}\ol{a_{s-1}})^{-1}\ra = \ol{C} \cdot \la \ol{a_{s-1}}, \ol{a_s}, (\ol{a_s}\ol{a_{s-1}})^{-1} \ra.\]
By the relations in the Chinese monoid, the following equalities
hold in $M$:
\begin{align*}
  (a_sa_i)a_{s-1} = a_{s-1}(a_sa_i), \ (a_sa_i)a_s = a_s(a_sa_i)
\end{align*}
for $i \leq s-1$ and similarly
\begin{align*}
  (a_ja_{s-1})a_s =  a_s(a_ja_{s-1}), \  (a_ja_{s-1})a_{s-1} = a_{s-1}(a_ja_{s-1})
\end{align*}
for $j \geq s$. Analogous equalities hold in $M_{\rho}$.
Therefore, each element of the set $\ol{C} = \la \ol{a_s}\ol{a_i},
\ol{a_j}\ol{a_{s-1}} \colon i < s-1; \; s<j \ra$ commutes with all
elements of the set $\la \ol{a_{s-1}}, \ol{a_s}\ra$.

In the localization $M_{\rho} \la (\ol{a_s}\ol{a_{s-1}})^{-1} \ra$
the following  equalities hold:
\begin{align*}
\ol{a_i} &=  (\ol{a_s}\ol{a_{s-1}})^{-1} (\ol{a_s}\ol{a_i})\ol{a_{s-1}}  \ \ \textup{for} \ i < s-1,    \\
\ol{a_j} &=  (\ol{a_s}\ol{a_{s-1}})^{-1}
(\ol{a_j}\ol{a_{s-1}})\ol{a_s}  \ \ \textup{for} \ j > s.
\end{align*}
Hence $M_{\rho} \subseteq \la \ol{C}, \ol{a_{s-1}}, \ol{a_s},
(\ol{a_s}\ol{a_{s-1}})^{-1} \ra$, thus also $M_{\rho} \la
(\ol{a_s}\ol{a_{s-1}})^{-1} \ra \subseteq \ol{C}~\cdot~\la
\ol{a_{s-1}}, \ol{a_s}, (\ol{a_s}\ol{a_{s-1}})^{-1} \ra$. The
opposite inclusion holds by the definition of $C$, thus we obtain
\begin{equation}
\label{(1)} M_{\rho} \la (\ol{a_s}\ol{a_{s-1}})^{-1} \ra = \ol{C}
\cdot \la \ol{a_{s-1}}, \ol{a_s}, (\ol{a_s}\ol{a_{s-1}})^{-1} \ra.
\end{equation}
Let $\beta \colon \ol{M_{n-2}^{s-1,s}} \rightarrow \ol{M}$ be the
map defined  by
\[\begin{cases}
\beta(\ol{a_i}) =  \ol{a_sa_i} \ \ \textup{for} \ i < s-1    \\
\beta(\ol{a_j}) =  \ol{a_ja_{s-1}}  \ \ \textup{for} \ j > s.
\end{cases}\]
We shall check that $\beta$ is a homomorphism. It suffices to
check that all relations of $\ol{M_{n-2}}$ hold also for the
images of elements in $\ol{M}$. The relations in $\ol{M_{n-2}}$
are the relations of the Chinese monoid $M_{n-2}$ and the
relations introduced by $\rho$. To simplify notation, instead of
$\ol{a_x}$ we shall write only $x$. Then in $\ol{M_{n-2}}$ we
have:

1) relations from $\rho$: commutativity of elements $1,\ldots,
(s-2)$,

2) relations from $\rho$: commutativity of elements $(s+1),
\ldots, n$,

3) relations from the Chinese monoid: $zyx=zxy=yzx$ for $x \leq y
\leq z$.

Notice that if $z <s-1$ or $x >s$, then the relations listed in
(3) follow from the relations from (1) and (2). Therefore instead
of (3) we can consider only:

3.1) if $x \leq y<s<z$, then $zyx=yzx$,

3.2) if $x<s<y \leq z$, then $zyx=zxy$.

In $\ol{M}$ we have relations of the Chinese monoid $M$ and the
relations introduced by $\rho$. We shall now check that the images
of elements in $\ol{M}$ satisfy relations stated in (1), (2),
(3.1) and (3.2).

1) by the definition of $\beta$ for $i, k < s-1$ we have
$\beta(i)= si$, $\beta(k)=sk$, so using the relations in $\ol{M}$
we obtain $\beta(i)\beta(k) = sisk = s(isk) = s(ksi) = sksi =
\beta(k) \beta(i)$; therefore the images of elements $1,\ldots,
(s-2)$ commute.

2) by an analogous argument.

3.1) for $x \leq y \leq s-1 <s<z$, using the Chinese relations in
$\ol{M}$, we obtain
\begin{multline*}
  \beta(z) \beta(y) \beta(x) = z(s-1) sy sx = z (sx) (s-1) sy = (zx) s (s-1) (sy) =  \\
= (sy) zsx(s-1) =  sy z(s-1) sx = \beta(y) \beta(z) \beta(x).
\end{multline*}

3.2) by an analogous argument.

Thus, we have verified that $\beta$ is indeed a homomorphism.

From the definitions of $\beta$ and $C$ we obtain that $\beta
\colon \ol{M_{n-2}^{s-1,s}} \na \ol{C}$. Therefore $\ol{C}$ is the
homomorphic image of the monoid $\ol{M_{n-2}^{s-1,s}}$.

We may now define the natural homomorphism
\[\beta' \colon \ol{M_{n-2}^{s-1,s}} \times B \times \Z \rightarrow \ol{M} \times B \times \Z\]
as $\beta$ on the first coordinate $\ol{M_{n-2}^{s-1,s}}$ and
identity on $B \times \Z$. Therefore, $\beta'(\ol{M_{n-2}^{s-1,s}}
\times B \times \Z) = \ol{C} \times B \times \Z$. Earlier we have
shown that $\la \ol{a_{s-1}}, \ol{a_s},
(\ol{a_s}\ol{a_{s-1}})^{-1} \ra \simeq B \times \Z$. Therefore,
\begin{equation}
\label{(2)} \ol{C} \times  B \times \Z \simeq \ol{C} \times \la
\ol{a_{s-1}}, \ol{a_s}, (\ol{a_s}\ol{a_{s-1}})^{-1} \ra.
\end{equation}
The composition of the epimorphisms $\lambda'$ and $\beta'$ gives
a natural epimorphism
\begin{equation}
\label{(3)} \beta'\lambda' \colon M_{\rho} \la
(\ol{a_s}\ol{a_{s-1}})^{-1} \ra \na \ol{C} \times B \times \Z.
\end{equation}
We also have a natural epimorphism
\begin{equation}
\label{(4)} \ol{C} \times \la  \ol{a_{s-1}}, \ol{a_s},
(\ol{a_s}\ol{a_{s-1}})^{-1} \ra \na \ol{C} \cdot \la \ol{a_{s-1}},
\ol{a_s}, (\ol{a_s}\ol{a_{s-1}})^{-1} \ra.
\end{equation}
Using (\ref{(1)})-(\ref{(4)}) we obtain the commutative diagram
\[ \xymatrix{
M_{\rho} \la (\ol{a_s}\ol{a_{s-1}})^{-1} \ra
\ar@{}[r]|(0.4){\mbox{\normalsize{$=$}}}
\ar@{->>}[d]_{\beta'\lambda'} &
\ol{C} \cdot \la  \ol{a_{s-1}}, \ol{a_s}, (\ol{a_s}\ol{a_{s-1}})^{-1} \ra \\
\ol{C} \times B \times \Z
\ar@{}[r]|(0.4){\mbox{\normalsize{$\simeq$}}} & \ol{C} \times \la
\ol{a_{s-1}}, \ol{a_s}, (\ol{a_s}\ol{a_{s-1}})^{-1} \ra
\ar@{->>}[u] } \] Therefore both maps in (\ref{(3)}) and
(\ref{(4)}) must be isomorphisms. Thus in particular
\[M_{\rho} \la (\ol{a_s} \ol{a_{s-1}})^{-1} \ra \simeq \ol{C} \times B \times \Z.\]
Denote this isomorphism by $\alpha$, so $\alpha =\beta'\lambda'$.
Then we have the commutative diagram
\[ \xymatrix{
M \ar@{->>}[d]_{\psi_\diam} \ar[r]^(0.3){\what{\psi}_{\diam}} & \ol{M_{n-2}^{s-1,s}}\times B \times \Z \ar@{->>}[rd]^{\beta'} \\
M_{\rho} \ar[ur]^{\lambda}
\ar@{}[r]|(0.32){\mbox{\normalsize{$\ssq$}}} & M_{\rho} \la
(\ol{a_s} \ol{a_{s-1}})^{-1}\ra  \ar@{->>}[u]_{\lambda'}
\ar@{}[r]|(0.55){\mbox{\normalsize{$\simeq$}}}_(0.55)\alpha &
\ol{C} \times B \times \Z } \] This means that $\beta'$ and
$\lambda'$ are isomorphism, which in particular leads to the
conclusion that $M_{\rho} \la (\ol{a_s} \ol{a_{s-1}})^{-1} \ra
\simeq \ol{M_{n-2}^{s-1,s}} \times B \times \Z$. This completes
the proof.
\end{proof}

Notice that  $\ol{M_{n-2}^{s-1,s}} \ssq M_{\rho}$, but the factor
$\ol{M_{n-2}^{s-1,s}}$ in the image of $\lambda'$, i.e. in
$\ol{M_{n-2}^{s-1,s}} \times B \times \Z$, is not the same object.

\section{Minimal prime ideals in $K[M]$}
\label{r-min-id-pier}

In this section, a bijection between the set of minimal prime
ideals of $K[M]$ and the set of leaves of a certain tree $D$ is
established. More precisely, the elements $d\in D$ are defined as
diagrams of some special type in Definition~\ref{drzewo}. These
diagrams correspond in a constructive way to certain homogeneous
congruences $\rho (d)$ on $M$ (Construction~\ref{interpr}) and
therefore to the ideals $\I_{\rho (d)}$ of the algebra $K[M]$. The
ideals of $\h$ type and of $\diam$ type will correspond to the
first level of the tree. In particular, it will follow that every
minimal prime ideal $P$ of $K[M]$ is of the form $P=\I_{\rho_P}$,
where $\rho_{P}$ is the congruence on $M$ defined by $\rho_{P}=\{
(s,t) \in M\times M \colon s-t\in P \}$. Therefore $K[M]/P\simeq
K[M/\rho_{P}]$. The construction implies also that $M/\rho_{P}$
embeds into the monoid $\N^{c_P} \times (B \times \Z)^{d_P}$,
where $c_P+2d_P=n$. In Part \ref{s-ddD} the tree $D$ is
introduced. In Parts \ref{s-interpr} and \ref{s-minim} some
intermediate steps are proved. In particular,
Theorem~\ref{liscieidpier} shows that $\I_{\rho (d)}$ is a prime
ideal if $d$ is a leaf of $D$ and Theorem~\ref{idpierzawlisc}
shows that every prime ideal $P$ of $K[M]$ satisfies $\I_{\rho
(d)}\subseteq P$ for a leaf $d$ of $D$. The main result is then
derived in Theorem~\ref{bijekcja}.

Recall that if the rank $n$ of the monoid $M$ is equal to $1$ or
$2$ then the algebra $K[M]$ is prime and semiprimitive,
\cite{jofc}. Hence, as before, we shall assume that $n\geq 3$.

\subsection{Diagrams and the tree $D$}
\label{s-ddD}

\

\begin{ozn}
\label{diagr1} We start with defining certain auxiliary
\emph{diagrams}, built on the set of $n$ generators $a_1, a_2,
\ldots, a_n$ of $M$. Let $\podpis{\circ}{i}$ denote the $i$-th
generator. The simplest diagram is of the form
\[ \diag{ \mathop{\circ}\limits_{1}^{\,^{\,^{\,}}} & \mathop{\circ}\limits_{2}^{\,^{\,^{\,}}}
& \ldots & \mathop{\circ}\limits_{n\vphantom{1}}^{\,^{\,^{\,}}}  }
\] If unambiguous, we omit the indices,
denoting the above diagram also by
\[ \diag{ \circ & \ldots & \circ } \]
The next simple diagrams are of the form
\[ \diag{ \circ & \ldots & \circ & \bullet & \circ & \ldots & \circ
} \] with a distinguished generator $a_s$. A diagram of this type
will be called a \emph{dot $a_s$} or simply a \emph{dot}. We
consider such diagrams only for $s=2,3,\ldots,n-1$. If the number
of generators is $k<n$, such a diagram is called \emph{dot$_k$}.

A diagram of the form
\[ \diag{ \circ &  \ldots & \circ & \bul \ar@/^0.4pc/@{-}[r] & \bul & \circ & \ldots & \circ
} \] with an arc joining generators $a_{s-1}$ and $a_{s}$ is
called an \emph{arc $a_sa_{s-1}$} or simply an \emph{arc}. Here
$s$ can be any of the numbers $2, \ldots, n$. If the number of
generators is $k<n$, such a diagram is called an \emph{arc$_k$}.
\end{ozn}

Next we construct more complicated diagrams. It turns out that all
considered diagrams can be organized in a tree $D$, which
indicates the order and the way these diagrams are constructed.

\begin{df}
\label{drzewo} We construct a finite tree $D$ whose vertices are
diagrams. The construction is performed in several steps. We start
with defining the root of $D$, then in the first step we connect
it by edges with certain new diagrams, which treated as vertices
of $D$ form the \emph{first level of $D$}. In the next steps we
build the subsequent levels of $D$.
\begin{itemize}
  \item{We start with the vertex corresponding to the first of the diagrams described in
  \ref{diagr1}; this vertex is called the \emph{root} of $D$,}
  \item{in the first step we connect the root with $2n-3$ vertices:
  $n-2$ diagrams which are dots and $n-1$ diagrams which are arcs (in the sense of
  \ref{diagr1}); for example, if $n=4$, then we get the first level of $D$:
\newbox{\pu}
\newbox{\pud}
\newbox{\pude}
\newbox{\pudel}
\newbox{\pudelk}
\newbox{\pudelko}
 \savebox{\pu}{\begin{varwidth}[t][0.7cm][c]{\textwidth} $\diag{ \circ & \circ & \circ & \circ}$\end{varwidth}}
 \savebox{\pud}{\begin{varwidth}[t][0.7cm][c]{\textwidth}  $\diag{ \circ & \bul & \circ & \circ}$\end{varwidth}}
 \savebox{\pude}{\begin{varwidth}[t][0.7cm][c]{\textwidth}  $\diag{ \circ & \circ & \bul & \circ}$\end{varwidth}}
 \savebox{\pudel}{\begin{varwidth}[t][0.7cm][c]{\textwidth}  $\diag{ \bul \ar@/^0.4pc/@{-}[r] & \bul & \circ & \circ}$\end{varwidth}}
 \savebox{\pudelk}{\begin{varwidth}[t][0.7cm][c]{\textwidth}  $\diag{ \circ & \bul \ar@/^0.4pc/@{-}[r]& \bul & \circ}$\end{varwidth}}
 \savebox{\pudelko}{\begin{varwidth}[t][0.7cm][c]{\textwidth} $\diag{ \circ & \circ & \bul \ar@/^0.4pc/@{-}[r] & \bul}$\end{varwidth}}
\[ \xymatrix{
& & \usebox{\pu} \ar@{-}[lld] \ar@{-}[ld]  \ar@{-}[d] \ar@{-}[rd] \ar@{-}[rrd] \\
\usebox{\pud}  &
\usebox{\pude} &
\usebox{\pudel} &
\usebox{\pudelk} &
\usebox{\pudelko}
} \] }
\item generators involved in the construction of the appropriate dots or arcs are marked in black
and are called the \emph{used} generators, while the other
generators are called \emph{unused},
  \item{in the next steps we construct the subsequent \emph{levels} of
  $D$, in each step adding, as vertices of $D$, more complicated diagrams constructed according
  to the following rules $\bigoplus$ and $\bigodot$.}
\end{itemize}

\noindent Rules $\bigoplus$:

\begin{itemize}
  \item{in every diagram each generator can be used at most once,}
  \item{if a diagram has $k$ unused generators, we connect to it, as vertices of $D$, all
  diagrams obtained by adding a dot$_k$ or an arc$_k$
   (for these $k$ generators), in a way allowed by the remaining rules,}
  \item{if in a diagram there is an arc using one of the extreme generators $a_1$ or $a_n$, then
  we do not connect any new vertices of $D$ to this vertex and we call such an arc an \emph{extreme arc},
and the corresponding vertex -- a \emph{leaf} of $D$, }
  \item an arc$_k$ (for some $k$ unused generators) can be added only \emph{above}, which means that
  this arc connects the two generators that are neighbors of some used generators.
  (As we shall see in Remark~\ref{oD}, in every step of the construction used generators
  have indices ranging from $j$ to $j+i$ for some $j>0$, $i \geq 0$, so that the two neighboring
  generators are well defined). We get a diagram of the form

  \[ \diag{
\circ & \ldots & \circ & \bul \ar@/^1.3pc/@{-}[rr] & \
{\substack{n-k \text{ used}\\ \text{generators}}} \ & \bul & \circ
& \ldots & \circ } \] We denote $i$ subsequent used generators by
$<i>$, so that the above diagram is simply written as
\[ \diag{
\circ & \ldots & \circ & \bul \ar@/^1pc/@{-}[rr] & <n-k> & \bul &
\circ & \ldots & \circ } \]
  \item if in a given step we do not add to some diagram an arc above, and this diagram is not
  a leaf of $D$ then we have to add a dot obeying rules $\bigodot$.
\end{itemize}

\noindent Rules $\bigodot$:

\begin{itemize}
\item{after an arc$_k$ a dot$_{k-2}$ can only follow next to this arc,
  in other words, after the diagram whose last step of construction was an arc$_k$
\[ \diag{ \circ & \ldots & \circ & \bul \ar@/^1pc/@{-}[rr] & <n-k> & \bul & \circ & \ldots &
\circ } \] we can either have the diagram

\[ \diag{ \circ & \ldots & \circ & \bul \ar@/^1.5pc/@{-}[rrrr] & \bul \ar@/^1pc/@{-}[rr] &
<n-k> & \bul & \bul & \circ & \ldots & \circ } \] or one of the
following two diagrams
\[ \diag{ \circ & \ldots & \circ & \bul \ar@/^1pc/@{-}[rr] & <n-k> & \bul & \bullet & \circ &
\ldots & \circ } \ \ \ \ \ \text{or} \ \ \ \ \
 \diag{ \circ & \ldots & \circ & \bul & \bul \ar@/^1pc/@{-}[rr] & <n-k> & \bullet & \circ &
 \ldots & \circ } \]
}
\item{after a dot$_k$, for $k<n$, the next dot$_{k-1}$ can occur only as a neighbor
of the former dot; in other words, after a diagram
\[ \diag{ \circ & \ldots & \circ & <n-k> & \bullet & \circ & \ldots & \circ} \]
whose last step of construction was the indicated dot$_k$, either
a diagram of the following form can follow
\[ \diag{ \circ & \ldots & \circ & \bul
\ar@/^1pc/@{-}[rrr] & <n-k> & \bullet & \bul & \circ & \ldots &
\circ} \] or the following diagram can follow
\[ \diag{ \circ & \ldots & \circ & <n-k> & \bullet & \bullet & \circ & \ldots & \circ} \]
}
\item immediately after a dot in the first level of $D$ only an arc$_{n-1}$ above can be
added, so after a diagram
\[ \diag{ \circ & \ldots & \circ & \bullet & \circ & \ldots & \circ } \]
 the following diagram can only follow
\[ \diag{ \circ & \ldots & \circ & \bul \ar@/^0.4pc/@{-}[rr] & \bullet & \bul & \circ
& \ldots & \circ } \]
\end{itemize}
\end{df}

\begin{prz}
The following diagrams are vertices of some trees $D$ (for $n=15$
and $9$, respectively)
\[\diag{ \circ & \circ & \circ & \circ &
\circ & \circ & \circ & \bullet \ar@/^0.8pc/@{-}[rrrrrr] & \bullet
\ar@/^0.4pc/@{-}[rr] & \bullet & \bullet & \bullet & \bullet &
\bullet & \circ } \ \ \ \ \ \ \ \ \ \ \ \ \ \ \diag{ \circ & \circ
& \circ & \bul & \bul & \bullet \ar@/^0.4pc/@{-}[r] &  \bullet &
\circ & \circ }
\]
\end{prz}

\begin{uw}
\label{oD} The tree $D$ is finite. In every step of the above
construction the used generators have indices $j, \ldots, j+i$ for
some $i \geq 0$, $j> 0$. The order in which all dots and arcs were
added can be uniquely determined from the form of a given diagram.
The generators $a_1$ or $a_n$ can only be used as elements of an
arc, and such an arc is an extreme arc. A leaf of $D$ is a vertex
in which an extreme arc has appeared.
\end{uw}

\begin{df}
\label{defl} \emph{A branch} in $D$ is a chain of connected
vertices, leading from the root to some vertex $d$. If $d$ is a
leaf then such a branch is called \emph{maximal}.

If a vertex $d_2$ was connected to a vertex $d_1$ in the process
of construction of $D$, then $d_2$ is called a \emph{descendant}
of the vertex $d_1$.
\end{df}

\begin{przy}
\label{przykD} The following diagrams are leaves of $D$

\[\diag{
\circ & \circ & \circ & \bullet \ar@/^1.2pc/@{-}[rrrrrrrrrrr] &
\bullet & \bullet & \bullet &
 \bullet \ar@/^0.8pc/@{-}[rrrrrr] & \bullet \ar@/^0.4pc/@{-}[rr] & \bullet & \bullet &
 \bullet & \bullet & \bullet & \bullet } \ \ \ \ \ \ \ \ \ \ \ \ \ \
\diag{ \bullet \ar@/^1.2pc/@{-}[rrrrrrrr] & \bullet
\ar@/^0.8pc/@{-}[rrrrrr] & \bul & \bul & \bul & \bullet
\ar@/^0.4pc/@{-}[r] &  \bullet & \bullet & \bul } \] For $n=3$ the
tree $D$ has the form
\newbox{\sk}
\newbox{\skr}
\newbox{\skrz}
\newbox{\skrzy}
\savebox{\pu}{\begin{varwidth}[t][0.5cm][c]{\textwidth} $\diag{
\circ & \circ & \circ }$\end{varwidth}}
\savebox{\pud}{\begin{varwidth}[t][0.5cm][c]{\textwidth}  $\diag{
\circ & \bul & \circ }$\end{varwidth}}
\savebox{\pude}{\begin{varwidth}[t][0.5cm][c]{\textwidth}  $\diag{
\bul \ar@/^0.4pc/@{-}[r] & \bul & \circ }$\end{varwidth}}
\savebox{\pudel}{\begin{varwidth}[t][0.5cm][c]{\textwidth}
$\diag{ \circ & \bul \ar@/^0.4pc/@{-}[r]& \bul }$\end{varwidth}}
\savebox{\pudelk}{\begin{varwidth}[t][0.5cm][c]{\textwidth}
$\diag{ \bul \ar@/^0.4pc/@{-}[rr] & \bul & \bul }$\end{varwidth}}
\[ \xymatrix{
 & \usebox{\pu}  \ar@{-}[ld]  \ar@{-}[d] \ar@{-}[rd] \\
\usebox{\pud} \ar@{-}[d] &
\usebox{\pude} &
\usebox{\pudel} \\
\usebox{\pudelk}  } \]
while for $n=4$ the tree $D$ has the form
\newbox{\pudl}
\newbox{\pudlo}
\newbox{\skrzynia}
 \savebox{\pu}{\begin{varwidth}[t][0.7cm][c]{\textwidth} $\diag{ \circ & \circ & \circ & \circ}$\end{varwidth}}
 \savebox{\pud}{\begin{varwidth}[t][0.7cm][c]{\textwidth}  $\diag{ \circ & \bul & \circ & \circ}$\end{varwidth}}
 \savebox{\pude}{\begin{varwidth}[t][0.7cm][c]{\textwidth}  $\diag{ \circ & \circ & \bul & \circ}$\end{varwidth}}
 \savebox{\pudel}{\begin{varwidth}[t][0.7cm][c]{\textwidth}  $\diag{ \bul \ar@/^0.4pc/@{-}[r] & \bul & \circ & \circ}$\end{varwidth}}
 \savebox{\pudelk}{\begin{varwidth}[t][0.7cm][c]{\textwidth}  $\diag{ \circ & \bul \ar@/^0.4pc/@{-}[r]& \bul & \circ}$\end{varwidth}}
 \savebox{\pudelko}{\begin{varwidth}[t][0.7cm][c]{\textwidth} $\diag{ \circ & \circ & \bul \ar@/^0.4pc/@{-}[r] & \bul}$\end{varwidth}}
 \savebox{\pudl}{\begin{varwidth}[t][0.7cm][c]{\textwidth} $\diag{ \bul \ar@/^0.4pc/@{-}[rr] & \bul & \bul & \circ}$\end{varwidth}}
 \savebox{\pudlo}{\begin{varwidth}[t][0.7cm][c]{\textwidth} $\diag{ \circ & \bul \ar@/^0.4pc/@{-}[rr] & \bul & \bul } $ \end{varwidth}}
 \savebox{\skrzynia}{\begin{varwidth}[t][0.7cm][c]{\textwidth} $\diag{ \bul \ar@/^0.8pc/@{-}[rrr] & \bul \ar@/^0.4pc/@{-}[r] & \bul & \bul } $\end{varwidth}}
\[ \xymatrix{
& & \usebox{\pu} \ar@{-}[lld] \ar@{-}[ld]  \ar@{-}[d] \ar@{-}[rd] \ar@{-}[rrd] \\
\usebox{\pud} \ar@{-}[d] &
\usebox{\pude} \ar@{-}[d] &
\usebox{\pudel} &
\usebox{\pudelk} \ar@{-}[d] &
\usebox{\pudelko} \\
\usebox{\pudl} &
\usebox{\pudlo} & &
\usebox{\skrzynia}
} \]
\end{przy}

\subsection{Diagrams as congruences on $M$}
\label{s-interpr}

\

In this part we associate to every diagram $d\in D$ a
congruence $\rho (d)$ on $M$ and we show that if $d$ is a leaf of
$D$ then $\I_{\rho (d)}$ is a minimal prime ideal of $K[M]$.

\begin{ozn}
If $u<v$, by $M_{i_j}^{u,v}$ we denote the Chinese monoid with
$i_j$ generators $a_1,\ldots, a_{u-1}, a_{v+1}, \ldots, a_n$; so
that $i_j=n-v+u-1$. Sometimes we denote this monoid simply by
$M_{i_j}$, if it is clear from the context or inessential which of
the generators $a_1, \ldots, a_n$ are skipped. This generalizes
the notation used earlier: $M_{n-1}^s$ and $M_{n-2}^{s-1,s}$.
Indices $i_j$ will be helpful because we shall build sequences of
congruences $\rho_j$ for $j=0,1,2,\ldots$ and monoids $M_{i_j}$
corresponding to these congruences.

Recall that $\rho_0$ denotes  the trivial congruence on $M$. For a
congruence $\rho$ on $M$, by $M_{i_j}/\rho$ we mean
$M_{i_j}/(\rho|_{M_{i_j}})$.

For a given congruence $\rho_j$, let $\psi_j \colon M \rightarrow
M/\rho_j$ be the natural epimorphism. For every $x \in M$ we write
$\psi_j(x) = \wh{j}{x}$. In particular, for $x \in M_{i_j}$ by
$\wh{j}{x}$ we mean the image of $x$ in $M_{i_j}/\rho_j =
M_{i_j}/({\rho_j|_{M_{i_j}}})$. With this notation, $M/\rho_0=M$,
$\psi_0 = id$, $\wh{0}{x} = x$. If $\rho_1$ is a congruence of
type $\h$ or $\diam$ on $M$, then $\wh{1}{x} = \ol{x} = \psi(x)$,
where $\psi = \psi_1 \colon M \rar M/{\rho_1}$ is the natural
homomorphism.
\end{ozn}

\begin{df}
\label{St} We define inductively the following sequences of pairs
$(S_t, i_t)$ for $t \geq 1$, $i_t>0$. Let $i_0=n$ and
\[ \begin{cases}
S_1 = \N \\
i_1 = n-1
\end{cases}
\ \ \ \ \text{or} \ \ \ \
\begin{cases}
S_1 = B \times \Z \\
i_1 = n-2
\end{cases} \]
and for every $t>1$ let
\[ \begin{cases}
S_t = S_{t-1} \times \N \\
i_t = i_{t-1}-1
\end{cases}
\ \ \ \ \text{or} \ \ \ \
\begin{cases}
S_t = S_{t-1} \times B \times \Z \\
i_t = i_{t-1}-2.
\end{cases} \]
Every such sequence $(S_t, i_t)$ is clearly finite. In each of the
pairs, $S_t$ is a direct product of $n-i_t$ factors.

For example, $(\N, n-1)$, $(\N \times B \times \Z, n-3)$, $(\N
\times B \times \Z \times \N, n-4)$, $\ldots$ are initial elements
of a sequence of pairs.
\end{df}

\begin{konstr}
\label{interpr} With each of the diagrams $d\in D$ defined in
Part~\ref{s-ddD} we associate in a natural way a congruence $\rho
(d)$ on $M$ such that if a vertex $d'$ is a descendant of a vertex
$d$ in the tree $D$, then $\rho (d)\subseteq \rho (d')$.
\end{konstr}

\begin{proof}
We proceed by induction. We adopt an induction hypothesis
consisting of five parts and we immediately verify the validity of
the first inductive step.

\textbf{Part} \textbf{(I)}. Consider the diagrams described in
\ref{diagr1}. With the diagram $\ \diag{\circ & \ldots & \circ}\ $
we associate the trivial congruence $\rho_0$. With each
diagram $d$ from the first level of the tree $D$ (so a dot or an
arc) we associate a congruence $\rho_1$ of type $\h$ and $\diam$,
respectively, with an appropriate value of the distinguished index
$s$. We define $\rho (d)=\rho_{1}$. Clearly, all such $\rho_1$
satisfy $\rho_0 \ssq \rho_1$.

Hence, assume inductively that we already know congruences
corresponding to all diagrams up to the $j$-th level of the tree
$D$. For every diagram $d'$ from level $j+1$ we wish to
define a congruence $\rho (d')$. This diagram was constructed in
step $j+1$ of the construction of the tree $D$ from a diagram $d$
in level $j$, with which a congruence $\rho_j=\rho (d)$ is
associated, by adding an arc above or adding a dot on one of the
sides. So we assume that \textbf{(I)}: a diagram $d$ from level
$j$ is not a leaf of $D$ and that we already constructed a chain
of congruences $\rho_0 \ssq \rho_1 \ssq \rho_2 \ssq \ldots \ssq
\rho_j$ on $M$, corresponding to the branch of $D$ which leads to
the diagram $d'$ from level $j+1$.

\textbf{Part} \textbf{(II)}. Let $a_u, a_{u+1}, \ldots, a_v$ be
the generators used in our diagram $d$ from level $j$ (we know
that $u \neq 1$, $v \neq n$ and in view of \ref{oD} the used
generators form a connected segment). Consider
$M_{i_j}^{u,v}/\rho_j$.

For $j=0$ this is $M_n/\rho_0=M_n$. For $j=1$ we know from
\ref{Mkreski} that in case $\h$
\[M_{n-1}^s / \rho_1 = \ol{M_{n-1}^s} =
\raisebox{1ex}{$M_{n-1}^s$} \Big/
\raisebox{-1ex}{$(\substack{a_1,\ldots,a_{s-1}
\text{ commute} \\
a_{s+1},\ldots,a_n \text{ commute}})$},\] while in case $\diam$
\[M_{n-2}^{s-1,s} / \rho_1 =  \ol{M_{n-2}^{s-1,s}} = \raisebox{1ex}{$M_{n-2}^{s-1,s}$}
\Big/ \raisebox{-1ex}{$(\substack{a_1,\ldots,a_{s-2} \text{
commute} \\ a_{s+1},\ldots,a_n \text{ commute}})$}.\]

Hence, assume inductively that \textbf{(II)}: the congruence
$\rho_j$ is chosen in such a way that $M_{i_j}^{u,v}/\rho_j$ is a
Chinese monoid of rank $i_j$ with generators $\wh{j}{a}_1, \ldots,
\wh{j}{a}_{u-1}, \wh{j}{a}_{v+1}, \ldots, \wh{j}{a}_n$, and with
additional relations making the monoid $\langle \wh{j}{a}_1,
\ldots, \wh{j}{a}_{u-1}\rangle $ free commutative and making
$\langle \wh{j}{a}_{v+1}, \ldots, \wh{j}{a}_n\rangle $ free
commutative.

\textbf{Part} \textbf{(III)}. Assume that for every diagram from
any level $t \leq j$, which has some number $i_t$ of unused
generators, there is an associated pair $(S_t, i_t)$, in
accordance with Definition~\ref{St}. For $t=0$, so for the root of
$D$, we have $i_0=0$, while $S_0$ is not defined. With diagrams of
the first level of the tree $D$, so for $t=1$, we associate the
pairs $(S_1, i_1)$ as in Definition~\ref{St}.

\textbf{Part} \textbf{(IV)}. By Corollary~\ref{wnKier} and
Lemma~\ref{Mkaro} we have, for $\rho_1$ of type $\h$ and $\diam$,
respectively, an epimorphism $\what{\psi}_{\h}$ or a homomorphism
$\what{\psi}_{\diam}$, such that
\[(\h) \
\ \  M \rightarrow M/{\rho_1} \simeq \ol{M_{n-1}^s} \times \la
\ol{a_s} \ra \simeq \ol{M_{n-1}^s} \times \N \simeq
M_{n-1}^s/\rho_1 \times \N,\]
\[(\diam) \ \ \ M \rightarrow M/{\rho_1} \hookrightarrow M/{\rho_1} \,
\la(\ol{a_sa_{s-1}})^{-1}\ra \simeq \ol{M_{n-2}^{s-1,s}} \times B
\times \Z \simeq M_{n-2}^{s-1,s}/ \rho_1 \times B \times \Z,\]
with the embedding accomplished by the central localization with
respect to $\la \ol{a_s}\ol{a_{s-1}} \ra$.

More precisely, in $M_n$, for $n \geq 2$, we have $n-2$ possible
congruences $\rho_1$ of type $\h$, so also $n-2$ possible
epimorphisms $\what{\psi}_{\h}$, and also we have $n-1$ possible
congruences $\rho_1$ of type $\diam$, so also $n-1$ possible
homomorphisms $\what{\psi}_{\diam}$. These homomorphisms can be
associated with the corresponding branches in $D$, depending on
the value of $s$. For example, if $n=3$, we get the following
first level of the tree $D$:
\savebox{\pu}{\begin{varwidth}[t][0.5cm][c]{\textwidth} $\diag{
\circ & \circ & \circ }$\end{varwidth}}
\savebox{\pud}{\begin{varwidth}[t][0.5cm][c]{\textwidth}  $\diag{
\circ & \bul & \circ }$\end{varwidth}}
\savebox{\pude}{\begin{varwidth}[t][0.5cm][c]{\textwidth} \
\hskip3cm $\diag{ \bul \ar@/^0.4pc/@{-}[r] & \bul & \circ }$
\hskip3cm \ \end{varwidth}}
\savebox{\pudel}{\begin{varwidth}[t][0.5cm][c]{\textwidth}
$\diag{ \circ & \bul \ar@/^0.4pc/@{-}[r]& \bul }$\end{varwidth}}
\[ \xymatrix{
 & \usebox{\pu}  \ar@{-}_{\what{\psi}_{\h} \colon M \rightarrow \ol{M_2^2} \times \N \ \ }[ld]
 \ar@{-}|(0.65){\what{\psi}_{\diam} \colon M \rightarrow \ol{M_1^{1,2}}\times B \times \Z}[d]
 \ar@{-}^{\ \ \ \ \what{\psi}_{\diam} \colon M \rightarrow \ol{M_1^{2,3}}\times B \times \Z}[rd] \\
\usebox{\pud} &
\usebox{\pude} &
\usebox{\pudel} } \]

Let $f$ be a diagram from the branch of $D$ leading to the
considered diagram $d$ from level $j+1$ (for which we want to
construct $\rho_{j+1}$). Assume that $f$ is from level $t + 1 \leq
j$ in $D$ and it was created from  a diagram from level $t<j$, in
which there are $0 \leq i_t \leq n$ unused generators, and the
used generators have indices $u_t,\ldots ,v_t$. The value of $i_t$
can be different for different diagrams from level $t$, see
Definition~\ref{St}.

If the diagram $f$ was created by adding a dot then assume
inductively that \textbf{(IV~$\h$)}: there exists an epimorphism
\[\wh{t}{\psi}_{\h} \colon M_{i_t}^{u_t,v_t}/\rho_t \rar
M_{i_t-1}/{\rho_{t+1}} \times \N\] given by
\[\begin{cases}
\wh{t}{\psi}_{\h}(\wh{t}{a}_{s}) = (1, g_s) \\
\wh{t}{\psi}_{\h}(\wh{t}{a}_{l}) = (\wh{t+1}{a}_{l}, 1) \ \ \ \ \
\textup{for $l \neq s$,}
\end{cases}\]
where $\la g_s \ra \simeq \N$ and  $s=u_t-1$ or $s=v_t+1$,
depending on which of the two possible dots was added. We
associate this epimorphism with the edge of the tree $D$ which is
used to add the diagram $f$.

If $f$ was created by adding an arc, then assume inductively that
\textbf{(IV~$\diam$)}: there exists a homomorphism
\[\wh{t}{\psi}_{\diam} \colon M_{i_t}^{u_t,v_t}/\rho_t
\rar M_{i_t-2}^{u_t-1, v_t+1}/{\rho_{t+1}} \times B \times \Z\]
given by
\[\begin{cases}
\wh{t}{\psi}_{\diam}(\wh{t}{a}_{u-1}) = (1, p, g) \\
\wh{t}{\psi}_{\diam}(\wh{t}{a}_{v+1}) = (1, q,1)  \\
\wh{t}{\psi}_{\diam}(\wh{t}{a}_{l}) = (\wh{t+1}{a}_{l}, p, 1) \ \ \ \ \ \textup{for $l < u_t-1$}\\
\wh{t}{\psi}_{\diam}(\wh{t}{a}_{l}) = (\wh{t+1}{a}_{l}, q, 1) \ \
\ \ \ \textup{for $l > v_t+1$},
\end{cases}\]
where $\Z \simeq \la g, g^{-1} \ra$. We associate this
homomorphism with the edge of the tree $D$, which was used to add
the diagram $f$.

Notice that for $t=0$ the corresponding homomorphisms are
$\wh{0}{\psi}_{\h}=\what{\psi}_{\h}$ and $\wh{0}{\psi}_{\diam} =
\what{\psi}_{\diam}$.

\textbf{Part} \textbf{(V)}. Define for $t<j$ and for
$\triangle=\h$ or $\triangle=\diam$ the map
$\wh{t}{\kappa}_{\triangle}$ by $\wh{0}{\kappa}_\triangle =
\what{\psi}_\triangle$ and for  $0<t<j$
\[\wh{t}{\kappa}_{\triangle} \colon M_{i_t}/{\rho_t} \times S_t \rar M_{i_{t+1}}/{\rho_{t+1}}
\times S_{t+1}, \ \ \ \ \ \wh{t}{\kappa}_\triangle =
(\wh{t}{\psi}_\triangle, id).\] By the induction hypothesis (IV)
applied to $\wh{t}{\psi}_\triangle$ we know that for every edge in
$D$ one of the maps $\wh{t}{\kappa}_{\h}$ or
$\wh{t}{\kappa}_{\diam}$ exists and $\wh{t}{\kappa}_{\h}$ is an
epimorphism. Each map $\wh{t}{\kappa}_\triangle =
(\wh{t}{\psi}_\triangle, id)$ we associate with the edge in $D$
with which the corresponding $\wh{t}{\psi}_\triangle$ is
associated.

Consider the branch of $D$ leading from the root to a diagram $f$
from level $t+1$. The subsequent edges of this branch correspond
to some homomorphisms $\wh{0}{\kappa}_*, \wh{1}{\kappa}_*, \ldots,
\wh{t}{\kappa}_*$, where each $*$ denotes $\h$ or $\diam$. For
$\triangle$ equal to $\h$ or $\diam$, we define a homomorphism
\[\wh{t}{\kappa'}_\triangle \colon M \rar M_{i_{t+1}}/{\rho_{t+1}}
\times S_{t+1}\] as the composition
\[\wh{t}{\kappa'}_\triangle = \wh{t}{\kappa}_\triangle \circ \wh{t-1}{\kappa}_* \circ
\ldots \circ \wh{1}{\kappa}_* \circ \wh{0}{\kappa}_*.\] For $t=0$
we have $\wh{0}{\kappa'}_\triangle = \wh{0}{\kappa}_\triangle$,
while for $0<t<j$ we have
\[\wh{t}{\kappa'}_\triangle =
\wh{t}{\kappa}_\triangle \circ (\wh{t-1}{\kappa}_\nabla \circ
\wh{t-2}{\kappa}_*  \circ \ldots \wh{1}{\kappa}_* \circ
\wh{0}{\kappa}_*) = \wh{t}{\kappa}_\triangle \circ
\wh{t-1}{\kappa'}_\nabla,\]
\[\wh{t}{\kappa'}_\triangle \colon M \nad{\wh{t-1}{\kappa'}_\nabla}{\rightarrow}
M_{i_t}^{u,v}/\rho_t \times S_t
\nad{\wh{t}{\kappa}_\triangle}{\rightarrow} M_{i_{t+1}}/\rho_t
\times S_{t+1},\] where each $*$ denotes $\h$ or $\diam$ and
$\nabla = \h$ or $\nabla = \diam$. The map
$\wh{t}{\kappa'}_\triangle$ is an epimorphism if and only if all
$*$ are equal to $\h$ and $\nabla = \h$. However, by the
construction of $D$ we cannot simultaneously have
$\wh{0}{\kappa}_* = \wh{0}{\kappa}_\h$ and $\wh{1}{\kappa}_* =
\wh{1}{\kappa}_\h$. Hence $\wh{t}{\kappa'}_\triangle$ is an
epimorphism only for $t=0$ and $\triangle =\h$.

In cases where the index $\triangle$ is not important we simply
write $\wh{t}{\psi}$ or $\wh{t}{\kappa}$ or $\wh{t}{\kappa'}$,
respectively.

Assume inductively that \textbf{(V)}: for $t+1 \leq j$ we have
$\rho_{t+1}= ker(\wh{t}{\kappa'})$. For $t=0$, since
$\wh{0}{\kappa'} = \wh{0}{\kappa} = \wh{0}{\psi}$, we have
$\rho_1=ker (\psi)$, which agrees with the definition of $\rho_1$.

Next, we define a congruence $\rho_{j+1}$ and we verify that it
satisfies the inductive claim, so conditions (I)-(V) are
satisfied. We may assume that the considered diagram $d'$ from
level $j+1$ was constructed from a diagram $d$ from level $j$ by
adding a dot $a_{u-1}$ (the proof for a dot $a_{v+1}$ is similar)
or an arc $a_{v+1}a_{u-1}$. In the former case, we define a map
\[\wh{j}{\psi}_\h \colon
M_{i_j}^{u,v}/\rho_j \rightarrow M_{i_j-1}/\rho_j \times \N\] as
the natural extension of the homomorphism defined on generators as
follows:
\[\begin{cases}
\wh{j}{\psi}_{\h}(\wh{j}{a}_{u-1})=(1,g_{u-1}) \\
\wh{j}{\psi}_{\h}(\wh{j}{a}_l) = (\wh{j}{a}_l,1) \ \ \text{ for }
l \neq u-1,
\end{cases}\]
where $\la g_{u-1}\ra \simeq \N$, and in the latter case as the
homomorphism
\[\wh{j}{\psi}_\diam \colon M_{i_j}^{u,v}/\rho_j \rightarrow M_{i_{j-2}}/\rho_j \times B \times \Z\]
naturally extending:
\[\begin{cases}
\wh{j}{\psi}_{\diam}(\wh{j}{a}_{u-1})=(1,p,g) \\
\wh{j}{\psi}_{\diam}(\wh{j}{a}_{v+1})=(1,q,1) \\
\wh{j}{\psi}_{\diam}(\wh{j}{a}_l) = (\wh{j}{a}_l,p,1) \ \ \textup{for} \ l < u-1   \\
\wh{j}{\psi}_{\diam}(\wh{j}{a}_l) = (\wh{j}{a}_l,q,1) \ \
\textup{for} \ l > v+1.
\end{cases}\]
Both maps are homomorphisms because they are homomorphisms on each
of the components. Moreover $\wh{j}{\psi}_{\h}$ is an epimorphism
and $\wh{j}{\psi}_{\diam}$ is not an epimorphism. This is verified
in the same way as for $\widehat{\psi}_\h$ and
$\widehat{\psi}_\diam$.

Let
\[ \begin{cases}
S_{j+1} = S_j \times \la g_{u-1} \ra \simeq S_j \times \N \\
i_{j+1}=i_j-1
\end{cases} \]
in case $\h$ and
\[ \begin{cases}
S_{j+1}= S_j \times B \times \Z \\
i_{j+1}=i_j-2
\end{cases} \]
in case $\diam$. Then $i_{j+1}$ so defined coincides with the
number of used generators in the diagram. Moreover, the pairs
$(S_{j+1}, i_j)$ defined in this way satisfy conditions of
Definition~\ref{St}. This completes \textbf{the proof of part
(III)} of the inductive claim.

Define the homomorphism
\[\wh{j}{\kappa}_\triangle \colon
M_{i_j}^{u,v}/\rho_j \times S_j \rightarrow M_{i_{j+1}}/\rho_j
\times S_{j+1} \ \ \ \ \textup{by} \ \ \ \
\wh{j}{\kappa}_\triangle = (\wh{j}{\psi}_\triangle, id),\] so
$\wh{j}{\kappa'}_\triangle$ are defined in the same way as
$\wh{t}{\kappa'}_\triangle$ for $t<j$.

Similarly as for $\wh{t}{\kappa'}$, let $\wh{j}{\kappa'}_\triangle
\colon M \rightarrow M_{i_{j+1}}/\rho_{j+1} \times S_{j+1}$ be the
homomorphism defined by $\wh{j}{\kappa'}_\triangle =
\wh{j}{\kappa}_\triangle \circ \wh{j-1}{\kappa}_*$. The
homomorphism $\wh{j}{\kappa'}$ corresponds to a congruence
${ker(\wh{j}{\kappa'})}$ on $M$. We define
$\rho_{j+1}={ker(\wh{j}{\kappa'})}$. We will show that
$\rho_{j+1}$ satisfies the inductive claim.

By the inductive hypothesis we know that $\rho_t \ssq \rho_{t+1}$
for $0<t<j$. Also, for $0<t \leq j$ we have $\wh{t}{\kappa'} =
\wh{t}{\kappa} \circ \wh{t-1}{\kappa'}$, so that
$ker(\wh{t-1}{\kappa'}) \ssq ker(\wh{t}{\kappa'})$. Thus, by the
inductive hypothesis (V) and by the definition of $\rho_{j+1}$ we
get
\[\rho_j = ker(\wh{j-1}{\kappa'}) \ssq ker(\wh{j}{\kappa'}) = \rho_{j+1},\]
so that $\rho_j \ssq \rho_{j+1}$. This completes \textbf{the proof
of part (I)} of the inductive claim.

Now we will show that $\rho_j|_{M_{i_{j+1}}} \supseteq
\rho_{j+1}|_{M_{i_{j+1}}}$. Assume that for some $x,y \in
M_{i_{j+1}}$ we have $(x,y) \in \rho_{j+1}|_{M_{i_{j+1}}}$.
Similarly as above, by the definition of $\rho_{j+1}$ this means
that $(x,y) \in {ker(\wh{j}{\kappa'})}$, so that
$\wh{j}{\kappa'}(x) = \wh{j}{\kappa'}(y)$. By the definition of
$\wh{j}{\kappa'} \colon M \rar M_{i_{j+1}}/\rho_j \times S_{j+1}$,
the latter implies that the first components (belonging to
$M_{i_{j+1}}/\rho_j$) of elements $\wh{j}{\kappa'}(x)$ and
$\wh{j}{\kappa'}(y)$ are equal, so that the images of $x,y$ in
$M_{i_{j+1}}/\rho_j$ are equal. Hence $(x,y) \in
\rho_j|_{M_{i_{j+1}}}$. So indeed we have $\rho_j|_{M_{i_{j+1}}}
\supseteq \rho_{j+1}|_{M_{i_{j+1}}}$, as desired.

Therefore, in view of the opposite inclusion proved before, we get
$\rho_j|_{M_{i_{j+1}}} = \rho_{j+1}|_{M_{i_{j+1}}}$, whence also
\[M_{i_{j+1}}/\rho_j = M_{i_{j+1}}/\rho_{j+1}.\] Thus
$M_{i_{j+1}}/\rho_{j+1}$ is the Chinese monoid on $i_{j+1}$
generators, with the same additional relations as
$M_{i_{j+1}}/\rho_j$, so with commutativity of the sets of
generators that are on the same side of the generators used
earlier. This completes \textbf{the proof of part (II)} of the
inductive claim.

Since $M_{i_{j+1}}/\rho_j = M_{i_{j+1}}/\rho_{j+1}$, it follows
that
\[\wh{j}{\psi}_\h \colon M_{i_j}^{u,v}/\rho_j \rightarrow M_{i_j-1}/\rho_j \times \N = M_{i_{j+1}}/\rho_j \times \N =
M_{i_{j+1}}/\rho_{j+1} \times \N,\]
\[\wh{j}{\psi}_\diam \colon M_{i_j}^{u,v}/\rho_j \rightarrow M_{i_j-2}/\rho_j \times B \times \Z =
M_{i_{j+1}}/\rho_j \times B \times \Z = M_{i_{j+1}}/\rho_{j+1}
\times B \times \Z.\] Moreover, for every generator $a_l$ of
$M_{i_{j+1}}$ we therefore have $\wh{j}{a}_l = \wh{j+1}{a}_l$. By
the above, and in view of the definition, $\wh{j}{\psi}$ satisfies
all the conditions of the inductive hypothesis for $\wh{t}{\psi}$
with $t<j$. This completes  \textbf{the proof of part (IV)} of the
inductive claim.

Hence, also $\wh{j}{\kappa'}$ satisfies all conditions that hold
by the assumption for $\wh{t}{\kappa'}$ with $t<j$. Moreover, by
the definition, $\rho_{j+1} = ker(\wh{j}{\kappa'})$. This
completes \textbf{the proof of part (V)} of the inductive claim.
Therefore, defining $\rho (d')=\rho_{j+1}$, we accomplish the
aims stated at the beginning of the inductive construction.
\end{proof}

\begin{ozn}
\label{oznphi} From now on we adopt the notation used in
Construction~\ref{interpr}. We know that $\rho_j \ssq \rho_{j+1}$
and $\psi_j$ and $\psi_{j+1}$ are epimorphisms. Hence there exists
a natural epimorphism $\varphi_j$ such that the diagram
\[ \xymatrix{
M \ar@{->>}^{\psi_j}[r]  \ar@{->>}[rd]_{\psi_{j+1}} & M/\rho_j  \ar^{\varphi_j}[d] \\
 & M/\rho_{j+1}
}\] commutes, that is $\psi_{j+1} = \varphi_j \circ \psi_j$.
\end{ozn}

\begin{lem}
\label{diagr-przem} For every $j$ there exists a natural embedding
\[\lambda_{j+1} \colon M/\rho_{j+1} \hookrightarrow M_{i_{j+1}} /
\rho_{j+1} \times S_{j+1}.\] Moreover, the following diagram
commutes
\[ \xymatrix{
M/\rho_j \ar@{^{(}->}^(0.4){\lambda_j}[r] \ar_{\varphi_j}[d] &  M_{i_j} / \rho_j \times S_j \ar^{\wh{j}{\kappa}}[d]\\
M/\rho_{j+1} \ar@{^{(}->}^(0.35){\lambda_{j+1}}[r] &  M_{i_{j+1}}
/ \rho_{j+1} \times S_{j+1} }\] \end{lem}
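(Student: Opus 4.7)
The plan is to obtain $\lambda_{j+1}$ as the monoid homomorphism induced from the map $\widehat{j}{\kappa'}$ of Construction \ref{interpr}, using the identity $\rho_{j+1} = \ker(\widehat{j}{\kappa'})$ proved in part (V) of that construction. Since $\widehat{j}{\kappa'}\colon M \to M_{i_{j+1}}/\rho_{j+1}\times S_{j+1}$ kills exactly $\rho_{j+1}$, it factors through $\psi_{j+1}\colon M\twoheadrightarrow M/\rho_{j+1}$ and yields a canonical injection
\[
\lambda_{j+1}\colon M/\rho_{j+1} \hookrightarrow M_{i_{j+1}}/\rho_{j+1}\times S_{j+1}, \qquad \lambda_{j+1}\circ\psi_{j+1}=\widehat{j}{\kappa'}.
\]
The injectivity is immediate from the definition of $\rho_{j+1}$ as a kernel.

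I would proceed by induction on $j$. For the base case $j=0$, the map $\lambda_1$ is already provided by the earlier results: in the $\h$ case Corollary \ref{wnKier} supplies an isomorphism $M/\rho_1\simeq \overline{M_{n-1}^s}\times\N$, while in the $\diam$ case Lemma \ref{Mkaro} gives an embedding $M/\rho_1\hookrightarrow \overline{M_{n-2}^{s-1,s}}\times B\times\Z$ via the central localization at $\overline{a_s}\,\overline{a_{s-1}}$. In both cases the identifications $M_{i_1}/\rho_1=\overline{M_{n-1}^s}$ or $\overline{M_{n-2}^{s-1,s}}$ from Notation \ref{Mkreski} show this is the required $\lambda_1$.

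For the commutativity of the square, I would simply chase identities. By induction, $\lambda_j\circ\psi_j=\widehat{j-1}{\kappa'}$. From Construction \ref{interpr} we have $\widehat{j}{\kappa'}=\widehat{j}{\kappa}\circ\widehat{j-1}{\kappa'}$, so
\[
\lambda_{j+1}\circ\psi_{j+1}=\widehat{j}{\kappa'}=\widehat{j}{\kappa}\circ\widehat{j-1}{\kappa'}=\widehat{j}{\kappa}\circ\lambda_j\circ\psi_j.
\]
Combining this with the relation $\psi_{j+1}=\varphi_j\circ\psi_j$ from Notation \ref{oznphi} gives
\[
\lambda_{j+1}\circ\varphi_j\circ\psi_j=\widehat{j}{\kappa}\circ\lambda_j\circ\psi_j,
\]
and since $\psi_j$ is an epimorphism it can be cancelled on the right, yielding $\lambda_{j+1}\circ\varphi_j=\widehat{j}{\kappa}\circ\lambda_j$, which is the asserted commutativity.

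The only genuinely delicate point is checking that the target $M_{i_{j+1}}/\rho_{j+1}\times S_{j+1}$ of $\lambda_{j+1}$ matches the target of $\widehat{j}{\kappa'}$ after the identifications used in the inductive step; but this is precisely the content of parts (II) and (IV) of Construction \ref{interpr}, where it is shown that $\rho_j|_{M_{i_{j+1}}}=\rho_{j+1}|_{M_{i_{j+1}}}$ and hence $M_{i_{j+1}}/\rho_j=M_{i_{j+1}}/\rho_{j+1}$. Once these identifications are invoked, the construction of $\lambda_{j+1}$ and the commutativity of the square reduce to bookkeeping with the homomorphisms already defined.
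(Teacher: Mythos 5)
Your proof is correct and follows essentially the same route as the paper: both obtain $\lambda_{j+1}$ as the canonical injection induced by $\widehat{j}{\kappa'}$ from the identity $\rho_{j+1}=\ker(\widehat{j}{\kappa'})$ established in Construction~\ref{interpr}, handle the base case via Lemma~\ref{Mkier} (resp.\ Lemma~\ref{Mkaro}), and derive the commuting square from $\lambda_j\circ\psi_j=\widehat{j-1}{\kappa'}$, $\widehat{j}{\kappa'}=\widehat{j}{\kappa}\circ\widehat{j-1}{\kappa'}$, and $\psi_{j+1}=\varphi_j\circ\psi_j$. Your cancellation of the epimorphism $\psi_j$ on the right is just a cleaner rephrasing of the paper's computation using a preimage $\psi_j^{-1}(x)$.
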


\begin{proof}
First, consider the case $j=1$. If $\rho_1$ is of type $\h$ then
Lemma~\ref{Mkier} yields an isomorphism $M/\rho_1 \simeq
M_{i_1}/\rho_1 \times S_1$, which we denote by $\lambda_1$. We
know that the diagram
\[ \xymatrix{
M \ar_{\psi}[d] \ar^{\wh{0}{\kappa'}=\what{\psi}}[rd]\\
M/\rho_1
\ar@{}|(0.4){\mbox{\normalsize{$\simeq$}}}_(0.4){\lambda_1}[r]  &
M_{i_1} / \rho_1 \times S_1 }\] commutes.

If $\rho_1$ is of type $\diam$, then by the proof of
Lemma~\ref{Mkaro} we have an embedding $\lambda \colon M/\rho_1
\hrar M_{i_1}/\rho_1 \times S_1$, which we denote by $\lambda_1$.
As in case $\h$ we know that the diagram
\[ \xymatrix{
M \ar_{\psi}[d] \ar^{\wh{0}{\kappa'}=\what{\psi}}[rd]\\
M/\rho_1 \ar@{^{(}->}^(0.4){\lambda_1}[r]  &  M_{i_1} / \rho_1
\times S_1 }\] commutes.

For $j>1$, by the inductive construction of $\wh{j}{\kappa'}$ in
\ref{interpr}, we get $Im(\wh{j}{\kappa'}) \ssq
M_{i_{j+1}}/\rho_{j+1} \times S_{j+1}$. Since
$\rho_{j+1}={ker(\wh{j}{\kappa'})}$, we thus get the desired
natural embedding \[\lambda_{j+1} \colon M/\rho_{j+1}
\hookrightarrow  M_{i_{j+1}}/\rho_{j+1} \times S_{j+1}.\]
Therefore the diagram
\[ \xymatrix{
M \ar_{\psi_{j+1}}[d] \ar^{\wh{j}{\kappa'}}[rd]\\
M/\rho_{j+1} \ar@{^{(}->}^(0.35){\lambda_{j+1}}[r]  &  M_{i_{j+1}}
/ \rho_{j+1} \times S_{j+1} }\] commutes.

Since congruences from higher levels of the tree $D$ satisfy
analogous conditions, for every $j>0$ we get a commuting diagram
\[ \xymatrix{
M \ar_{\psi_j}[d] \ar^{\wh{j-1}{\kappa'}}[rd]\\
M/\rho_j \ar@{^{(}->}^(0.4){\lambda_j}[r]  &  M_{i_j} / \rho_j
\times S_j. }\] Hence $\wh{j-1}{\kappa'} = \lambda_j \circ
\psi_j$. Thus $\wh{j-1}{\kappa'} \circ \psi_j^{-1}(x) =
\lambda_j(x)$ for every $x \in M/\rho_j$.

By the definition of $\wh{j}{\kappa'}$ we have $\wh{j}{\kappa'} =
\wh{j}{\kappa} \circ \wh{j-1}{\kappa'}$. Moreover, by the
definition of $\varphi_j$, we have $\psi_{j+1} = \varphi_j \circ
\psi_j$. Hence, for every $x \in M/\rho_j$ and its preimage
$\psi_j^{-1}(x) \ssq M$, it follows that $\psi_{j+1} (
\psi_j^{-1}(x)) = \varphi_j(x).$ All the above easily leads to
\begin{align*}
\lambda_{j+1} \circ \varphi_j (x)= (\lambda_{j+1} \circ
\psi_{j+1}) ( \psi_j^{-1} (x))=
\wh{j}{\kappa'} ( \psi_j^{-1} (x))= \\
= (\wh{j}{\kappa} \circ \wh{j-1}{\kappa'}) ( \psi_j^{-1} (x))=
\wh{j}{\kappa} \circ \lambda_j (x),
\end{align*}
which establishes the assertion.
\end{proof}

\begin{ozn}
\label{Al} For a fixed diagram $d$ in $D$, let $A_l$ be the
submonoid of $M$ generated by all products $a_ya_x$, corresponding
to arcs built in this diagram up to the $l$-th step (inclusive) of
the construction of $d$. In case $\h$ for $l=1$ (where there are
no arcs), we define $A_1 = \{1\}$.
\end{ozn}

\begin{ozn}
\label{utozs}
  For simplicity, we sometimes identify
  $M/\rho_l$ with $\lambda_l(M/\rho_l)$ and we identify
  $(M/\rho_l) \cdot (\wh{l}{A}_l)^{-1}$ with
  $\lambda_l(M/\rho_l) \cdot (\lambda_l(\wh{l}{A}_l))^{-1}$.
\end{ozn}

\begin{lem}
\label{mozna-lokal} With notation as in \ref{Al}, for every $l>0$
the elements $\wh{l}{a}_y\wh{l}{a}_x$ are central and regular in
$M/\rho_l$. Moreover $(M/\rho_l) \cdot (\wh{l}{A}_l)^{-1} \ssq
M_{i_l}/\rho_l \times S_l$ (identifying $M/\rho_l$ with
$\lambda_l(M/\rho_l)$).
\end{lem}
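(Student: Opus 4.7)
The plan is to prove all three assertions simultaneously by induction on $l$, transferring the problem via the embedding $\lambda_l$ of Lemma~\ref{diagr-przem} to the ambient product $M_{i_l}/\rho_l \times S_l$, where the images of arc-products can be read off explicitly from the formulas defining $\wh{t}{\psi}_\h$ and $\wh{t}{\psi}_\diam$ in Construction~\ref{interpr}. The point is that each arc is designed, at the moment it is introduced, to be sent by $\lambda_l$ to an element that is $1$ in every coordinate except the $\Z$-factor (of some $B\times \Z$ block of $S_l$) specifically attached to that arc, where it equals the generator $g$. Such elements are manifestly central and invertible in the ambient product, so centrality, regularity and the localization inclusion will all be immediate in the ambient, and then pulled back to $M/\rho_l$ through the embedding.

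For the base case $l=1$, I would split according to the type of $\rho_1$. If $\rho_1$ is of $\h$ type then $A_1 = \{1\}$ and the centrality/regularity statements are vacuous, while the inclusion is the isomorphism of Lemma~\ref{Mkier}. If $\rho_1$ is of $\diam$ type with distinguished generators $a_{s-1},a_s$, then $A_1$ is generated by $a_sa_{s-1}$; centrality is Remark~\ref{asas-1centr}, regularity is Lemma~\ref{asas-1reg}, and the localization inclusion is precisely Lemma~\ref{Mkaro}.

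For the inductive step, assume the three claims at level $l$. An old arc $\wh{l}{a}_y\wh{l}{a}_x$ with $\lambda_l(\wh{l}{a}_y\wh{l}{a}_x) = (1, 1, \ldots, g, \ldots, 1)$ (the $g$ sitting in the $\Z$-factor attached to that arc) remains such an element after applying $\wh{l}{\kappa} = (\wh{l}{\psi}, id)$, since $\wh{l}{\kappa}$ is the identity on $S_l$ and $\wh{l}{\psi}(1)=1$. The commuting square of Lemma~\ref{diagr-przem} then gives
\[\lambda_{l+1}(\wh{l+1}{a}_y\wh{l+1}{a}_x) = \wh{l}{\kappa}(\lambda_l(\wh{l}{a}_y\wh{l}{a}_x)),\]
which is still central and invertible in $M_{i_{l+1}}/\rho_{l+1}\times S_{l+1}$; hence $\wh{l+1}{a}_y\wh{l+1}{a}_x$ is central in $M/\rho_{l+1}$, and it is regular there because $\lambda_{l+1}$ is an embedding into a monoid where the image is invertible. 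This covers every element of $\wh{l+1}{A}_{l+1}$ in the case that step $l+1$ is a dot (so $A_{l+1}=A_l$).

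If step $l+1$ adds an arc over $a_{v+1},a_{u-1}$, it remains to verify the claim for the newly created arc. Substituting the formulas
\[\wh{l}{\psi}_\diam(\wh{l}{a}_{v+1}) = (1,q,1),\qquad \wh{l}{\psi}_\diam(\wh{l}{a}_{u-1}) = (1,p,g)\]
of Construction~\ref{interpr} and using $qp=1$ in $B$ yields
\[\lambda_{l+1}(\wh{l+1}{a}_{v+1}\wh{l+1}{a}_{u-1}) = (1,\ldots,1,1,g) \in M_{i_{l+1}}/\rho_{l+1}\times S_l\times B\times \Z,\]
which is central and has invertible final coordinate in the ambient product. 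The inclusion $(M/\rho_{l+1})\cdot(\wh{l+1}{A}_{l+1})^{-1}\ssq M_{i_{l+1}}/\rho_{l+1}\times S_{l+1}$ then follows by the universal property of localization, since every generator of $\wh{l+1}{A}_{l+1}$ is sent by $\lambda_{l+1}$ to an invertible element of the ambient product. The main obstacle is not conceptual but bookkeeping: one must keep track of which $B\times \Z$ factor of $S_l$ is attached to each arc, and ensure that the identity action of $\wh{l}{\kappa}$ on $S_l$ really does preserve the coordinate shape of all old arcs when the new factors are appended. The inductive construction of $(S_t,i_t)$ in Definition~\ref{St} makes this automatic, since each arc is assigned its own $B\times\Z$ block and no collisions occur.
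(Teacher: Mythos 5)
Your proposal is correct and takes essentially the same approach as the paper: trace the image of each arc product $a_ya_x$ through the composite of the $\wh{t}{\kappa}$ maps, observe that the result in $M_{i_l}/\rho_l\times S_l$ has the form $(1,\ldots,1,g,1,\ldots,1)$ (hence is central and invertible in the ambient product), and pull centrality and regularity back through the embedding $\lambda_l$; the paper does this by a direct forward computation while you phrase it as an induction, but the content is identical. The one piece of the ``bookkeeping'' you should be careful not to dismiss is the verification that the $S_l$-component of $\lambda_l(\wh{l}{a}_{v+1}\wh{l}{a}_{u-1})$ is all $1$s when a new arc is added: this is not an automatic consequence of arcs owning disjoint $B\times\Z$ blocks, but rather follows because $a_{u-1}$ and $a_{v+1}$ sit on opposite sides of the used generators at every earlier $\diam$-step, so they acquire $p$ and $q$ in exactly the same $B$-factors, which then cancel as $qp=1$ coordinatewise --- this is precisely the $[1,p]/[1,q]$ tracking the paper carries out.
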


\begin{proof}
We know that $\wh{l}{A}_l = \psi_l (A_l) \ssq M/\rho_l$. From
Lemma~\ref{diagr-przem} we have an embedding $\lambda_l \colon
M/\rho_l \hrar M_{i_l}/\rho_l \times S_l$. We will consider the
images of elements of $A_l$ in $M_{i_l}/\rho_l \times S_l$ under
the map $\lambda_l \circ \psi_l = \wh{l-1}{\kappa'} \colon M \rar
M_{i_l}/\rho_l \times S_l$.

Assume that in some step $k+1<l$ of the construction, an arc
$a_ya_x$ is built, where $k \geq 0$. We study the images of the
generators up to this step $k+1$.

Consider all the steps of the construction, from step one till
step $k$. By the last part of the proof of part (IV) of the
inductive claim in Construction~\ref{interpr} we know that for
every $t$, in step $t+1$ we have (depending on the case,
respectively)
\[ \begin{cases}
  \wh{t}{\psi}_{\h}(\wh{t}{a}_x) = (\wh{t+1}{a}_x,1) \\
  \wh{t}{\psi}_{\h}(\wh{t}{a}_y)  = (\wh{t+1}{a}_y,1)
\end{cases}
\ \ \ \ \ \text{or} \ \ \ \ \
\begin{cases}
  \wh{t}{\psi}_{\diam}(\wh{t}{a}_x) = (\wh{t+1}{a}_x,p,1) \\
  \wh{t}{\psi}_{\diam}(\wh{t}{a}_y) = (\wh{t+1}{a}_y,q,1).
\end{cases}\]
This follows from the definition of the maps
$\wh{t}{\psi}_\triangle$ and from the fact that in step $k$ an arc
$a_ya_x$ is built, so in the previous steps generators with
indices between $x$ and $y$ are used.

This implies that in step $k$ the images of the generators $a_x,
a_y \in M$ under $\wh{k-1}{\kappa'}$ have the form
\[\begin{cases}
  \wh{k-1}{\kappa'}(a_x) = (\wh{k}{a}_x,[1,p]) \\
  \wh{k-1}{\kappa'}(a_y) = (\wh{k}{a}_y,[1,q]),
\end{cases} \]
where $\wh{k}{a}_x, \wh{k}{a}_y \in M_{i_k}^{x+1,y-1}/\rho_k$, and
$[1,p],[1,q] \in S_k=\N^c \times (B \times \Z)^d$ denote sequences
of length $c+2d$ consisting of $(c+d)$ elements $1$ and $d$
elements $p$ or (respectively) $(c+d)$ elements $1$ and $d$
elements $q$, and $p$ in $[1,p]$ occurs in exactly the same places
as $q$ in $[1,q]$.

In step $(k+1)$ an arc $a_ya_x$ is built, so according to the
definition of $\wh{k}{\psi}_{\diam}$ we get
\[\begin{cases}
\wh{k}{\psi}_{\diam}(\wh{k}{a}_x) = (1,p,g) \\
\wh{k}{\psi}_{\diam}(\wh{k}{a}_y)= (1,q,1),
\end{cases}\]
so that
\[\begin{cases}
  \wh{k}{\kappa'}_{\diam}(a_x) = \wh{k}{\kappa}_{\diam} \circ \wh{k-1}{\kappa'}(a_x) =
  \wh{k}{\kappa}_{\diam}(\wh{k}{a}_x,[1,p]) =
  (\wh{k}{\psi}_{\diam}(\wh{k}{a}_x), [1,p]) = (1,p,g,[1,p]) \\
  \wh{k}{\kappa'}_{\diam}(a_y) = (1,q,1,[1,q]) \ \ \ \ \ \textup{(analogously)}.
\end{cases}\]
In the next steps of the construction, from step $k+2$ till step
$j$, elements $1$ occurring as the first components of the above
images of $a_x$ and $a_y$ yield in the image $(1,1)$ in case $\h$
and $(1,1,1)$ in case $\diam$, respectively. More precisely, since
$\wh{t}{\psi}_{\h}$ and $\wh{t}{\psi}_{\diam}$, are homomorphisms,
for the element $1 \in M_{i_t}/\rho_t$ we get the equalities
\[\begin{cases}
  \wh{t}{\psi}_{\h}(1) = (1,1) \in M_{i_t-1}/\rho_{t+1} \times \N\\
  \wh{t}{\psi}_{\diam}(1) = (1,1,1)\in M_{i_t-2}/\rho_{t+1} \times B \times
  \Z ,
\end{cases}\]
respectively. Hence, for every $z \in S_t$ we get (respectively)
\[\begin{cases}
  \wh{t}{\kappa}_{\h}(1,z) = (1,1,z) \\
  \wh{t}{\kappa}_{\diam}(1,z) = (1,1,1,z) ,
\end{cases}\]
respectively. Therefore, in step $l$ of the construction, with
$l>k+1$ we get
\[\begin{cases}
  \wh{l-1}{\kappa'}(a_x) = (\wh{l}{a}_x,[1,p])=(1,\ldots,1,1,p,g,[1,p]) \\
  \wh{l-1}{\kappa'}(a_y) = (\wh{l}{a}_y,[1,q])=(1,\ldots,1,1,q,1,[1,q]).
\end{cases} \]
Since $p$  occurs in $[1,p]$ in the same components as $q$ occurs
in $[1,q]$, it follows that
\begin{multline*}
 \wh{l-1}{\kappa'}(a_ya_x) = \wh{l-1}{\kappa'}(a_y)\wh{l-1}{\kappa'}(a_x) =
 (\wh{l}{a}_y\wh{l}{a}_x,[1,qp])= \\
 = (1, \ldots,1,1,qp,g,[1,qp]) = (1,\ldots,1,g,1,\ldots,1) \in M_{i_l}/\rho_l \times S_l.
\end{multline*}
Moreover, the above implies that $g$ occurs in the components $\N$
occurring in $S_l$.

Thus in step $l$ for $l>k+1$ the image of the element $a_ya_x$
(corresponding to any previously built arc) in $M_{i_l}/{\rho_l}
\times S_l$ is of the form $(\wh{l}{a}_y\wh{l}{a}_x,
[1,1])=(1,\ldots,1,g, 1, \ldots, 1)$. This is a central element.
It is also invertible in $M_{i_l}/{\rho_l} \times S_l$. So it is
also central and regular in $M/\rho_l \ssq M_{i_l}/{\rho_l} \times
S_l$ (where $M/\rho_l$ is identified with $\lambda_l(M/\rho_l)$).

In particular, we may consider the localization $(M/\rho_l)
\cdot(\wh{l}A_l)^{-1}$ with respect to the submonoid generated by
all such elements. Moreover, with identifications as in
\ref{utozs}, since we have inclusions $M/\rho_l \ssq
M_{i_l}/{\rho_l} \times S_l$ and $(\wh{l}A_l)^{-1} \ssq
M_{i_l}/{\rho_l} \times S_l$, we also get $(M/\rho_l) \cdot
(\wh{l}{A}_l)^{-1} \ssq M_{i_l}/\rho_l \times S_l$. This completes
the proof.
\end{proof}

\begin{df}
By the \emph{middle of a diagram} $d\in D$ we mean

  -- the first generator used in $d$ as a dot, if the construction of $d$ starts with a dot,

  -- the middle of the first arc, if the construction of $d$ starts with an arc.

We say that a generator $a_{j}$ is on the \emph{left} (\emph{right},
respectively) \emph{of the middle of $d$} if it is located so in the
graphical presentation of the diagram $d$.
\end{df}

For further convenience, we state some basic observations
used in the proof of Lemma~\ref{mozna-lokal} and some related facts
 easily obtained in a similar way.

\begin{uw}
\label{lewoprawo} Let $\rho = \rho (d)$ be the congruence on $M$
corresponding to a diagram $d\in D$. Say, $d$ is at level $l$ of
the tree $D$ and $\rho =\rho_{l}$, where $\rho_{1}\subseteq \cdots
\subseteq \rho_{l}$ is a chain of congruences used in the process
of building $\rho$, described in Construction~\ref{interpr}. As in
the proof of Lemma~\ref{mozna-lokal}, $M/\rho_l $ is viewed as a
submonoid of $M_{i_l}/\rho \times S_l,$ where $S_l$ is the product
of some copies of  $B\times \Z$ and some copies of $\N$. We
consider the images of the generators of $M$ in $M/\rho \subseteq
M_{i_l}/\rho \times S_{l}$. Then:

-- for every component $B$ in $S_l$ the image of a generator
$a_{j}$ used in $d$ has one of the elements $p,q$ in this
component,

-- for every component $B$ in $S_l$ there exist generators $a_i$,
$a_j$, whose images in this component are equal to $p$ and $q$,
respectively; hence, if the image in $M/\rho$ of some $w\in M$ has
$p^t$ or $q^t$ for some $t>0$ in this component, then it is not
central in $M/\rho$,

-- if $a_{j}$ is the middle of $d$ then the image of the generator
$a_{j}$ is of the form $(1, \ldots ,1, g_{j})$,

-- if $a_{j}$ is a used generator in $d$ that is on the left from
the middle of $d$ then the image of $a_{j}$ does not have
components equal to $q\in B$ and it has at most one component
equal to $g$ (recall that $\la g, g^{-1} \ra \simeq \Z$; this
happens if $a_{j}$ was used in an arc in $d$) or $g_j$ (if $a_{j}$
was used as a dot; in this case $\la g_j \ra \simeq \N$),

-- if $a_{j}$ is a used generator in $d$ that is on the right of
the middle of $d$ then the image of $a_{j}$ in $M/\rho$ does not
have components equal to $p\in B$ and at most one component $g_j$
can occur (if the generator $a_{j}$ was used as a dot).
\end{uw}

The following is an extension of Lemma~\ref{Mkier} and
Lemma~\ref{Mkaro}.

\begin{stw}
\label{centrlok} With identifications as in \ref{utozs}, we have
\[(M/\rho_l) (\wh{l}{A}_l)^{-1} =  M_{i_l} / \rho_l \times S_l.\]
\end{stw}

\begin{proof}
By Lemma~\ref{mozna-lokal} we know that $(M/\rho_l)
(\wh{l}{A}_l)^{-1} \ssq M_{i_l} / \rho_l \times S_l.$

For $\rho_1$ of type $\h$ we have $A_1=\emptyset$ by the
definition, and $i_1=n-1$, $S_1=\N$, so that the claim takes the
form $M/\rho_1 \simeq M_{n-1} / \rho_1 \times \N$, which holds by
Lemma~\ref{Mkier}. For $\rho_1$ of type $\diam$, the set $A_1$
consists of one element $a_sa_{s-1}$, and $i_1=n-2$, $S_1=B \times
\Z$, so the claim takes the form $(M/\rho_1) \cdot \la
(\ol{a_s}\ol{a_{s-1}})\ra^{-1} = M_{n-2} / \rho_1 \times B \times
\Z$, which follows from Lemma~\ref{Mkaro}.

Assume by induction that the claim holds for all congruences
$\rho$ corresponding to diagrams in level $j\geq 1$ of the tree
$D$. Let $d$ be an arbitrary diagram from level $j+1$. Let
$\rho_{j+1}=\rho (d)$. Let $e\in D$ be the diagram of level $j$
which was used in the construction of $d$ and let $\rho_{j}=\rho
(e)$. Then the inductive hypothesis says that
\[(M/\rho_j) \cdot (\wh{j}{A}_j)^{-1}  =  M_{i_j} / \rho_j \times
S_j\] and the inductive claim takes the form $(M/\rho_{j+1}) \cdot
(\wh{j+1}{A}_{j+1})^{-1} = M_{i_{j+1}} / \rho_{j+1} \times
S_{j+1}.$

Consider the commuting diagram (see Lemma~\ref{diagr-przem})
\[ \xymatrix{
M/\rho_j \ar@{^{(}->}^(0.4){\lambda_j}[r] \ar_{\varphi_j}[d] &
M_{i_j} /
\rho_j \times S_j \ar^{\wh{j}{\kappa}}[d]\\
M/\rho_{j+1} \ar@{^{(}->}^(0.35){\lambda_{j+1}}[r] &  M_{i_{j+1}}
/ \rho_{j+1} \times S_{j+1} }  \tag{\#}\] By
Lemma~\ref{mozna-lokal} the map $\lambda_{j+1}$ yields an
embedding $(M/\rho_{j+1})(\wh{j+1}{A}_{j+1})^{-1} \hrar
M_{i_{j+1}} / \rho_{j+1} \times S_{j+1}$. All elements
$\lambda_j(\wh{j}{A}_j)^{-1}$ are invertible in $M_{i_j} / \rho_j
\times S_j$, by the last part of the proof of
Lemma~\ref{mozna-lokal}. Let for any $i = 1,2,\ldots$, the
homomorphism $\lambda'_i$ be the unique extension of $\lambda_i$
to the localization $(M/\rho_{j+1})(\wh{j+1}{A}_{j+1})^{-1}$.
Consider the diagram
\[ \xymatrix{
(M/\rho_j)(\wh{j}{A}_j)^{-1}
\ar@{}|{\mbox{\normalsize{$=$}}}^{\lambda'_j}[r]
\ar_{\varphi'_j}[d] &  M_{i_j} / \rho_j \times S_j \ar^{\wh{j}{\kappa}}[d]\\
(M/\rho_{j+1})(\wh{j+1}{A}_{j+1})^{-1}
\ar@{^{(}->}^{\lambda'_{j+1}}[r] & M_{i_{j+1}} / \rho_{j+1} \times
S_{j+1} } \tag{\#\#}\] where $\varphi'_j$ is the natural extension
of $\varphi_j$. We know that $\varphi_j$ is an epimorphism.

We will show that for $\rho_j$ of type $\h$ the map $\varphi'_j$
also is an epimorphism. We have $A_j=A_{j+1}$, because in case
$\h$ there is no new arc added. Consider the image $\wh{j}{A}_j
\ssq M/\rho_j$ under the map $\varphi_j \colon M/\rho_j
\rightarrow M/\rho_{j+1}$. We get $\wh{j+1}{A}_{j} \ssq
M/\rho_{j+1}$, and the elements of $\wh{j+1}{A}_{j}$ are central
in $M/\rho_{j+1}$ (because so are the elements of $\wh{j}{A}_j$ in
$M/\rho_j$). Moreover, we know that $\wh{j+1}{A}_j =
\wh{j+1}{A}_{j+1}$, because $A_j=A_{j+1}$, so that the image of
$\wh{j}{A}_j \ssq M/\rho_j$ under $\varphi_j$ is equal to
$\wh{j+1}{A}_{j+1}$. The image of $M/{\rho_j}$ is equal to
$M/{\rho_{j+1}}$, whence indeed $\varphi'_j$ is an epimorphism.

We know that the diagram $(\#)$ commutes. Thus, we have
\[(\lam'_{j+1} \varphi'_j)|_{M/\rho_j} = \lam_{j+1} \varphi_j = \wh{j}{\ka} \lam_j
= (\wh{j}{\ka} \lam'_j)|_{M/\rho_j}.\] Under this map, the images
of elements of $\wh{j}{A}_j$ are invertible in $M_{i_{j+1}} /
\rho_{j+1} \times S_{j+1}$, because $\varphi_j(\wh{j}{A}_j) =
\wh{j+1}{A}_j \ssq \wh{j+1}{A}_{j+1}$ and we know from the last
part of the proof of Lemma~\ref{mozna-lokal} that the elements
$\lambda_{j+1}((\wh{j+1}{A}_{j+1})^{-1})$ are invertible in
$M_{i_{j+1}} / \rho_{j+1} \times S_{j+1}$. Hence, there exists a
unique extension to the localization
$(M/\rho_j)(\wh{j}{A}_j)^{-1}$. It is equal to  $\lam'_{j+1}
\varphi'_j$ and also equal to $\wh{j}{\ka} \lam'_j$, so that
$\lam'_{j+1} \varphi'_j = \wh{j}{\ka} \lam'_j$. In other words,
the diagram $(\#\#)$ commutes.

We know that in case $\h$ the maps $\wh{j}{\kappa}$, $\lambda'_j$
and $\varphi'_j$ are epimorphisms. Hence, $\lam'_{j+1} \varphi'_j
= \wh{j}{\ka} \lam'_j$ is an epimorphism, so that the embedding
$\lam'_j$ is an epimorphism. Thus, in case $\h$, $\lam'_j$ is an
isomorphism. Then we get $(M/\rho_{j+1})(\wh{j+1}{A}_{j+1})^{-1} =
M_{i_{j+1}} / \rho_{j+1} \times S_{j+1}$, as desired. This
completes the inductive step in case $\h$.

In case $\diam$, $\wh{j}{\kappa}$ and $\varphi'_j$ are not
epimorphisms and $A_j \varsubsetneq A_{j+1}$. We have
$Im(\varphi'_j) = (M/\rho_{j+1})(\wh{j+1}{A}_j)^{-1}$. It also
follows that $Im(\wh{j}{\kappa}) \ssq Im(\lambda'_{j+1})$, because
\begin{multline*}
Im(\wh{j}{\kappa}) = \lambda'_{j+1}(Im(\varphi'_j)) =
\lambda'_{j+1}((M/\rho_{j+1})(\wh{j+1}{A}_j)^{-1}) \ssq \\
\ssq \lambda'_{j+1}((M/\rho_{j+1})(\wh{j+1}{A}_{j+1})^{-1}) =
Im(\lambda'_{j+1}).
\end{multline*}
Since $\wh{j}{\kappa}|_{S_j}=id$, we also have
\[S_j = \wh{j}{\kappa}(S_j) \ssq Im(\wh{j}{\kappa}) \ssq Im(\lambda'_{j+1}) =
\lambda'_{j+1}((M/\rho_{j+1})(\wh{j+1}{A}_{j+1})^{-1}).\]

Consider $M^{u, v}_{i_j}/\rho_{j+1}=M_{i_j}/\rho_{j+1}$. This is a
Chinese monoid $M^{u, v}_{i_j}$ with the additional relations of
type $\diam$, so with relations corresponding to the arc $a_v
a_u$. Hence, we are in a case as in Lemma~\ref{Mkaro}, where
relations of type $\diam$ are imposed on the Chinese monoid $M$.
Moreover, notice that $\lambda'_{j+1}|_{M_{i_j}/\rho_{j+1}}$
coincides with the map $\lambda$ from Lemma~\ref{Mkaro}. Hence we
may apply Lemma~\ref{Mkaro} to $M_{i_j}/\rho_{j+1}$. We then get
$i_{j+1}=i_j-2$, and $M_{i_{j+1}}/\rho_{j+1}$ corresponds to
$\ol{M_{n-2}}$, and more generally
\[\lambda'_{j+1}\left((M_{i_j}/\rho_{j+1}) \la
(\wh{j}{a}_v \wh{j}{a}_u)^{-1} \ra \right) =
 M_{i_{j+1}}/\rho_{j+1} \times B \times \Z.\]
Since $M_{i_j}/\rho_{j+1} \ssq M/\rho_{j+1}$ and $\la (\wh{j}{a}_v
\wh{j}{a}_u)^{-1} \ra \ssq (\wh{j+1}{A}_{j+1})^{-1}$, we thus get
\begin{multline*}
M_{i_{j+1}}/\rho_{j+1} \times B \times \Z =
\lam'_{j+1}((M_{i_j}/\rho_{j+1}) \la (\wh{j}{a}_v \wh{j}{a}_u)^{-1} \ra ) \ssq \\
\ssq \lam'_{j+1}((M/\rho_{j+1}) (\wh{j+1}{A}_{j+1})^{-1}) =
Im(\lam'_{j+1}).
\end{multline*}
Moreover we know that $S_j \ssq Im(\lam'_{j+1})$. This leads to
\[M_{i_{j+1}}/\rho_{j+1} \times S_{j+1} =  M_{i_{j+1}}/\rho_{j+1} \times B
\times \Z \times S_j \ssq Im(\lam'_{j+1}).\] The opposite
inclusion holds by the assumption. Hence the embedding
$\lam'_{j+1}$ is an epimorphism, which implies that
\[\lam'_{j+1}((M/\rho_{j+1}) (\wh{j+1}{A}_{j+1})^{-1}) = M_{i_{j+1}}/\rho_{j+1} \times S_{j+1}.\]
Then, as in case $\h$, $\lam'_{j+1}$ is an isomorphism, so that
$(M/\rho_{j+1}) (\wh{j+1}{A}_{j+1})^{-1} = M_{i_{j+1}}/\rho_{j+1}
\times S_{j+1}.$ This completes the inductive step in case
$\diam$, proving the assertion.
\end{proof}

Notice that Construction~\ref{interpr} assigns ideals of the
form $\mc{I}_{\rho (d)} \lhd K[M]$ to the vertices $d$ of the tree
$D$.

\begin{tw}
\label{liscieidpier} Under the correspondence described in
Construction~\ref{interpr}, every ideal $\mc{I}_{\rho (d)}$
of the algebra $K[M]$ corresponding to a leaf $d\in D$ is prime.
\end{tw}

\begin{proof}
By Proposition~\ref{centrlok}, for every leaf $d$ from level
$r$ of $D$ and for the corresponding congruence $\rho (d) =\rho_r$
on $M$, we have
\[(M/\rho_r) (\wh{r}{A}_r)^{-1} \simeq M_{i_r}/\rho_r \times S_r,\]
and $S_r$ is of the form $\N^k \times (B\times \Z)^m$ for some
exponents $k,m$.

If the extreme arc occurring in the diagram $d$ does not join
generators $a_{1},a_{n}$ then there are $i_r>0$ unused generators.
The congruence $\rho_r$ introduces the commutativity of these
remaining generators, so that $M_{i_{r}}/\rho_{r} \simeq
\N^{i_r}$. Hence
\[M_{i_r}/\rho_r \times S_r \simeq \N^{i_r} \times \N^k \times (B \times \Z)^m \simeq \N^k \times (B \times \Z)^m.\]

On the other hand, if the extreme arc joins $a_{1}$ and $a_{n}$
then $i_r=0$ and
\[M_{i_r}/\rho_r \times S_r \simeq M_0/\rho_r \times S_r
\simeq S_r \simeq \N^k \times (B \times \Z)^m.\] Therefore, we
have
\[K[M/\rho_r] (\wh{r}{A}_r)^{-1} \simeq K[(M/\rho_r)
(\wh{r}{A}_r)^{-1}] \simeq K[\N^k \times (B\times \Z)^m].\] It is
well known that, for every field $L$, the algebra $L[B]$ is
primitive, see \cite{TYLam}. From \cite{Krempa} it then follows
that $R[B]$ is prime for every prime algebra $R$. This easily
implies that $K[\N^k \times (B\times \Z)^m]$ is prime. Thus
$K[M/\rho_r](\wh{r}{A}_r)^{-1}$ is prime. Since it is a central
localization of $K[M/\rho_r]$, also $K[M/\rho_r] \simeq
K[M]/\I_{\rho_r}$ is prime. The assertion follows.
\end{proof}

\begin{lem}
\label{osobnegal} If $\rho = \rho (d)$ and $\rho'=\rho (d')$ are
congruences corresponding to diagrams $d$ and $d'$, and $d, d'$
are in different branches of $D$, then $\I_\rho \nsubseteq
\I_{\rho'}$.
\end{lem}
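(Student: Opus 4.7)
The plan is to exhibit a pair $(s,t)\in \rho\setminus\rho'$; since both congruences are homogeneous and $\I_{\rho'}$ is the $K$-span of $\{s'-t'\colon (s',t')\in \rho'\}$, such a pair yields $s-t\in \I_\rho\setminus \I_{\rho'}$, which is the required non-inclusion. To locate it, let $e\in D$ be the last common ancestor of $d$ and $d'$, and let $c, c'$ be the two distinct children of $e$ on the branches leading to $d, d'$ respectively. Since $\rho(c)\ssq \rho(d)=\rho$ by Construction~\ref{interpr}, it suffices to find a pair in $\rho(c)\setminus \rho(d')$.

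Let $[u,v]$ be the interval of used generators of $e$; because $e$ is not a leaf, $u\geq 2$ and $v\leq n-1$. By the rules of construction of $D$, each of $c$ and $c'$ is one of three possible moves: adding the arc joining $a_{u-1}$ and $a_{v+1}$, adding the dot at $a_{u-1}$, or adding the dot at $a_{v+1}$. In the arc case the element $\ol{a_{v+1}a_{u-1}}$ becomes central in $M/\rho(c)$ by Remark~\ref{asas-1centr}; in either dot case the newly used generator becomes central by Remark~\ref{ascentr}. Let $x\in M$ be chosen so that $\ol{x}$ is this new central element of $M/\rho(c)$, and pick a generator $a_j$ witnessing that $\ol{x}$ fails to be central in $M/\rho(c')$. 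The existence of such $a_j$ follows from Proposition~\ref{centrlok} combined with Remark~\ref{lewoprawo}: the move taken by $c'$ differs from the move taken by $c$, so at the step of $c'$ a new factor is introduced in $S_{l+1}$ ($l$ being the level of $e$) in which the image of $\ol{x}$ carries a symbol $p$, $q$ or $g_\star$ unmatched by $1$ in the image of some generator. The pair $(a_j x,\, x a_j)$ then belongs to $\rho(c)$.

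To verify that $(a_j x, x a_j)\notin \rho(d')$, I would apply Proposition~\ref{centrlok} again to embed $M/\rho(d')$ into the product $M_{i_l}/\rho(d')\times S_l$, where $l$ now denotes the level of $d'$. Remark~\ref{lewoprawo} describes precisely the images of $a_j$ and of $x$ in this product: they still carry the incompatible entries introduced at the step of $c'$, because every further move below $c'$ in the $d'$-branch only appends a new $\N$- or $B\times\Z$-factor to $S$ and, by the inductive structure of Construction~\ref{interpr}, acts as the identity on the already-existing components of images of generators not involved in the given move. Hence $\ol{a_jx}\neq \ol{xa_j}$ in $M/\rho(d')$, so $(a_jx, xa_j)\in \rho(c)\setminus \rho(d')\ssq \rho(d)\setminus \rho(d')$, as desired.

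The main obstacle is the bookkeeping of the three possible sibling configurations (arc vs.\ left dot, arc vs.\ right dot, left dot vs.\ right dot) and the precise identification of the image of $a_j$ and $x$ in the factor $S_l$. In each configuration the argument follows the same template, with an explicit witness $a_j$ selected on the side of $[u,v]$ opposite to the generator used by $c'$.
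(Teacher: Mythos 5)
The overall strategy you adopt — reduce to finding a pair in $\rho(c)\setminus\rho(d')$ where $c$ is the child of the last common ancestor on the $d$-side, and detect this via a centrality argument using the embedding into $M_{i_l}/\rho_l\times S_l$ from Lemma~\ref{diagr-przem} and the component bookkeeping of Remark~\ref{lewoprawo} — is indeed the approach taken by the paper. However, your execution has a genuine gap in the dot case that the paper's explicit case analysis is precisely designed to avoid.

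The gap: you claim that if $c$ adds a dot at $a_{u-1}$ (or $a_{v+1}$), then "the newly used generator becomes central by Remark~\ref{ascentr}." Remark~\ref{ascentr} concerns a congruence of $\h$ type on $M$ itself, i.e.\ the situation at the \emph{first} level of $D$. At a deeper level $l\geq 1$, the congruence $\rho(c)$ is not of $\h$ type; the image of the new dot generator $a_{u-1}$ in $M/\rho(c)\ssq M_{i_{l+1}}/\rho(c)\times S_{l+1}$ is of the form $(1,\,g_{u-1},\,[1,p])$, which — by the second bullet of Remark~\ref{lewoprawo} — is \emph{not} central whenever $[1,p]$ contains an actual $p$ in some $B$-component (which happens as soon as an arc has been built before level $l+1$). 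So your witness $x=a_{u-1}$ fails, and with it the claim $(a_jx,xa_j)\in\rho(c)$. The paper's proof circumvents this by choosing a \emph{product} as the witness: in case (2b) it is $a_{y-1}a_x$ (the new dot times the opposite endpoint of the arc in $e$), whose image has the form $(1,\ldots,1,g,g_x,[1,1])$ after the $p$'s and $q$'s cancel pairwise, and is genuinely central. The witness choice is therefore not the cosmetic bookkeeping issue you flag at the end; it is where the actual work lies.

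Two further points. First, your scheme takes $[u,v]$ to be the used interval of $e$ and lists only the three moves adjacent to it, but this breaks down when $e$ is the root: there is no used block yet, and $c,c'$ can be any of $2n-3$ dots or arcs placed anywhere; this is why the paper treats the root case separately (cases (1a)–(1c)). Second, your persistence argument assumes the incompatible entry for $a_j$ versus $x$ already sits in the newly created $S_{l+1}$-factor at the step of $c'$; but in case (2a.1) of the paper the offending $p$-component of $a_x$ is created only at a \emph{later} step, when the arc $a_wa_x$ finally uses $a_x$ on the $d'$-branch. You cannot read off the obstruction at level $l+1$ alone; you must track the image all the way down to the level of $d'$, as the paper does.
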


\begin{proof}
  First, consider the case where the root of $D$ is the only vertex of $D$ that
  is contained in both branches leading from the root to $d$ and from the root to
  $d'$. This means that these diagrams start with

  (1a) two different dots: $d$ with $a_s$, and $d'$ with $a_t$, where $t \neq
  s$, or

  (1b) two different arcs: $d$ with $a_sa_{s-1}$, and $d'$ with $a_ta_{t-1}$, where $t \neq s$,
  or

  (1c) one of them, say $d$, starts with an arc $a_sa_{s-1}$, and $d'$ starts with a dot $a_t$.

  Clearly, the middle of $d$ is different than the middle of $d'$.

  In case (1a) the image of $a_s$ in $M/\rho$ is central.
 If $d'$ consists of a single dot $a_t$, then the image of $a_s$ in $M/\rho'$ is not central because
 it does not commute with the images of generators lying on the other side of $a_t$.
 Otherwise, in $d'$, directly after the initial dot $a_t$, according to the rules,
 the arc $a_{t+1}a_{t-1}$ was built. Hence, Remark~\ref{lewoprawo} implies that in $M/\rho'$
 in the images of all generators on the left of $a_t$ there is a component $p$, while on
 the right there is $q$. Since $a_s \neq  a_t$, some component of the image of $a_s$ is equal
 to $p$ or $q$, hence also in this case this image is not central in $M/\rho'$.
 Therefore $\rho \nsubseteq \rho'$.

 In the same way we see that the image of $a_t$ is central in $M/\rho'$, but it is not central in $M/\rho$,
  whence $\rho' \nsubseteq \rho$.
  This proves the assertion in case (1a).

 Similarly, in case (1b), the image of $a_sa_{s-1}$ is central in $M/\rho$.
 Assume, with no loss of generality, that $s>t$. Then, by
 Remark~\ref{lewoprawo}, in the images of $a_s$ and $a_{s-1}$ in $M/\rho'$ there are
 components equal to $q$ and there are no components
 equal to $p$, so the image of $a_sa_{s-1}$ is not central in $M/\rho'$.
 In the same way we see that the image of $a_ta_{t-1}$ is central
 in $M/\rho'$, but is not central in $M/\rho$.
 This yields the assertion in case (1b).

 In case (1c), similarly, assume that $s>t$.
 The image of $a_sa_{s-1}$ is central in $M/\rho$.
If $d'$ consists of the single dot $a_t$, then in $M/\rho'$ we
have the same relations as in $M$ and additionally the images of
$a_1, \ldots, a_t$ commute and the images of $a_t, \ldots, a_n$
commute. Therefore, the image of $a_sa_{s-1}$ is not central in
$M/\rho'$, because it does not commute with the image of $a_{t-1}$
(since $a_t$ is a dot, we must have $t>1$).

On the other hand, if $d'$ is not a single dot, then as in case
(1a), in the diagram $d'$ directly after the initial dot $a_t$ the
arc $a_{t+1}a_{t-1}$ must have been built. Hence, by
Remark~\ref{lewoprawo}, in the image of $a_s$ in $M/\rho'$ one of
the components is equal to $q$, while in the image of $a_{s-1}$
there are no components equal to $p$. Hence, the image of
$a_sa_{s-1}$ is not central in $M/\rho'$.

Similarly, the  image of $a_t$ is central in $M/\rho'$, but it is
not central in $M/\rho$, because one of its components is equal to
$p$. Hence, again $\I_\rho \nsubseteq \I_{\rho'}$ and $\I_{\rho'}
\nsubseteq \I_\rho$.

This completes the proof in the case where the root of $D$ is the
only common vertex of the branches containing $d$ and $d'$.

Now, consider the opposite case. So, up to a certain step in the
construction of $D$ the diagrams $d$ and $d'$ are equal. Assume
that generators $a_{x+1}, \ldots, a_{y-1}$, where $1 < x+1 < y-1 <
n$, were used in this common part of the construction of $d$ and
$d'$ (so the diagram obtained in this step is not a leaf of $D$).
We may assume that $x+1 < y-1$, so the number of generators used
in the common part of the construction of $d$ and $d'$ exceeds
$1$, because if two diagrams start with the same dot then in both
of them the same arc must follow.

Hence, one of the following cases must occur.

(2a) In one of the diagrams, say in $d$, an arc $a_ya_x$ was
built, while in $d'$ a dot was built (say, to the right of the
previously used generators, so $a_y$, and then $y<n$).

(2b) In one of the diagrams, say in $d$, a dot $a_x$ was built on
one side, while in $d'$ a dot $a_{y}$ was built on the other side
 (then $x>1$ and $y<n$).

First, consider case (2a). Recall that \mbox{$<i>$} denotes $i$
consecutive used generators, so an initial step in the
construction of $d$ in this case looks like
\[ \diag{
\circ & \ldots & \circ & \podpis{\bul}{x} \ar@/^1pc/@{-}[rr] &
<y-x-1> & \podpis{\bul}{y} & \circ & \ldots & \circ } \] while an
initial step in the construction of $d'$ is of the form
 \[ \diag{
 \circ & \ldots & \circ & \podpis{\circ}{x} & <y-x-1> & \podpis{\bul}{y}
 & \circ & \ldots & \circ } \]
In $d'$, before the dot $a_y$, some dots might have been added on
the same side of the previously used generators, and before this
an arc  $a_za_{x+1}$ must had been added, for some $x+1< z < y$.
So the following diagram is an initial step in the construction of
$d'$
\[ \diag{
 \circ & \ldots & \circ & \podpis{\circ}{x} &  \podpis{\bul}{x+1} \ar@/^1pc/@{-}[rr] & <z-x-1> &
\podpis{\bul}{z} & \ldots & \podpis{\bul}{y} & \circ & \ldots &
\circ } \] In the next steps of the construction of $d'$, after
the dot $a_y$ some dots might have been added on the same side as
previously used generators, which was followed by one of the
following two steps.

(2a.1) An arc $a_wa_x$ was added for some $w >y$ (and perhaps the
construction of $d'$ was not complete yet). So an initial step of
the construction of $d'$ is of the form

\[ \diag{
 \circ & \ldots & \circ & \podpis{\bul}{x} \ar@/^1.5pc/@{-}[rrrrrrr]
 & \podpis{\bul}{x+1} \ar@/^1pc/@{-}[rr] & <z-x-1> & \podpis{\bul}{z} & \ldots & \podpis{\bul}{y} & \ldots & \podpis{\bul}{w} &\circ & \ldots & \circ } \]

(2a.2) The construction of $d'$ was completed, so $d'$ is of the
form
\[ \diag{
 \circ & \ldots & \circ & \podpis{\circ}{x}
 & \podpis{\bul}{x+1} \ar@/^1pc/@{-}[rr] & <z-x-1> & \podpis{\bul}{z}& \ldots & \podpis{\bul}{y} & \ldots & \bul &\circ & \ldots & \circ } \]

Since the generators $a_{x+1}, \ldots, a_{y-1}$ are in both
diagrams $d$ and $d'$ used in the same way, an initial step of the
construction of $d$ must be of the form

\[ \diag{
 \circ & \ldots & \circ & \podpis{\bul}{x} \ar@/^1.5pc/@{-}[rrrrr]
 & \podpis{\bul}{x+1}\ar@/^1pc/@{-}[rr] & <z-x-1> & \podpis{\bul}{z} & \ldots & \podpis{\bul}{y} & \circ & \ldots & \circ } \]
In $d$ the generator $a_{x+1}$ is not the middle, because if it
were the initial dot then an arc $a_{x+2}a_x$ would follow.
However, we know that $y \neq x+2$, because we assume that
$y-x-1>1$.

In view of Lemma~\ref{mozna-lokal}, the image of $a_ya_x$ is not
central in $M/\rho$.

First, consider case (2a.1). Since the construction of $\rho'$
involves a dot $a_y$, and at a later stage an arc $a_wa_x$, where
$x<y<w$, it follows that the image of $a_y$ in $M/{\rho'}$ is of
the form $(\ldots, 1,1, \ldots)$, while the image of $a_x$ is of
the form $(\ldots,p,g,\ldots)$, where the distinguished two
components $B \times \Z$ result from adding the arc $a_wa_x$ in
the construction of $M/\rho'$, and $\ldots$ denote the values of
the remaining components. The forms of these sequences are derived
as in the proof of Lemma~\ref{mozna-lokal}, by representing
$M/\rho'$ as a submonoid of an appropriate monoid $M_{i_l}/\rho'
\times S_l$, where $S_l$ is a direct product of some copies of $B
\times \Z$ and some copies of $\N$. Therefore, the image of
$a_ya_x$ is of the form $(\ldots,p,g,\ldots)$, whence -- because
of the component $p$ -- it is not central in $M/{\rho'}$. Hence,
in case (2a.1), $\rho \nsubseteq \rho'$.

Next, consider case (2a.2). Using an argument and notation as in
the proof of Lemma~\ref{mozna-lokal}, and applying
Remark~\ref{lewoprawo} and the fact that $a_y$ appears in $d'$ to
the right of the middle of $d'$, one can get a more detailed
description of the image of $a_y$ in $M/\rho'$ as
$(1,\ldots,1,g_y,[1,q])$, where $g_y$ appears in a component
corresponding to $\N$ in $M/{\rho'}$. Similarly, the image of
$a_x$ has the form $(\wh{l}{a}_x, 1, \ldots, 1, [1,p])$, where
elements $p$ in $[1,p]$ occur in the same components as elements
$q$ occur in $[1,q]$ in the image of $a_y$. Hence, the image of
$a_ya_x$ has the form $(\wh{l}{a}_x, 1, \ldots, 1, g_y, [1,1])$.
Moreover, $\wh{l}{a}_x \in M_{i_l}/\rho'$ does not commute with
$\wh{l}{a}_{y+1} \in M_{i_l}/\rho'$, so the image of $a_ya_x$ is
not central in $M/\rho'$. Therefore, in case (2a.2) we also have
$\rho \nsubseteq \rho'$.

Hence, in both subcases of case (2a) we get $\rho \nsubseteq
\rho'$ and in the rest of the proof we treat both these cases
together.

We claim that the image of $a_ya_{x+1}$ in $M/{\rho'}$ is central,
but its image in $M/\rho$ is not central, which will yield $\rho'
\nsubseteq \rho$. As above, we know that the image of $a_y$ in
$M/\rho'$ looks like $(1,\ldots,1,g_y,[1,q])$, where $g_y$ occurs
in a component corresponding to $\N$ in $M/{\rho'}$. On the other
hand, $a_{x+1}$ appears in $d'$ to the left of the middle of $d'$,
so in the image of $a_{x+1}$ the components are $1$, $p$ and a
single $g$ (in the component corresponding to an arc with the left
end in $a_{x+1}$). Moreover, the elements $p$ occur in the same
components in which the elements $q$ occur in the image of $a_y$.
Therefore all components of the image of $a_ya_{x+1}$ are equal to
$1$ except for a single component $g$ and a single component
$g_y$. Hence, this element is central in $M/{\rho'}$.

Similarly, the element $a_y$ in the diagram $d$ is to the right of
the middle of $d$, so the image of $a_{y}$ in $M/\rho$ has the
form $(1,\ldots,1,q,1,[1,q])$. As noticed before, the element
$a_{x+1}$ is in $d$ to the left of the middle of $d$, whence in
the image of $a_{x+1}$ in $M/\rho$ there are components $1$, $p$
and either a single component $g$, if the generator $a_{x+1}$ was
used in an arc, or a single $g_{x+1}$, if it was used as a dot.
Moreover, all components different than $1$ occur in the
components corresponding to the part of $d$, which was built
before the construction of the arc $a_ya_x$, so in some of the
components covered by $[1,q]$ in the image of $a_y$. The remaining
components of the image of $a_{x+1}$ are all equal to $1$. This
implies that the image of $a_ya_{x+1}$ is not central in $M/\rho$,
because $q$ is not central. Hence, we indeed get $\rho' \nsubseteq
\rho$, as desired.

It follows that $\I_\rho \nsubseteq \I_{\rho'}$ and $\I_{\rho'}
\nsubseteq \I_\rho$, which completes the proof in case (2a).

Finally, we deal with case (2b). This case can occur only if the
common initial part of $d$ and $d'$ is ,,covered" with an arc
(only in this case a dot can be added both on the left and on the
right of the arc). Hence, an initial part of the construction of
$d'$ has the form
\[ \diag{
 \circ & \ldots & \circ & \podpis{\circ}{x} &  \podpis{\bul}{x+1}
 \ar@/^1pc/@{-}[rr] & <y-x-2> &
 \podpis{\bul}{y-1} & \podpis{\bul}{y} & \circ & \ldots & \circ } \]
which is a special case of the diagram $d'$ described in case
(2a). So we know that the image of $a_ya_{x+1}$ is central in
$M/\rho'$. In this case
\[ \diag{
 \circ & \ldots & \circ & \podpis{\bul}{x} &  \podpis{\bul}{x+1}
 \ar@/^1pc/@{-}[rr] & <y-x-2> & \podpis{\bul}{y-1} & \podpis{\circ}{y} &
  \circ & \ldots & \circ } \]
is an initial step in the construction of the diagram $d$, whence
$a_{x+1}$ is on the left of the middle of $d$. Hence, as in case
(2a), one can show that the image of $a_ya_{x+1}$ is not central
in $M/\rho$. By symmetry, the image of $a_{y-1}a_x$ is central in
$M/\rho$, but is not central in $M/\rho'$. Then, again $\I_\rho
\nsubseteq \I_{\rho'}$ and $\I_{\rho'} \nsubseteq \I_\rho$, which
completes the proof in case (2b) and therefore the proof of the
lemma.
\end{proof}

\subsection{Minimal prime ideals as the leaves of $D$}
\label{s-minim}

\

In this part we first prove that every prime ideal of $K[M]$
contains an ideal $\I_{\rho (d)}$ corresponding to a leaf $d$ of
the tree $D$. In view of Construction~\ref{interpr}, this
strengthens the assertion of Theorem~\ref{idealy} (since $\I_\rho
\subseteq \I_{\rho (d)}$ for an ideal $\I_{\rho}$ of $\h$ or
$\diam$ type). This is then used to prove the main result of the
paper, Theorem~\ref{bijekcja}, which shows that all the above
ideals $\I_{\rho (d)}$ are actually minimal prime ideals of $K[M]$
and establishes a bijection between the set of leaves of $D$ and
the set of minimal prime ideals of $K[M]$.

\begin{tw}
  \label{idpierzawlisc}
Every prime ideal of $K[M]$ contains a prime ideal of the
form $\I_{\rho (d)}$, where $\rho (d)$ is the congruence
corresponding to a leaf $d$ of $D$.
\end{tw}

\begin{proof}
Let $P$ be a fixed minimal prime ideal of $K[M]$. By
Theorem~\ref{idealy}, $P$ contains an ideal of the form
$\mc{I}_{\rho}$, where $\rho = \rho (e)$ is a congruence
corresponding to a diagram $e$ in the first level of the tree $D$,
so it is of one of the following types:

$\h$) $\rho$ corresponds to a diagram $\ \diag{ \circ & \ldots &
\circ & \bullet & \circ & \ldots & \circ }\ $ where the indicated
dot is neither $a_{1}$ nor $a_{n}$,

$\diam$) $\rho$ corresponds to a diagram $\ \diag{ \circ & \ldots
& \circ & \bullet \ar@/^0.4pc/@{-}[r] & \bullet & \circ & \ldots &
\circ } \ $.

 \noindent Moreover, in cases $\h$ and $\diam$
respectively, the homomorphisms from Lemmas~\ref{Mkier} and
\ref{Mkaro} can be extended to the corresponding semigroup
algebras. This leads to homomorphisms
\[ \xymatrix{
K[M] \ar[r] & K[M_{\rho}] \ar@{->>}[d] \\ & K[M]/P }
\hspace{-1ex}\simeq K[\ol{M_{n-1}^s} \times \la \ol{a_s} \ra]
\simeq K[M_{n-1}^s/\rho][\N]\]
\[\hspace{-3cm} \xymatrix{
K[M] \ar[r] & K[M_{\rho}] \ar@{->>}[d] \ar@{^{(}->}[r] &
K[M_{\rho} \, \la(\ol{a_sa_{s-1}})^{-1}\ra]
    \simeq \mbox{\makebox[10em][l]{$K[\ol{M_{n-2}^{s-1,s}} \times B \times \Z]
    \simeq K[M_{n-2}^{s-1,s}/ \rho][B \times \Z]$}}
  \\
{} & K[M]/P } \] in cases $\h$ and $\diam$, respectively, where
the embedding is accomplished via the central localization with
respect to $\la \ol{a_s}\ol{a_{s-1}}\ra$.

Consider all chains of congruences $\rho_0 \varsubsetneq \rho_1
\varsubsetneq \rho_2 \varsubsetneq \ldots \varsubsetneq \rho_j$ on
$M$, corresponding to a fragment of a branch of the tree $D$, such
that $\mc{I}_{\rho_j} \ssq P$. By Lemma~\ref{diagr-przem} we then
have natural homomorphisms
\[ \xymatrix{
K[M] \ar[r] & K[M/\rho_j] \ar@{->>}[d] \ar@{^{(}->}[r]  &  K[M_{i_j} / \rho_j][S_j] \\
& K[M]/P} \] where $S_j$ and $i_j$ are defined as in \ref{St}.

In the set of all such chains we choose a chain for which $i_j$ is
minimal. We will show that $\rho_j$ is the congruence
corresponding to a leaf $d$ of $D$; in other words
$\rho_{j}=\rho (d)$. Suppose otherwise. Then $i_j
>0$ and $M_{i_j}/\rho_j$ is not a free abelian monoid of rank
$i_j$, because none of the generators $a_{1},a_{n}$ have been
used. More precisely,
\begin{multline}
\label{otimes} M_{i_j}/\rho_j =
M^{u,v}_{i_j}/(\rho_j|_{M^{u,v}_{i_j}}) =
\la a_1, \ldots, a_{u-1}, a_{v+1}, \ldots, a_n \colon \\
\underbrace{a_1, \ldots, a_{u-1}}_{\textup{commute}},
\underbrace{a_{v+1}, \ldots, a_n}_{\textup{commute}} \textup{, and
the relations of a Chinese monoid hold}\ra.
\end{multline}

Consider an equality $\alpha_{1} K[M_{i_j}] \beta_1 = 0$ of type
$\boxplus$ (see Notation~\ref{boxplus}), where $\alpha_{1},
\beta_1 \in K[M_{i_j}]$. Then, in $K[M_{i_j}/\rho_j][S_j]$ we get
$\wh{j}{\alpha}_1 K[M_{i_j} \big/ \rho_j][S_j] \wh{j}{\beta}_1 =
0$, where $\wh{j}{x}$ denotes the image of $x$ in
$K[M_{i_j}/\rho_j][S_j]$. Notice that $K[M/\rho_j]$ embeds into
$K[M_{i_j}/\rho_j][S_j]$, because $M/\rho_j \hookrightarrow
M_{i_j} / \rho_j \times S_j$. We can identify $M/\rho_j$ with its
image under this embedding. Let
\begin{align*}
\mc{I}_{\alpha_{1}} &= (K[M_{i_j} / \rho_j]\, \wh{j}\alpha_{1}\, K[M_{i_j} / \rho_j])[S_j] \cap K[M/\rho_j], \\
\mc{I}_{\beta_1} &= (K[M_{i_j} / \rho_j]\, \wh{j}\beta_1\,
K[M_{i_j} / \rho_j])[S_j] \cap K[M/\rho_j].
\end{align*}
Then  \[\mc{I}_{\alpha_{1}} \cdot \mc{I}_{\beta_1} \ssq
\left(K[M_{i_j} / \rho_j]\, \wh{j}\alpha_{1}\, K[M_{i_j} / \rho_j]
\, \wh{j}\beta_1\, K[M_{i_j} / \rho_j]\right) [S_j] \cap
K[M/\rho_j]= 0.\] Moreover $\mc{I}_{\alpha_{1}}, \mc{I}_{\beta_1}
\lhd K[M/\rho_j]$, because $\mc{I}_{\alpha_{1}}, \mc{I}_{\beta_1}
\lhd K[M_{i_j}/\rho_j][S_j]$.

Let $\wh{j}{P}$ be the image of $P$ in $K[M/\rho_j]$. Since
$\I_{\rho_j} \ssq P$, there exists a natural map $K[M/\rho_j] \rar
K[M]/P$ whose kernel is $\wh{j}{P}$. Moreover, $K[M/\rho_j] \big/
\wh{j}{P} \simeq K[M]/P$. In particular, $\wh{j}{P}$ is a prime
ideal in $K[M/\rho_j]$. So, for every pair of ideals
$\mc{I}_{\alpha_{1}}, \mc{I}_{\beta_1}$, since
$\mc{I}_{\alpha_{1}} \cdot \mc{I}_{\beta_1} = 0$, we get
$\mc{I}_{\alpha_{1}} \ssq \wh{j}{P}$ or $\mc{I}_{\beta_1} \ssq
\wh{j}{P}$. Let $\g_1=\alpha_{1}$ if $\mc{I}_{\alpha_{1}} \ssq
\wh{j}{P}$ and let $\g_1=\beta_1$ otherwise (then we must have
$\mc{I}_{\beta_1} \ssq \wh{j}{P}$). Since $\mc{I}_{\g_1} \ssq
\wh{j}{P}$, there exists a natural homomorphism $K[M/\rho_j]/
\mc{I}_{\g_1}\rightarrow K[M]/P$.

Now, consider another pair $\alpha_{2}, \beta_2 \in K[M_{i_j}]$ of
type $\boxplus$. The equalities $\alpha_{2} K[M_{i_j}] \beta_2 =
0$ hold in $\left(\raisebox{0.5ex}{$K[M_{i_j}/\rho_j]$} \big/
\raisebox{-0.5ex}{$\wh{j}{\g}_1=0$}\right)[S_j]$. It follows that
\[\wh{j,1}{\alpha}_2 \, \left(\raisebox{0.5ex}{$K[M_{i_j}/\rho_j]$}
\big/ \raisebox{-0.5ex}{$\wh{j}{\g}_1=0$}\right)[S_j] \,
\wh{j,1}{\beta}_2=0,\] where $\wh{j,1}{x}$ denotes the image of
$x$ in $\left(\raisebox{0.5ex}{$K[M_{i_j}/\rho_j]$} \big/
\raisebox{-0.5ex}{$\wh{j}{\g}_1=0$}\right)[S_j]$. Let
$\wh{j,1}{P}$ be the image of $P$ in the algebra
$\raisebox{0.5ex}{$K[M_{i_j}/\rho_j]$} \big/
\raisebox{-0.5ex}{$\wh{j}{\g}_1=0$}$. We define
$\mc{I}_{\alpha_{2}}$ and $\mc{I}_{\beta_2}$ by
\begin{align*}
\mc{I}_{\alpha_{2}} &= \left(\left(\raisebox{0.5ex}{$K[M_{i_j}/\rho_j]$} \big/ \raisebox{-0.5ex}{$\wh{j}{\g}_1=0$}\right)\, \wh{j,1}\alpha_{2}\, \left(\raisebox{0.5ex}{$K[M_{i_j}/\rho_j]$} \big/ \raisebox{-0.5ex}{$\wh{j}{\g}_1=0$}\right)\right)[S_j] \cap \raisebox{0.5ex}{$K[M/\rho_j]$} \big/ \raisebox{-0.5ex}{$\mc{I}_{\g_1}$}, \\
\mc{I}_{\beta_2} &=
\left(\left(\raisebox{0.5ex}{$K[M_{i_j}/\rho_j]$} \big/
\raisebox{-0.5ex}{$\wh{j}{\g}_1=0$}\right)\, \wh{j,1}\beta_2\,
\left(\raisebox{0.5ex}{$K[M_{i_j}/\rho_j]$} \big/
\raisebox{-0.5ex}{$\wh{j}{\g}_1=0$}\right)\right)[S_j] \cap
\raisebox{0.5ex}{$K[M/\rho_j]$} \big/
\raisebox{-0.5ex}{$\mc{I}_{\g_1}$}.
\end{align*}
As above, we see that $\mc{I}_{\alpha_{2}}, \mc{I}_{\beta_2}$ are
ideals in $\raisebox{0.5ex}{$K[M/\rho_j]$} \big/
\raisebox{-0.5ex}{$\mc{I}_{\g_1}$}$, the ideal $\wh{j,1}{P}$ is
prime in $\raisebox{0.5ex}{$K[M_{i_j}/\rho_j]$} \big/
\raisebox{-0.5ex}{$\wh{j}{\g}_1=0$}$ and either
$\mc{I}_{\alpha_{2}} \ssq \wh{j,1}{P}$ or $\mc{I}_{\beta_2} \ssq
\wh{j,1}{P}$. Let $\g_2=\alpha_{2}$, if $\mc{I}_{\alpha_{2}} \ssq
\wh{j,1}{P}$ and let $\g_2=\beta_2$ otherwise (then we must have
$\mc{I}_{\beta_2} \ssq \wh{j,1}{P}$). Since $\mc{I}_{\g_2} \ssq
\wh{j,1}{P}$, there exists a homomorphism
$\raisebox{0.5ex}{\raisebox{0.5ex}{$K[M/\rho_j]$} \big/
\raisebox{-0.5ex}{$\mc{I}_{\g_1}$}} \Big/
\raisebox{-0.5ex}{$\mc{I}_{\g_2}$} \rightarrow K[M]/P$.

Similarly one shows that the image $\wh{j,2}{P}$ of $P$ in
$\raisebox{0.5ex}{\raisebox{0.5ex}{$K[M/\rho_j]$} \big/
\raisebox{-0.5ex}{$\mc{I}_{\g_1}$}} \Big/
\raisebox{-0.5ex}{$\mc{I}_{\g_2}$}$ is a prime ideal and
\[\raisebox{0.5ex}{\raisebox{0.5ex}{$K[M/\rho_j]$} \big/ \raisebox{-0.5ex}{$\mc{I}_{\g_1}$}}
\Big/ \raisebox{-0.5ex}{$\mc{I}_{\g_2}$}  \hookrightarrow
\left(\raisebox{0.5ex}{\raisebox{0.5ex}{$K[M_{i_j}/\rho_j]$} \big/
\raisebox{-0.5ex}{$\wh{j}{\g}_1=0$}} \Big/
\raisebox{-0.5ex}{$\wh{j,1}{\g}_2=0$}\right)[S_j].\]

By the hypothesis, the above construction yields
\[ \xymatrix{
P \ar@{}[r]|(0.3){\mbox{\normalsize{$\lhd$}}} \ar[d] & K[M] \ar[d] \\
\wh{j}{P} \ar@{}[r]|(0.3){\mbox{\normalsize{$\lhd$}}} \ar[d] & K[M/\rho_j] \ar[d] \ar@{^{(}->}[r] & K[M_{i_j} / \rho_j][S_j] \ar[d]\\
\wh{j,1}{P} \ar@{}[r]|(0.3){\mbox{\normalsize{$\lhd$}}} \ar[d] & \raisebox{0.5ex}{$K[M/\rho_j]$} \big/ \raisebox{-0.5ex}{$\mc{I}_{\g_1}$} \ar[d] \ar@{^{(}->}[r] & \left(\raisebox{0.5ex}{$K[M_{i_j}/\rho_j]$} \big/ \raisebox{-0.5ex}{$\wh{j}{\g}_1=0$}\right)[S_j] \ar[d]\\
\wh{j,2}{P} \ar@{}[r]|(0.3){\mbox{\normalsize{$\lhd$}}} &
\raisebox{0.5ex}{\raisebox{0.5ex}{$K[M/\rho_j]$} \big/
\raisebox{-0.5ex}{$\mc{I}_{\g_1}$}} \Big/
\raisebox{-0.5ex}{$\mc{I}_{\g_2}$} \ar[d] \ar@{^{(}->}[r] &
\left(\raisebox{0.5ex}{\raisebox{0.5ex}{$K[M_{i_j}/\rho_j]$} \big/ \raisebox{-0.5ex}{$\wh{j}{\g}_1=0$}} \Big/ \raisebox{-0.5ex}{$\wh{j,1}{\g}_2=0$}\right)[S_j] \\
& K[M]/P} \] where the ideals in the first column are prime and
the kernels of the three homomorphisms from $K[M]$ to the
subsequent three algebras in the second column are contained in
$P$, because $\mc{I}_{\rho_j} \ssq P$, and also $\mc{I}_{\g_1}
\ssq \wh{j}{P}$ and $\mc{I}_{\g_2} \ssq \wh{j,1}{P}$.

Let $\wh{j,2}{x}$ denote the image of $x$ in
$\left(\raisebox{0.5ex}{\raisebox{0.5ex}{$K[M_{i_j}/\rho_j]$}
\big/ \raisebox{-0.5ex}{$\wh{j}{\g}_1=0$}} \Big/
\raisebox{-0.5ex}{$\wh{j,1}{\g}_2=0$}\right)[S_j]$. Similarly, we
define also $\wh{j,m}{x}$ for $m \geq 3$, using other pairs of
elements $\alpha, \beta \in K[M_{i_j}]$ of type $\boxplus$. Let
$\wh{j,0}{x}$ denote $\wh{j}{x}$.

Notice that each of the elements $\alpha,\beta \in K[M_{i_j}]$ of
type $\boxplus$  is a difference of two elements of $M_{i_j}$, see
Theorem~\ref{rown}. Hence, all considered elements $\g$ are also
of this type. Put $\g_k=l_k-p_k$, where $l_k, p_k \in M_{i_j}$.
Then it is clear that
\[\left(\raisebox{0.5ex}{$K[M_{i_j}/\rho_j]$} \big/ \raisebox{-0.5ex}{$\wh{j}{\g}_1=0$}\right)[S_j] =
K\left[\raisebox{0.5ex}{$M_{i_j}/\rho_j$} \big/
\raisebox{-0.5ex}{$\wh{j}{l}_1=\wh{j}{p}_1$}\right][S_j].\] We
also get
\[\left(\raisebox{0.5ex}{\raisebox{0.5ex}{$K[M_{i_j}/\rho_j]$}
\big/ \raisebox{-0.5ex}{$\wh{j}{\g}_1=0$}} \Big/
\raisebox{-0.5ex}{$\wh{j,1}{\g}_2=0$}\right)=
\raisebox{0.5ex}{$K[M_{i_j}/\rho_j]$} \big/
\raisebox{-1ex}{$(\wh{j}{\g}_1, \wh{j}{\g}_2)$} =
\raisebox{0.5ex}{$K[M_{i_j}/\rho_j]$} \big/
\raisebox{-1ex}{$\begin{cases}\substack{\wh{j}{\g}_1=0 \\
\wh{j}{\g}_2=0}\end{cases}$},\] which, as above, leads to
\[\left(\raisebox{0.5ex}{\raisebox{0.5ex}{$K[M_{i_j}/\rho_j]$} \big/ \raisebox{-0.5ex}{$\wh{j}{\g}_1=0$}} \Big/ \raisebox{-0.5ex}{$\wh{j,1}{\g}_2=0$}\right)[S_j]=
K\left[\raisebox{0.5ex}{\raisebox{0.5ex}{$M_{i_j}/\rho_j$} \big/
\raisebox{-1ex}{$\begin{cases}\substack{\wh{j}{l}_1=\wh{j}{p}_1 \\
\wh{j}{l}_2=\wh{j}{p}_2} \end{cases}$}}\right][S_j].\] Proceeding
in this way, until all $t$ pairs $\alpha, \beta \in K[M_{i_j}]$ of
type $\boxplus$ are used, we extend the above diagram by adding
more rows. As above, we get the following form of the last two
rows of this diagram:
\[ \xymatrix{ \wh{j,t}{P} \ar@{}[r]|(0.2){\mbox{\normalsize{$\lhd$}}} &
\raisebox{0.5ex}{\raisebox{0.5ex}{\raisebox{0.5ex}{\raisebox{0.5ex}{$K[M/\rho_j]$}
\big/ \raisebox{-0.5ex}{$\mc{I}_{\g_1}$}} \big/
\raisebox{-0.5ex}{$\mc{I}_{\g_2}$}} \big/
\raisebox{-0.5ex}{$\ldots$}}  \big/
\raisebox{-0.5ex}{$\mc{I}_{\g_t}$}  \ar[d] \ar[d] \ar@{^{(}->}[r]
& K\left[\raisebox{1.5ex}{\raisebox{0.5ex}{$M_{i_j}/\rho_j$} \Big/
\raisebox{-2ex}{$\begin{cases}\substack{\wh{j}{l}_1=\wh{j}{p}_1 \\ \wh{j}{l}_2=\wh{j}{p}_2 \\
\, \vdots \\ \wh{j}{l}_t = \wh{j}{p}_t} \end{cases}$}}\right][S_j] \\
& K[M]/P} \]

Let $\eta$ be the congruence on $M_{i_j}$ generated by the set
$\{({l}_1, {p}_1) ,({l}_2, {p}_2), \ldots , ({l}_t, {p}_t)\}$.
Then
\begin{multline*}
K\left[\raisebox{1.5ex}{\raisebox{0.5ex}{$M_{i_j}/\rho_j$} \Big/
\raisebox{-2ex}{$\begin{cases}\substack{\wh{j}{l}_1=\wh{j}{p}_1 \\
\wh{j}{l}_2=\wh{j}{p}_2 \\ \, \vdots \\ \wh{j}{l}_t = \wh{j}{p}_t}
\end{cases}$}}\right][S_j] =
\left(\raisebox{0.5ex}{\raisebox{0.5ex}{$K[M_{i_j}]$} \big/ \raisebox{-0.5ex}{$\I_{\rho_j}$}} \Big/ \raisebox{-0.5ex}{$(\wh{j}{\g}_1, \wh{j}{\g}_2, \ldots, \wh{j}{\g}_t$)}\right) [S_j] = \\
= \left(K[M_{i_j}] \big/ \left(\I_{\rho_j} \cup (\g_1, \g_2,
\ldots, \g_t)\right)\right) [S_j] = K[M_{i_j}/(\rho_j \vee
\eta)][S_j],
\end{multline*}
where $\lambda_1 \vee \lambda_2$ denotes the congruence generated
by $\lambda_1$ and $\lambda_2$.

The congruence $\eta$ is defined by a set containing one element
from each pair $(\alpha,\beta)$ of type $\boxplus$ for
$K[M_{i_j}]$, so by Theorem~\ref{idealy} it contains a congruence
$\eta_0$ of type $\h_{i_j}$ or $\diam_{i_j}$ on $M_{i_j}$.
Therefore, $\eta_0 \nsubseteq \rho_j|_{M_{i_j}}$, and so
$\rho_j|_{M_{i_j}} \varsubsetneq \rho_j|_{M_{i_j}} \vee \eta_0$
(see the description of $M_{i_j}/\rho_j$ in (\ref{otimes})).

We know that $M_{i_j}/\rho_j \times S_j = M^{u, v}_{i_j}/\rho_j
\times S_j$, so the generators $a_u, \ldots, a_v$ have been used,
for some $1 < u \leq v < n$. Let $\omega$ be the kernel of the map
$M \rar \left(M_{i_j}/(\rho_j \vee \eta_0)\right) \times S_j$. The
above construction implies that $\omega$ satisfies $\I_\omega \ssq
P$.

Let $\rho_{j-1}$ be the congruence corresponding to the diagram
$d_{j-1}$ of level $j-1$ in the tree $D$, which is connected to
the diagram $d_{j}$ corresponding to $\rho_j$. We will show that
one of the following cases holds.

(A) There exists a congruence $\rho_{j+1}$ on $M$ such that
$\rho_j \varsubsetneq \rho_{j+1} \ssq \omega$, and $\rho_{j+1}$
corresponds to a diagram $d_{j+1}$ in $D$, which is connected to
the diagram $d_j$. In this case, since $\I_{\omega} \ssq P$, we
get $\I_{\rho_{j+1}} \ssq \I_\omega \ssq P$ and $i_{j+1} < i_j$.

For this, we will find a congruence $\chi$ on $M_{i_j}$, of type
$\h_{i_j}$ or $\diam_{i_j}$, such that $\rho_j|_{M_{i_j}}
\varsubsetneq \chi \ssq \rho_j|_{M_{i_j}} \vee \eta_0$ and the
congruence $\ddot{\chi}$ on $M$ which is the kernel of the natural
homomorphism $M \rar M_{i_j}/\rho_j \times S_j \rar M_{i_j}/\chi
\times S_j$ corresponds to a diagram in $D$, lying below the
diagram corresponding to $\rho_j$; then we will put $\rho_{j+1} =
\ddot{\chi}$.

(B) There exists a congruence $\rho'_j$ such that $\rho_{j-1}
\varsubsetneq \rho'_j$, where $i'_j<i_j$, and $i'_j$ is the number
of unused generators in $\rho'_j$. Moreover, the congruence
$\rho'_j$ corresponds to a diagram in $D$, which is connected to
the diagram $d_{j-1}$ and $\rho'_j \ssq \omega$. In this case,
since $\I_{\omega} \ssq P$, we get $\I_{\rho'_j} \ssq \I_\omega
\ssq P$.

Since both cases contradict the choice of $i_j$, this will
complete the proof of the fact that $\rho_j$ corresponds to a leaf
of $D$.

We know that $\eta_0$ is a congruence of type $\h$ or $\diam$ on
$M_{i_j} = M^{u,v}_{i_j}$. If $\eta_0$ corresponds to a diagram
(on $M_{i_j}$) which is an arc $a_{v+1}a_{u-1}$, then $\omega$
corresponds to a diagram in $D$ (on $M$) and we put
$\chi=\rho_j|_{M_{i_j}} \vee \eta_0$. Then $\omega = \ddot{\chi}$
and we define $\rho_{j+1} = \ddot{\chi}$. We thus get case (A).

We consider the remaining possibilities.

If $\eta_0$ corresponds to a diagram (on $M_{i_j}$) which is an
arc $a_sa_{s-1}$ for some \mbox{$s>v+1$}, then $a_{v+1}$ becomes
central in the monoid $M_{i_j}/(\rho_j \vee \eta_0)$ (it commutes
with $a_{j}$ for $j<v+1$ because $s>v+1$ and commutes with all
$a_{j}$ for $j>v+1$ because of the congruence $\rho_j$). A
symmetric argument shows that, if $s < u$, then $a_{u-1}$ becomes
central.

Similarly, if $\eta_0$ corresponds to a diagram (on $M_{i_j}$)
which is a dot $a_s$ for some $s> v$, then the elementu $a_{v+1}$
becomes central in $M_{i_j}/(\rho_j \vee \eta_0)$. A symmetric
argument shows that, if $s < u$, then $a_{u-1}$ becomes central.

Therefore, in both considered cases, the congruence
$\rho_j|_{M_{i_j}} \vee \eta_0$ induces centrality of $a_{u-1}$ or
of $a_{v+1}$ in the image of $M_{i_j}$, so $\omega$ is a
congruence corresponding to the diagram containing a dot
neighboring the previously used generators. If these used
generators are covered with an arc, then the new diagram obtained
by adding the dot neighboring this arc is allowed by
Definition~\ref{drzewo}, so it is an element of the tree defined
for $M_{i_j}$. Let $\chi$ denote the congruence on $M_{i_j}$
corresponding to this new diagram; so $\chi \ssq \rho_j|_{M_{i_j}}
\vee \eta_0$. The second case is when the new dot is on the same
side as some recently added dot. Then it is also easy to see that
we get a congruence $\chi \ssq \rho_j|_{M_{i_j}} \vee \eta_0$ that
corresponds to a diagram on $M_{i_j}$.

In both cases we may thus define $\rho_{j+1} = \ddot{\chi}
\varsupsetneq \rho_j$ and conditions in case (A) are satisfied, in
particular $i_{j+1} < i_j$.

It remains to consider the case where the diagram $d_{j}$
corresponding to $\rho_j$ contains dots on one side and the
considered ,,new dot'' (coming from $\eta_0$) is on the other
side. In this case, we construct a congruence $\rho'_j$ that
satisfies conditions in (B).

Assume that the last step in the construction of $d_{j}$ was the
dot $a_u$, while the new dot is the dot $a_{v+1}$. Then, by
Lemma~\ref{Mkier}, we get $M_{i_{j-1}}/\rho_{j-1} \simeq
M_{i_j}/\rho_j \times \la a_u \ra$. This corresponds to replacing
$M_{i_{j-1}}/\rho_{j-1} \times S_{j-1}$ by $M_{i_j}/\rho_j \times
\la a_u \ra \times S_{j-1}$ (in the process of constructing
$\rho_j$ from $\rho_{j-1}$; see Construction~\ref{interpr}).

Let $e$ be the diagram in $D$ obtained from $d_{j-1}$ by adding
the arc $a_{v+1}a_u$. We will show that the congruence $\rho'_j$,
corresponding to $e$, is contained in $\omega$.

Let $M'$ be the image of $M_{i_j}/\rho_j \times \la a_u \ra$
obtained by making the generator $a_{v+1}$ central in the first
component, in other words
\[M'= \left(\raisebox{0.5ex}{$M_{i_j}/\rho_j$}
\big/ \raisebox{-0.5ex}{($a_{v+1}$ {\scriptsize{central}})}\right)
\times \la a_u \ra.\] We have to check that the following
relations hold in $M'$:

-- the image of $a_{v+1}a_u$ is central,

-- the images of $a_wa_{v+1}a_z$ and $a_za_{v+1}a_w$ are equal for
$w,z<u$,

-- the images of $a_wa_ua_z$ and $a_za_ua_w$ are equal for
$w,z>v+1$.

\noindent These are the relations that are imposed on
$M_{i_j}/\rho_{j-1}$ in the process of constructing $\rho'_j$ from
$\rho_{j-1}$ by adding the arc $a_{v+1}a_u$ (see the definition of
an ideal of type $\diam$ in Definition~\ref{types-def} and
Construction~\ref{interpr}).

The image of $a_{v+1}a_u$ in $M_{i_j}/\rho_j \times \la a_u \ra$
is equal to $(\wh{j}{a}_{v+1}, a_u)$. In $M'$, the element
$a_{v+1}$ becomes central in the first component. Hence the image
of $a_{v+1}a_u$ is central in $M'$.

The image of $a_wa_{v+1}a_z$ in $M_{i_j}/\rho_j \times \la a_u
\ra$ is equal to $(\wh{j}{a}_w \wh{j}{a}_{v+1} \wh{j}{a}_z, 1)$,
while $(\wh{j}{a}_z \wh{j}{a}_{v+1} \wh{j}{a}_w, 1)$ is the image
of $a_za_{v+1}a_w$. Since $a_{v+1}$ is central in the first
component and the images of $a_z$ and $a_w$ commute for $w,z<u$,
we get $\wh{j}{a}_w \wh{j}{a}_{v+1} \wh{j}{a}_z = \wh{j}{a}_{v+1}
(\wh{j}{a}_w \wh{j}{a}_z) =
 \wh{j}{a}_{v+1} (\wh{j}{a}_z \wh{j}{a}_w) =  \wh{j}{a}_z \wh{j}{a}_{v+1}
 \wh{j}{a}_w$.
 So in $M'$ the images of
$a_wa_{v+1}a_z$ and $a_za_{v+1}a_w$ are equal.

Similarly, the image of $a_wa_ua_z$ in $M_{i_j}/\rho_j \times \la
a_u \ra$ is equal to $(\wh{j}{a}_w \wh{j}{a}_z, \wh{j}{a}_u)$, and
the image of $a_za_ua_w$ is equal to $(\wh{j}{a}_z \wh{j}{a}_w,
\wh{j}{a}_u)$. In $M_{i_j}/\rho_j \times \la a_u \ra$ we get
$(\wh{j}{a}_w \wh{j}{a}_z, \wh{j}{a}_u) = (\wh{j}{a}_z
\wh{j}{a}_w, \wh{j}{a}_u)$ for $w,z>v+1$, because $\wh{j}{a}_w
\wh{j}{a}_z=\wh{j}{a}_z \wh{j}{a}_w$ in $M_{i_j}/\rho_j$ for
$w,z>v+1$. Thus, also the images of $a_wa_ua_z$ and $a_za_ua_w$
are equal in $M'$.

Hence, all the relations corresponding to adding the arc
$a_{v+1}a_u$ are satisfied. It follows that the congruence
$\rho'_j$, corresponding to $e$ is contained in $\omega$. Since
the diagram $e$ has $i'_j = i_j-1$ unused generators, case (B)
holds.

This completes the proof of the fact that $\rho_j$ corresponds to
a leaf $d$ of $D$. In other words, $\rho_{j}=\rho (d)$. The
ideal $\I_{\rho (d)}$ is prime by Theorem~\ref{liscieidpier}. This
proves the assertion.
\end{proof}

We are now ready for the main result of the paper.

\begin{tw}
  \label{bijekcja}
There exists a bijection between the set of leaves of the tree $D$
and the set of minimal prime ideals of $K[M]$. Namely, if $d$ is a
leaf of $D$ and $\rho (d)$ is the congruence corresponding to $d$,
then $\I_{\rho (d)}$ is the minimal prime ideal assigned to $d$.
\end{tw}

\begin{proof}
Let $P$ be a minimal prime ideal of $K[M]$. By
Theorem~\ref{idpierzawlisc}, $P$ contains a prime ideal  of
the form $\I_{\rho}$, where $\rho=\rho (d)$ is the congruence
corresponding to a leaf $d$ of $D$. Therefore $\I_{\rho}=P$. Let
$f(P) = \rho$.

Let $e$ be a leaf of $D$ and let $\eta = \rho (e)$ be the
corresponding congruence on $M$. Then, by
Theorem~\ref{liscieidpier}, $\I_{\eta}$ is a prime ideal of
$K[M]$. Hence, there exists a minimal prime ideal $Q$ of $K[M]$
contained in $\I_{\eta}$. Then, again by
Theorem~\ref{idpierzawlisc}, $\I_{\eta'}\ssq Q$ for a congruence
$\eta'=\rho (e')$ corresponding to a leaf $e'$ of $D$.
 Then $\I_{\eta'} \ssq Q \ssq \I_{\eta}$, while by Lemma~\ref{osobnegal} we have
$\I_{\eta'} \ssq \I_{\eta}$ if and only if the vertices of $D$
corresponding to congruences $\eta$ and $\eta'$ are in the same
branch of $D$. Since $e,e'$ are leaves, we get $\eta=\eta'$. Then
$\I_{\eta'} = Q = \I_{\eta}$, so $\I_{\eta}$ is a minimal prime
ideal of $K[M]$. We define $g(\eta) = \I_{\eta}$.

Therefore
\[gf(P) = g(\rho) =
\I_{\rho} = P\] and
\[fg(\eta) = f (\I_{\eta}) = f (\I_{\rho (e)}) =\rho (e) =\eta .\]
It follows that $f$ and $g$ establish the desired bijection.
\end{proof}

If $P$ is a minimal prime ideal of $K[M]$ then the congruence
$\{(s,t)\in M\times M : s-t \in P\}$ is denoted by $\rho_P$. This
is a homogeneous congruence, because minimal prime ideals of a
$\Z$-graded ring are homogeneous, see for example \cite{grad}.
Clearly, if $P=\I_{\rho (d)}$ for a leaf $d$ of $D$ then
$\rho_{P}=\rho (d)$.

A careful analysis of the proof of Theorem~\ref{liscieidpier}
leads to the following description of the monoid $M_{P}=M/\rho_P$
for a minimal prime ideal $P=\I_{\rho (d)}$ of $K[M]$. Recall that
$\rho_{P}=\rho_r$ is the last of the congruences in the chain
$\rho_1 \varsubsetneq \rho_2 \varsubsetneq \ldots \varsubsetneq
\rho_r$, constructed for $d$ in \ref{interpr}.

\begin{wn}
\label{zanMr}
  For every minimal prime ideal $P$ of $K[M]$ there exists an embedding
  \[M/\rho_P \hrar \N^{c_P} \times (B \times \Z)^{d_P},\]
  where $c_P+2d_P=n$. Moreover,

($\h$) if  $\rho_1$ is of type $\h$, then
\[M_P \simeq T \times \la \wh{r}{a}_s \ra \simeq T \times \N,\]
where $K[T] \simeq K[M_{n-1}]/Q$ for some minimal prime ideal $Q$
of $K[M_{n-1}]$;

($\diam$) if $\rho_1$ is of type $\diam$, then
\[M_P \ssq M_P (\wh{r}{A}_j)^{-1} \simeq T \times \N^t \times B \times \Z,\]
where $1 \leq j \leq r$ and $K[T] \simeq K[M_{n-2-t}]/Q$ for some
$0 \leq t \leq n-2$ and a minimal prime ideal $Q$ in
$K[M_{n-2-t}]$. For $t=n-2$ we put $K[M_0]=K$, $Q=0$ and
$T=\{1\}$.

\end{wn}

\begin{proof}
  Using the notation of the proof of  Theorem~\ref{liscieidpier}, we know that $\rho_r=\rho_P$ and
  $(M/\rho_r)(\wh{r}{A}_r)^{-1} \simeq \N^* \times (B \times \Z)^*$. Hence there is an embedding
  \[M/\rho_P \hrar \N^{c_P} \times (B \times \Z)^{d_P}\]
for some positive integers $c_P, d_P$. From the algorithm used in
the process of building the latter direct product we know that a
factor $\N$ appears each time a single generator is used (as a
dot), while a factor $B \times \Z$ appears each time a pair of
generators is used (as an arc).  After the extreme arc is added to
a diagram, the submonoid generated by the unused generators is
free abelian. Hence $c_P + 2 d_P = n$.

We keep the notation used in Construction~\ref{interpr} and in
\ref{oznphi}. For $\rho_1$ of type $\h$ we have a commuting
diagram
\[
\xymatrix{ M \ar@{->>}[r]^{\psi_1 = \psi_\h}
\ar@{->>}[rd]_{\psi_r} & M/{\rho_1} \ar@{->>}[d]^{\varphi_i \circ
\ldots \circ \varphi_1}
\ar@{}[r]^{\lambda_1}|{\mbox{\normalsize{$\simeq$}}} &
\ol{M_{n-1}^s} \times \la \wh{r}{a}_s \ra \ar[d]^{\wh{r-1}{\kappa} \circ \ldots \circ \wh{1}{\kappa}} \ar@{}[r]|{\mbox{\normalsize{$\simeq$}}} & \ol{M_{n-1}^s} \times \N  \ar[d]^\mu \\
& M/{\rho_r} \simeq M_P  \ar@{^{(}->}[r]_{\lambda_r} &
M_{i_r}/{\rho_r} \times S_r  \ar@{^{(}->}[r] & (\N^* \times (B
\times \Z)^*) \times \N }
\]
where $\lambda_r$ is as in Lemma~\ref{diagr-przem} and the last
embedding is identity on $S_r$, while $\mu$ is a homomorphism that
makes the diagram commute.

From the construction we know that $\wh{r-1}{\kappa} \circ \ldots
\circ \wh{1}{\kappa}$ is identity on $\la \wh{r}{a}_s \ra \simeq
\N$. Hence, $\mu$ has the form $\theta \times id$, where $\theta$
acts on $\ol{M_{n-1}^s}$, and $id$ acts on $\N$. Let
\[T \nadrow{def} \theta(\ol{M_{n-1}^s}) \ssq \N^* \times (B \times
\Z)^*.\] Then $T$ is a homomorphic image of $M_{n-1}$ and $M_P
\simeq T \times \N$, where $\N$ is an isomorphic image of $\la
\wh{r}{a}_s \ra$.

Denote by $d$ the diagram corresponding to the ideal $P$ (in the
sense of Theorem~\ref{bijekcja}). In the second step of the
construction of $d$, the dot $a_s$, corresponding to $\rho_1$ of
type $\h$, must have been followed by the arc $a_{s+1}a_{s-1}$,
corresponding to $\rho_2$. We know that $M/\rho_1 \simeq
\ol{M_{n-1}^s} \times \N$ and the congruence $\rho_r = \rho_P$
corresponds to the homomorphism $\ol{M_{n-1}^s} \times \N
\nad{\mu}{\na} T \times \N$, so that $M/\rho_P = M_P \simeq T
\times \N$.

We remove the dot $a_s$ from the diagram $d$. Then we get a
diagram $d'$ in the tree built for the Chinese monoid on $n-1$
generators. Such $d'$ corresponds to a leaf of this new tree,
whence to a minimal prime ideal of $K[M_{n-1}]$. On the other
hand, $d'$ corresponds to the kernel of the homomorphism $M_{n-1}
\rar \ol{M_{n-1}^s} \rar T$, which is a consequence of the
construction of $\rho_1, \rho_2, \ldots, \rho_r$. So $T$ is a
homomorphic image of $M_{n-1}^s$. Let $Q$ be the kernel of the
epimorphism $K[M_{n-1}^s] \na K[T]$. Then $K[T] \simeq
K[M_{n-1}^s]/Q$. Since $d'$ corresponds to a minimal prime ideal
of $K[M_{n-1}]$, $Q$ is a minimal prime ideal. This completes the
proof in case $\rho_1$ is of type $\h$.

Assume now that $\rho_1$ is of type $\diam$. We consider two
cases.

(a) $r=1$, so that $\rho_1 = \rho_P$ corresponds to a diagram
\[
\diag{ \bul \ar@/^0.4pc/@{-}[r] & \bul & \circ & \ldots & \circ} \
\ \ \ \ \textup{or} \ \ \ \ \ \diag{\circ & \ldots & \circ & \bul
\ar@/^0.4pc/@{-}[r] & \bul} \] Then $\ol{M_{n-2}} \simeq
\N^{n-2}$, so Lemma~\ref{Mkaro} yields
\[M_P = M/\rho_P \hrar \ol{M_{n-2}} \times B \times \Z \simeq \N^{n-2} \times B \times \Z\]
and the assertion follows with $t=n-2$, $K[M_0]=K$, $Q=0$ and
$T=\{1\}$.

(b) $r >1$, so in the construction of the diagram $d$
corresponding to the ideal $P$, after an initial arc corresponding
to the congruence $\rho_1$, there were more steps leading to the
leaf $d$ of $D$. Recall that such a construction must finish with
an extreme arc (see Definition \ref{defl}). Hence, in $d$, after
the initial arc $a_sa_{s-1}$, a number $t \geq 0$ of dots have
been built, followed by another arc. Hence, for some $1< j +1 \leq
r$ the congruence $\rho_{j+1}$ corresponds (for some $t \geq 0$)
to the diagram
\[ \diag{
 \circ & \ldots & \circ & \bul \ar@/^1pc/@{-}[rrrrrr]
 & \podpis{\bul}{s-1} \ar@/^0.4pc/@{-}[r] & \podpis{\bul}{s} &
\podpis{\bul}{s+1} & \ldots & \podpis{\bul}{s+t} & \bul & \circ &
\ldots & \circ } \] or to an analogous diagram with $t$ dots on
the left of the arc $a_sa_{s-1}$. Then $\rho_j$ corresponds to the
diagram
\[ \diag{
 \circ & \ldots & \circ
 & \podpis{\bul}{s-1} \ar@/^0.4pc/@{-}[r] & \podpis{\bul}{s} &
\podpis{\bul}{s+1} & \ldots & \podpis{\bul}{s+t} & \circ & \ldots
& \circ } \] or to the analogous diagram with $t$ dots on the left
of the arc $a_sa_{s-1}$. Then the number of unused generators is
equal to $i_j = n-2-t$ and $S_{j}= \N^t \times B \times \Z$, while
Lemma~\ref{diagr-przem} yields a natural embedding
\[M/\rho_{j} \hrar M_{i_{j}}/\rho_{j} \times S_{j} = M_{i_{j}}/\rho_{j} \times \N^t \times B \times \Z.\]
Moreover, $S_r = Y \times S_{j}= Y \times \N^t \times (B \times
\Z)$, where $Y = \N^* \times (B \times \Z)^*$  and the
construction of $M/\rho_P$ yields natural homomorphisms
\[\hspace{-1cm}
\xymatrix{ M/{\rho_{j}} \ar@{->>}[d]_{\varphi_{r-1} \circ \ldots
\circ \varphi_{j}} \ar@{^{(}->}[r] & M_{i_{j}}/{\rho_{j}} \times
S_{j} \ar[d]^{\wh{r-1}{\kappa} \circ \ldots \circ \wh{j}{\kappa}}
\ar@{}[r]|(0.4){\mbox{\normalsize{$=$}}}
& M_{i_{j}}/{\rho_{j}} \times \N^t \times B \times \Z \ar[d] \\
M/{\rho_r} \simeq M_P  \ar@{^{(}->}[r] & M_{i_r}/{\rho_r} \times Y
\times S_j  \ar@{}[r]|(0.4){\mbox{\normalsize{$=$}}}
 & \hspace{-0.3cm} M_{i_r}/{\rho_r} \times Y \times \N^t \times B \times \Z
}\] where $\wh{r-1}{\kappa} \circ \ldots \circ \wh{j}{\kappa}$ is
identity on $S_j$, so it is of the form $\theta \times id$, with
$\theta \colon M_{i_{j}}/{\rho_{j}} \rar M_{i_r}/\rho_r \times Y$
and
 $id \colon S_j \rar S_j$. Let \[T \nadrow{def}
\theta(M_{i_{j}}/{\rho_{j}}),\] so $T \times S_j$ is the image of
$M_{i_{j}}/{\rho_{j}} \times S_{j}$ under $\wh{r-1}{\kappa} \circ
\ldots \circ \wh{j}{\kappa} = \theta \times id$.

By Proposition~\ref{centrlok}, $M_{i_j} / \rho_j \times S_j =
(M/\rho_j) (\wh{j}{A}_j)^{-1}$ (under an appropriate
identification). Consider the following diagram, similar to (\#\#)
used in Proposition~\ref{centrlok}:
\[ \xymatrix{
 & (M/\rho_j)(\wh{j}{A}_j)^{-1} \ar@{}|{\mbox{\normalsize{$=$}}}[r] \ar@{->>}_{\varphi'_{r-1} \circ \ldots \circ \varphi'_j}[d] &  M_{i_j} / \rho_j \times S_j \ar^{\wh{r-1}{\kappa} \circ \ldots \circ \wh{j}{\kappa}= \theta \times id}[d]\\
M_P(\wh{r}{A}_{j})^{-1} \ = \hspace{-4ex}&
(M/\rho_{r})(\wh{r}{A}_{j})^{-1} \ar@{^{(}->}^{\lambda''_{r}}[r] &
M_{i_{r}} / \rho_{r} \times Y \times S_j
\ar@{}|(0.55){\mbox{\normalsize{$=$}}}[r] & M_{i_r}/\rho_r \times
S_r } \tag{\#\#\#}\] where $\lambda''_{r}$ is the restriction of
$\lambda'_{r}$ to $M_P(\wh{r}{A}_{j})^{-1}$, and every
$\varphi'_k$, for  $k=j, \ldots, r-1$, is the natural extension of
$\varphi_k$ to the appropriate localization. Then $\varphi'_{r-1}
\circ \ldots \circ \varphi'_j$ maps $M/\rho_j$ onto $M/\rho_r$,
while $\wh{j}{A}_j$ is mapped onto $\wh{r}{A}_j$. Thus, this is an
epimorphism onto $(M/\rho_r)(\wh{r}{A}_j)^{-1}$.

We know that $T \times S_j \ssq M_{i_{r}} / \rho_{r} \times S_{r}$
is the image of $(M/\rho_j)(\wh{j}{A}_j)^{-1}  =
M_{i_{j}}/{\rho_{j}} \times S_{j}$ under $\wh{r-1}{\kappa} \circ
\ldots \circ \wh{j}{\kappa} = \theta \times id$. Since diagram
(\#\#\#) commutes, this image must be equal to
\[\lambda''_{r} \circ \varphi'_{r-1} \circ \ldots \circ \varphi'_j ((M/\rho_j)(\wh{j}{A}_j)^{-1}) =
\lambda''_{r} (M_P(\wh{r}{A}_{j})^{-1}) \simeq
M_P(\wh{r}{A}_{j})^{-1} \ssq M_{i_r}/\rho_r \times S_r.\]
Therefore
\[M_P \ssq M_P (\wh{r}{A}_j)^{-1} \simeq T \times S_j = T \times \N^t \times B \times \Z,\]
which proves the first part of the assertion in case $\rho_{1}$ is
of type $\diam$.

Removing from $d$ the dots $s-1, s, s+1, \ldots, s+t$ leads to a
diagram $d'$ in the tree constructed for the Chinese monoid
$M_{n-2-t}$. This diagram $d'$ starts with an arc
$a_{s+t+1}a_{s-2}$. Hence, as in the last part of the above proof
in case $\h$, the diagram $d'$ corresponds to the kernel of the
homomorphism $M_{n-2-t} \rar M_{n-2-t}/\rho_j \rar T$ and we get
$K[T] \simeq K[M_{n-2-t}]/Q$ for a minimal prime ideal $Q$ in
$M_{n-2-t}$. This completes the proof in case $\diam$, and hence
the proof of the proposition.
\end{proof}

\section{Applications}
\label{r-zast}

Our final goal is to derive certain important consequences of the
main result of Section~\ref{r-min-id-pier}. First, in Part
\ref{s-BJ}, we show that the prime radical of the Chinese algebra
$K[M]$ coincides with its Jacobson radical. Next, in Part
\ref{s-liczba}, we obtain a formula for the number of minimal
primes of $K[M]$. A surprising new representation of the monoid
$M$ as a submonoid of the direct product $B^d \times \Z^e$ for
some $d,e \geq 1$ is found in Part \ref{s-zanurz}. In particular,
the latter implies that $M$ satisfies a nontrivial identity.

\subsection{The prime radical and the Jacobson radical of $K[M]$ coincide}
\label{s-BJ}

\

Recall that $J(R),B(R)$ denote the Jacobson and the prime
radical of a ring $R$, respectively. We start with the following
result.

\begin{tw}
\label{B=Jwstep} If $P$ is a minimal prime ideal of the Chinese
algebra $K[M]$ then the algebra $K[M]/P$ is semiprimitive.
\end{tw}

\begin{proof}
Let $n$ be the rank of $M$. If $n=1$ then $K[M] = K[x]$. If $n=2$
then from \cite{jofc} we know that $K[M]$ is also prime and
semiprimitive. Thus, we may assume that $n\geq 3$. By induction,
we may also assume that the assertion is satisfied for all Chinese
algebras of rank less than $n$. We shall consider the two cases,
denoted by $\h$ and $\diam$, as in Corollary~\ref{zanMr}.

First, consider case $\h$. From Corollary \ref{zanMr} we know that
$K[M]/P\simeq K[M_P] \simeq K[T][x]$, where $K[T]$ is an algebra
of the form $K[M_{n-1}]/Q$ for some minimal prime ideal $Q \lhd
K[M_{n-1}]$. By the inductive hypothesis, we get
$J(K[M_{n-1}]/Q)=0$. Since $K[M_{n-1}]/Q \simeq K[T]$, this
implies that $J(K[M]/P)\simeq J(K[T][x])=0$, as desired.

Next, consider case $\diam$. Suppose that $J(K[M_P]) \neq 0$ and
choose some nonzero $a \in J(K[M_P])$. From Corollary \ref{zanMr}
we know that $M_P \hrar T \times \N^t \times B \times \Z$ for an
appropriate $T$. Hence $K[M_P]$ can be viewed as a $\Z$-graded
algebra (according to the last component of the above direct
product) or as an $\N$-graded algebra (for each of the $t$
components $\N$). Therefore, from Theorem~30.28 in \cite{Karp} we
know that $J(K[M_P])$ is homogeneous. Thus we may assume that $a$
is homogeneous with respect to each of the gradations coming from
components $\Z$ or $\N$. Let $a = \sum_{i=1}^k \lambda_i s_i$ for
some $k \geq 1$, $0 \neq \lambda_i \in K$, $s_i \in M_P$. Then all
$s_i$ coincide when restricted to each of these components. This
means that there exist elements  $m \in \N^t$, $z \in \Z$
(independent of $i$) such that $s_i = (t_i, m, b_i, z) \in T
\times \N^t \times B \times \Z$. Since $a \neq 0$, also
$\sum_{i=1}^k \lambda_i(t_i, b_i) \neq 0$.

Consider the natural projection
\[\Pi \colon T \times \N^t \times B \times \Z \rar T \times B\] and the induced
map of semigroup algebras. Clearly  $\Pi(a) = \sum_{i=1}^k
\lambda_i(t_i, b_i)\neq 0$.

We know that  $\Pi(M_P) \ssq \Pi(T \times \N^t \times B \times \Z)
= T \times B$. We will show that the opposite inclusion $\Pi(M_P)
\supseteq T \times B$ also holds.

The monoid $\Pi(M_P)$ contains $(1,p)$ and $(1,q)$, because under
the homomorphism $\psi_r \colon M \rar M_P \ssq T \times \N^t
\times B \times \Z$ we have $a_{s-1} \mapsto (1, 1, p,g)$, $a_s
\mapsto (1, 1, q,1)$. Therefore, for every $b \in B$ we have
$(1,b) \in \Pi(M_P)$.

From the proof of Corollary~\ref{zanMr} and from the commuting
diagrams used in this proof (in case $\diam$) it follows that the
following diagram commutes:
\[ \xymatrix{
M/\rho_j \ar@{->>}[d]_{\varphi_{r-1} \circ \ldots \circ
\varphi_{j}} \ar@{^{(}->}[rr]  &
&  M_{i_j} / \rho_j \times S_j \ar@{->>}^{\theta \times id}[d]\\
M_P \ar@{^{(}->}[r] & M_P(\wh{r}{A}_{j})^{-1}
\ar@{}|{\mbox{\normalsize{$\simeq$}}}[r]  & T \times S_j
\ar@{}|(0.3){\mbox{\normalsize{$=$}}}[r] & \hspace{-4ex} T \times
\N^t \times B \times \Z \ar@{->>}^(0.6){\Pi}[r] & T \times B } \]
The embedding in the first row, composed with the projection
$M_{i_j} / \rho_j \times S_j \rar M_{i_j}/\rho_j$, maps
$M/\rho_{j}$ onto $M_{i_j}/\rho_j$. By the definition
$T=\theta(M_{i_j} / \rho_j)$ and $\Pi$ is a projection, whence the
homomorphism
\[M/\rho_j \hrar M_{i_j}/\rho_j \times S_j \nad{\theta \times id}{\na} T
\times S_j \nad{\Pi}{\na} T \times B,\] composed with the
projection onto $T$, is a map onto $T$. Commutativity of the above
diagram implies now that also the  homomorphism in the second row
\[M_P \hrar T \times S_j \nad{\Pi}{\na} T \times B,\] composed with $T
\times B \rar T$, is a map onto $T$.

It follows that the image of $\Pi(M_P)$ under $T \times B \rar T$
coincides with $T$. Hence, for every $t \in T$ there exists $b
=p^iq^j \in B$ such that $(t,b) \in \Pi(M_P)$. Multiplying by
$(1,q^i) \in \Pi(M_P)$ on the left and by $(1,p^j)\in \Pi(M_P)$ on
the right, we get $(t,1) \in \Pi(M_P)$. This and the fact that
$(1,b) \in \Pi(M_P)$ for every $b \in B$ imply that $\Pi(M_P)
\supseteq T \times B$, as desired.

Therefore, $\Pi(M_P) = T \times B$, so that $\Pi|_{M_P}$ is
surjective and so its natural extension to $K[M_P]$ is also
surjective. Therefore we get $\Pi(J(K[M_P])) \ssq J(K[T \times
B])$. Since $0 \neq a \in J(K[M_P])$ and $\Pi(a) \neq 0$, this
implies that
\begin{equation}  \label{odot}
0 \neq \Pi(a) \in \Pi(J(K[M_P])) \ssq J(K[T \times B]).
\end{equation}
Moreover, $K[T \times B] \simeq K[T][B]$ and from \cite{jofc} we
know that $K[T][B]$ contains an ideal $\I \simeq
\mathcal{M}_\infty(K[T])$ such that $K[T][B]/\I \simeq
K[T][x,x^{-1}]$. Here $\mathcal{M}_\infty(K[T])$ stands for the
algebra of $\N\times \N$ matrices over $K[T]$ with finitely many
nonzero entries.

As in case $\h$, from the inductive hypothesis it follows in view
of Corollary \ref{zanMr} that $J(K[T])=0$. Hence, the above
implies that $J(K[T][B]/\I) \simeq J(K[T][x,x^{-1}])=0$. Moreover,
$J(K[T])=0$ yields
\[J(\I) \simeq J(\mathcal{M}_\infty(K[T])) \simeq \mathcal{M}_\infty(J(K[T])) =
 0.\]
Since $J(\I) = 0$ and $J(K[T][B]/\I)=0$, it follows that $ J(K[T
\times B]) \simeq J(K[T][B])=0$. This contradicts (\ref{odot}),
completing the proof in case $\diam$.
\end{proof}

As a direct consequence we get
\begin{wn}
\label{B=J} The prime radical of the Chinese algebra $K[M]$
is equal to its Jacobson radical.
\end{wn}

Notice that the properties of the algebra $K[M_3]$ are different
than those of the plactic algebra of rank $3$, which is not prime
but is semiprimitive, see \cite{jofc}. Namely, if $n \geq 3$ then
the Chinese algebra $K[M]$ of rank $n$ is not semiprime,
\cite{praca}.

\subsection{Number of minimal prime ideals of $K[M]$}
\label{s-liczba}

\

In order to get a formula for the number of minimal primes of
$K[M]$ we use the construction of the tree $D$ and the bijection
between the leaves of $D$ and the minimal primes in $K[M]$,
established in Theorem~\ref{bijekcja}.

The following analogue of the Fibonacci sequence will be crucial.

\begin{df}
\label{trib} \emph{The Tribonacci sequence} is the sequence
defined by the linear recurrence
\[\begin{cases}
T_0=T_1=T_2=1 \\
T_{n+1}=T_n+T_{n-1}+T_{n-2}   \mbox{ for } n\geq 2.
\end{cases}\]
\end{df}
The properties of this sequence are described in
\cite[A000213]{sloane}. Its initial elements are: $T_0=T_1=T_2=1$,
$T_3=3$, $T_4=5$, $T_5=9$, $T_6=17$, $T_7=31$, $T_8=57$,
$T_9=105$, $T_{10}=193$.

\begin{tw}
\label{liczbaid} Let $M$ be the Chinese monoid of rank $n$. Then
$T_{n}$ is the number of minimal prime ideals of the algebra
$K[M]$.
\end{tw}

\begin{proof}
Recall that, if the rank $n$ of the Chinese monoid $M$ is $1$ or
$2$, then the algebra $K[M]$ is prime. Hence, we may assume that
$n\geq 3$.

By Theorem~\ref{bijekcja}, it is enough to enumerate the leaves of
the tree $D$. From the construction of $D$ in
Definition~\ref{drzewo} we also know that a diagram $f$ is a leaf
of $D$ if and only if the last step in the construction of $f$ is
an arc containing one of the generators $a_1,a_n$, in other words
an extreme arc. Hence, we will count the number of such diagrams.

Let $k$ be the number of generators used in the construction of
$f$ before constructing the respective extreme arc (that is, the
number of generators under this arc). Let $U_k$ denote the number
of all possible configurations of $k$ generators under an arc in a
diagram. For $k=0$ we put $U_0 = 1$. If $k=1$, then $U_1 = 1$,
because the only possibility is a single dot under the arc. If
$k=2$, clearly there is also a single possibility, so that
$U_2=1$. For $k=3$ there are $3$ possibilities. For example, if
$a_1a_5$ is the given arc, then under this arc we can have: either
the dot $a_3$ and the arc $a_2a_4$, or the arc $a_3a_4$ and the
dot $a_2$, or the arc $a_2a_3$ and the dot $a_4$. Hence $U_3=3$.
Similarly, one can easily see that $U_4=5$.

In general, if $k\geq 3$ then there are two types of
configurations of exactly $k$ generators under an arc $A$. The
first type occurs when there is another arc $A'$ directly under
$A$. Then there are $k-2$ generators under $A'$, so the number of
such configurations is the same as for $k-2$, that is $U_{k-2}$.
The second type occurs when directly under $A$ there is a number
$i>0$ of consecutive dots (on one of the sides, right or left) and
another arc covering all other generators. In this case, the
interior arc covers $k-2-i$ generators, and the number of such
configurations is twice the number of configurations for $k-2-i$,
so $2U_{k-2-i}$.

The above implies that $U_0 = U_1 = U_2 = 1$ and $U_k = U_{k-2} +
2 \cdot \sum_{i=1}^{k-2} U_{k-2-i}$ for $k \geq 3$. Notice that
$\sum_{i=1}^{k-2} U_{k-2-i} = \sum_{i=0}^{k-3} U_i$, so that
\[U_k = U_{k-2} + 2 \cdot \sum_{i=0}^{k-3} U_i.\]
Therefore
\[U_{k+1} = U_{k-1} + 2 \cdot \sum_{i=0}^{k-2} U_i = U_{k-1} + 2 \cdot
\sum_{i=0}^{k-3} U_i + 2 U_{k-2}\] and subtracting one of these
equalities from the other one we get $U_{k+1} - U_k = U_{k-1} +
U_{k-2}$. So, for $k \geq 3$,
\[U_{k+1}=U_k+U_{k-1}+U_{k-2}.\]
Let $T'_n$ denote the number of all minimal prime ideals of
$K[M]$. Then we may assume $T'_0=T'_1= T'_2=1$ and from
Example~\ref{przykD} we know that $T'_3=3$ and $T'_4=5$. Recall
that $n \geq 3$. If the last step in the construction of a leaf of
$D$ is the arc $a_{1}a_{n}$ then there are $n-2$ generators under
this arc, hence there are $U_{n-2}$ leaves of this type. On the
other hand, if the extreme arc used in the construction of a leaf
contains only one of the generators $a_1, a_n$, then there are $k
\leq n-3$ generators under it, so the number of such leaves is
$2U_k$. Therefore, for $n \geq 3$ we get $T'_n = U_{n-2} + 2 \cdot
\sum_{k=0}^{n-3} U_k$. Notice that $T'_n=U_n$. The number of
minimal prime ideals of $K[M]$ is therefore given be the linear
recurrence
\[\begin{cases}
T'_0=T'_1=T'_2=1 \\
T'_{n+1}=T'_n+T'_{n-1}+T'_{n-2}.
\end{cases}\]
The assertion follows.
\end{proof}

\subsection{An embedding $M \hookrightarrow \N^c \times (B \times \Z)^d$}
\label{s-zanurz}

\

The construction of the monoids $M/\rho_P$, for all minimal prime
ideals $P$ of $K[M]$ and the associated congruences $\rho_P$,
allows us to find an entirely new faithful representation of $M$
as a submonoid of the direct product $\N^c \times (B \times
\Z)^d$, with $c+2d=nT_n$, where $T_n$ is the $n$-th element of the
Tribonacci sequence.

Let $\mc{P}_k$ be the set of all minimal prime ideals of the
Chinese algebra $K[M_k]$, for any $1 \leq k \leq n$. If $k=n$, we
will simply write $\mc{P}=\mc{P}_k$. By Theorem~\ref{liczbaid}, we
know that $|\mc{P}|=T_n$.

\begin{lem}
\label{kongtryw}
  $\bigcap_{P \in \mc{P}} \rho_P = \rho_0$,
  where $\rho_0$ stands for the trivial congruence on $M$.
\end{lem}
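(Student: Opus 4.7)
The plan is to prove the lemma by induction on the rank $n$ of $M$, equivalently showing that the natural map $M \to \prod_{P \in \mc{P}} M/\rho_P$ is injective. For $n = 1$ or $n = 2$ the algebra $K[M]$ is prime by \cite{jofc}, so $\mc{P} = \{0\}$, $\rho_{\{0\}} = \rho_0$, and the claim is immediate. I therefore assume $n \geq 3$ and take $(x, y) \in \bigcap_P \rho_P$.

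First, I observe that $K[M]$ is naturally $\Z^n$-multigraded by the generator-counts (the Chinese relations preserve the number of each $a_i$); since every minimal prime of a multigraded ring is multigraded (cf.\ \cite{grad}), each $\rho_P$ preserves multidegree. Because a nonzero element of $M$ is not nilpotent in $K[M]$ while $B(K[M]) = \bigcap_P P$ consists of nilpotent elements, the distinct multigraded components of $x - y$ cannot both lie in $B(K[M])$ unless they vanish, and it follows that $x$ and $y$ have identical counts $d_i$ of every generator $a_i$.

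To recover the rest of the canonical form, I will invoke Corollary~\ref{zanMr}, which realizes each $M_P$ as a submonoid of $\N^{c_P} \times (B \times \Z)^{d_P}$ via an explicit map on generators determined by the leaf $d \in D$ assigned to $P$. Two families of leaves will be used together. The extreme-arc leaves, arc $a_2 a_1$ and arc $a_n a_{n-1}$, give by Lemma~\ref{Mkaro} embeddings $M_P \hookrightarrow \N^{n-2} \times B \times \Z$ whose $B$-coordinate records the interleaving of $a_1$'s (respectively $a_{n-1}$'s) with the other generators of $M$. For each $s \in \{2, \ldots, n-1\}$, the leaves of $D$ starting with the dot $a_s$ give, by Corollary~\ref{zanMr}(h), $M_P \simeq T_P \times \N$ with $T_P \simeq M_{n-1}^s / Q_P$ for some minimal prime $Q_P$ of $K[M_{n-1}^s]$. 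The agreement of $x$ and $y$ in every $T_P$, combined with the inductive hypothesis applied to $M_{n-1}^s$ and with the ordering information from the $B$-coordinates of the extreme-arc leaves, is meant to force every exponent $k_{ji}$ in the canonical form of $x$ and $y$ to coincide, thereby yielding $x = y$.

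The hardest part will be the bookkeeping across these families. Leaves of $D_n$ starting with the dot $a_s$ correspond, upon deleting that initial dot, only to leaves of the tree $D_{n-1}$ for $M_{n-1}^s$ whose first step is the specific arc between the two generators adjacent to the removed $a_s$, rather than to arbitrary leaves of $D_{n-1}$; a single choice of $s$ thus does not expose all minimal primes of $K[M_{n-1}^s]$ to the inductive hypothesis. I will have to aggregate the information over all $s \in \{2, \ldots, n-1\}$ and over both extreme-arc leaves, and verify that the resulting system of constraints pins down the exponents $k_{ji}$ uniquely modulo the already-known multidegree, completing the inductive step.
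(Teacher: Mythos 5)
Your proposal correctly handles the base cases and the multigrading observation (that the total count of each generator $a_i$ must agree in $x$ and $y$), but it does not actually complete the argument, and the route you choose runs into a genuine obstruction that the paper circumvents by a different reduction.

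The paper's inductive step is not via the quotients $M/\rho_P$ or the sub-monoids $M_{n-1}^s$ at all. Instead it uses the family of \emph{generator-merging} surjections $f_k \colon M \na M_{n-1}$ defined by $a_k, a_{k+1} \mapsto a_k$ and $a_i \mapsto a_i$ otherwise. Each $f_k$ is onto a full Chinese monoid of rank $n-1$, so if $w-v \in B(K[M])$ then $f_k(w)-f_k(v) \in B(K[M_{n-1}])$ (image of a nil ideal under a surjection), and the inductive hypothesis applies to $M_{n-1}$ outright, giving $f_k(w)=f_k(v)$. Comparing canonical forms for $k=1$, $k=3$, and $k=n-1$ then pins down every exponent $k_{ji}$. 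Your plan, by contrast, deletes a generator to land in $M_{n-1}^s$, and, as you yourself note, the leaves of $D$ starting with the dot $a_s$ expose only those minimal primes of $K[M_{n-1}^s]$ whose diagram begins with the forced arc $a_{s+1}a_{s-1}$ — not all of them. That means you cannot invoke the inductive hypothesis on $M_{n-1}^s$ as stated; you would need a new, strictly finer statement (that the congruences from \emph{that restricted subfamily} of leaves, combined across all $s$ and with the extreme-arc data, already separate points), and you have not proven this. Since the supposed combination of constraints is precisely the hard content of the lemma, deferring it to unverified ``bookkeeping'' leaves the proof incomplete, and it is not clear the bookkeeping would close without essentially rediscovering a different reduction such as the $f_k$'s.
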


\begin{proof}
If $n=1$ then $K[M_1]=K[x]$, while for $n=2$ the algebra $K[M_2]$
is also prime by \cite{jofc}. Hence, we may assume that $n\geq 3$.

If $n=3$ then there are $3$ minimal primes in $K[M]$, say $P_1$,
$P_2$ and $P_3$, see Example~\ref{przykD} and
Theorem~\ref{bijekcja}, or \cite{jofc}. We prove that if two
elements $w, v \in M$ are such that $(w,v) \in \rho_{P_i}$ for
$i=1,2,3$, then $w=v$. Let $a=a_{1},b=a_{2},c=a_{3}$. Let $w=
(a)^{\alpha_{a}} (ba)^{\alpha_{ba}} (b)^{\alpha_{b}}
(ca)^{\alpha_{ca}} (cb)^{\alpha_{cb}} (c)^{\alpha_{c}}$ and $v=
(a)^{\beta_a} (ba)^{\beta_{ba}} (b)^{\beta_b} (ca)^{\beta_{ca}}
(cb)^{\beta_{cb}} (c)^{\beta_{c}}$ be the canonical forms of
$w,v$, respectively.

For simplicity, we write $(x), (xy)$ for any non-negative powers
of $x$ and $xy$, if $x,y\in \{a,b,c\}$. Let $\wt{u}$ denote the
image of $u \in M$ in $M/{\rho_{P_i}}$, for a fixed $i$.

We know that $\rho_{P_1}$ corresponds to imposing on $M$ the
additional relations $ab=ba$ and $acb=bca$. By the proof of
Lemma~\ref{asas-1reg}, the canonical form of the element $\wt{w}
\in M/{\rho_{P_1}}$ is
$(\wt{a})(\wt{b})(\wt{c}\wt{a})(\wt{c}\wt{b})(\wt{c})$, where the
exponent of $(\wt{b})$ or of $(\wt{c}\wt{a})$ is equal to $0$.
Clearly, all the  exponents are determined by those in the element
$w$, in particular the exponent of $(\wt{c})$ is equal to the
exponent of $(c)$ in $w$. Since $\wt{w} = \wt{v}$, all exponents
in the canonical forms of these two elements of $M/{\rho_{P_1}}$
are equal, so  in particular we get $\alpha_{c}=\beta_c$.

Similarly, the congruence $\rho_{P_2}$ corresponds to imposing
relations $bc=cb$ and $bac=cab$ on $M$. So
$(\wt{a})(\wt{b})(\wt{b}\wt{a})(\wt{c})(\wt{c}\wt{a})$ is the
canonical form of elements of $M/{\rho_{P_2}}$, with the exponent
of $(\wt{b}\wt{a})$ or of $(\wt{c})$ equal $0$ and the exponent of
$(\wt{a})$ equal to the exponent of $(a)$ in the original element
of $M$. This and the equality  $\wt{w} = \wt{v}$ imply that
$\alpha_{a}=\beta_a$.

The congruence $\rho_{P_3}$ corresponds to the relations $ab=ba$
and $bc=cb$ and it leads to the canonical form
$(\wt{a})(\wt{b})(\wt{c})(\wt{c}\wt{a})$ in $M/{\rho_{P_3}}$. For
$\wt{w}=\wt{v}$ this yields equalities of the corresponding
exponents of $(\wt{a}), (\wt{b}), (\wt{c})$ and $(\wt{c}\wt{a})$:
\[
\left\{
\begin{array}{r@{\;}ll}
  \alpha_{a}+\alpha_{ba} &= \beta_a+\beta_{ba} \\
  \alpha_{b}+\alpha_{ba}+\alpha_{cb} &= \beta_b+\beta_{ba}+\beta_{cb} \\
  \alpha_{c}+\alpha_{cb} &= \beta_c+\beta_{cb}\\
  \alpha_{ca} &= \beta_{ca}.
\end{array}
\right.
\]
These equalities, together with the earlier ones:
$\alpha_{c}=\beta_c$ and $\alpha_a=\beta_a$ easily imply that
every exponent in the canonical form of $w$ is equal to the
corresponding exponent in the form of $v$. Hence $w=v$, which
finishes the proof in case $n=3$.

Let $n \geq 4$. Proceeding by induction we assume that the
assertion is true for the monoid $M_{n-1}$. Let $w, v \in M$ and
let $\widetilde{w}, \widetilde{v}$ be their images under some
fixed epimorphism $M \na M_{n-1}$.

If $(w,v) \in  \bigcap_{P \in \mc{P}} \rho_P$, then $w-v \in
\bigcap_{P \in \mc{P}} P = B(K[M])$. Hence $\widetilde{w} -
\widetilde{v} \in \bigcap_{P \in \mc{P}_{n-1}} P = B(K[M_{n-1}])$.
This means that for every $P \in \mc{P}_{n-1}$ one has
$\widetilde{w} - \widetilde{v} \in P$, so that $(\widetilde{w},
\widetilde{v}) \in \rho_P$. By the induction hypothesis the latter
implies that $\widetilde{w} = \widetilde{v}$.

Using the canonical forms of elements of $M_{n-1}$, as in the case
$n=3$, from such equalities we get equalities of the corresponding
exponents. For simplicity, the $k$-th generator of $M$ and its
image will be denoted by $k$, for $k=1,2,\ldots,n$.

Consider the maps $f_{k}$, for $k=1,\ldots, n-1$, defined on the
generators of $M$ by:
\[
\begin{array}{lll}
k,k+1 \mapsto k  & \textup{ and } & i \mapsto i \textup{ for } i
\neq k, k+1.
\end{array}
\]
It is easy to see that every such map transforms the defining
relations of $M$ into the relations defining the Chinese monoid of
rank $n-1$ with generators $1,\ldots, k,k+2,\ldots ,n$. Hence,
every $f_{k}$ defines a surjective homomorphism $M \rar M_{n-1}$.
Notice that for every $w \in M$ and every fixed $k$, the image
$f_{k}(w)$ is of the form
\begin{align*}
\wt{w} = & (1)(21)(2)\ldots(k-1)\\
&(k1)(k2)\ldots(k)\\
&(k1)(k2)\ldots(kk)(k)\\
&(k+2 \ 1)\ldots(k+2 \ k)(k+2 \ k)\\
&\ldots \\
&(n1)(n2)\ldots(n \ k-1)(nk)(nk)(n\  k+2) \ldots (n).
\end{align*}
Since $(ki)$ and $(kj)$ commute for $i,j \leq k$, the latter leads
to
\begin{align*}
\wt{w} = & (1)(21)(2)\ldots(k-1)\\
&(k1)(k2)\ldots(k\ k-1)(k)\\
&(k+2 \ 1)\ldots(k+2 \ k)\\
&\ldots \\
&(n1)(n2)\ldots(n \ k-1)(nk)(n\  k+2) \ldots (n),
\end{align*}
with the (non-indicated) exponents depending on the exponents in
the canonical form of $w$. Moreover, the above is the canonical
form of $\wt{w}$ in the corresponding Chinese monoid of rank $n-1$
(see (\ref{canon})). Hence, from $\wt{w} = \wt{v}$ we derive the
following system of equalities
\[ \left\{
\begin{array}{r@{\;}ll}
  \alpha_{ij} &= \beta_{ij} &\textup{for $i<k$ and~every $j$}\\
  \alpha_{kj}+\alpha_{k+1 \ j} &= \beta_{kj}+\beta_{k+1 \ j} &\textup{for $j<k$}\\
  \alpha_{k}+2\alpha_{k+1\  k}+\alpha_{k+1} &= \beta_k+2\beta_{k+1\  k}+\beta_{k+1} \\
  \alpha_{ij} &= \beta_{ij} &\textup{for $i>k$ and $j \neq k,k+1$}\\
  \alpha_{ik} &= \beta_{ik} &\textup{for every $i$},
\end{array}
\right. \] where $\alpha$'s, $\beta$'s are the exponents in the
canonical form of $w,v$, respectively, and with the convention
that $\alpha_{k}=\alpha_{kk}$ and $\beta_k=\beta_{kk}$. The
homomorphism of the above type for $k=n-1$, with $n \geq 4$, leads
in particular to the following equalities
\[ \left\{
\begin{array}{r@{\;}ll}
\alpha_{ij} &= \beta_{ij} &\textup{for $i<n-1$ and every $j$}\\
\alpha_{n-1 \ 1}+\alpha_{n1} &= \beta_{n-1 \ 1}+\beta_{n1}\\
\alpha_{n-1 \ 2}+\alpha_{n2} &= \beta_{n-1 \ 2}+\beta_{n2}.
\end{array}
\right. \] On the other hand, for $k=1$ we get in particular
\[\left\{
\begin{array}{r@{\;}ll}
  \alpha_{n-1 \ j} &= \beta_{n-1\ j} &\textup{for $j \neq 1,2$}\\
  \alpha_{nj} &= \beta_{nj} &\textup{for $j \neq 1,2$},\\
\end{array}
\right. \] while for $k=3$, with $n \geq 4$, we get
\[\begin{cases}
\alpha_{n1} =  \beta_{n1}\\
\alpha_{n2} =  \beta_{n2}.
\end{cases}\]
It is easy to see that the above three systems of equalities lead
to the conclusion that all exponents in the canonical form of $w$
are equal to the corresponding exponents in $v$. Hence $w=v$,
which completes the proof.
\end{proof}

\begin{tw}
\label{zanurz-pi} There exists an embedding $M \hookrightarrow
\prod_{P \in \mc{P}} M/\rho_P$, where $\mc{P}$ denotes the
set of minimal prime ideals of the Chinese algebra $K[M]$.
\end{tw}

\begin{proof}
Let $m\in M$. Let $m_P$ denote the image of $m$ in $M/\rho_P$, for
$P\in \mc{P}$.  Then $m \mapsto (m_P)_{P\in \mc{P}}$ determines a
homomorphism, which is injective by Lemma~\ref{kongtryw}.
\end{proof}

\begin{wn}
\label{zanurz} There exists an embedding $M \hookrightarrow \N^c
\times (B \times \Z)^d$, where $c+2d=nT_n$.%
\end{wn}

\begin{proof}
From Corollary~\ref{zanMr} we know that for every $P \in \mc{P}$
there is an embedding $M/\rho_P \hookrightarrow \N^{c_P} \times (B
\times \Z)^{d_P}$ such that $c_P+2d_P=n$. In view of
Theorem~\ref{zanurz-pi} this yields an embedding
\[M \hookrightarrow \N^{c} \times (B \times \Z)^{d},\]
with $c+2d=n \cdot |\mc{P}| = nT_n$.
\end{proof}

It is well known that the bicyclic monoid $B$ satisfies the
identity $xy^2x xy xy^2x = xy^2x yx xy^2x$, \cite{adjan}. The
following surprising result is an immediate consequence.

\begin{wn}
\label{tozs}
  The Chinese monoid $M$ satisfies the identity
  \[xy^2x xy xy^2x = xy^2x yx xy^2x.\]
\end{wn}

\begin{wn}
\label{BnieIrho} The prime radical $B(K[M])$ is not of the form
$\I_\rho$ for any congruence $\rho$ on $M$.
\end{wn}

\begin{proof}
Suppose that $B(K[M]) = \I_\rho$ for a congruence $\rho$ on $M$.
Then $\rho=\rho_{B(K[M])} \ssq \rho_P$ for every prime ideal $P$
of $K[M]$.  Thus, $\rho \ssq \bigcap_{P \in \mc{P}} \rho_P$. From
Lemma~\ref{kongtryw} we know that $\bigcap_{P \in \mc{P}} \rho_P =
\rho_0$, where $\rho_0$ is the trivial congruence. Hence
$\rho=\rho_0$ and $B(K[M]) = \I_{\rho_0} = 0$. As recalled after
Corollary~\ref{B=J}, this contradicts \cite{praca}. The assertion
follows.
\end{proof}

\end{document}